\definecolor{shadecolor}{rgb}{1, 0.8, 0.3}
\title{
Chaotic motions in the restricted four body problem \\ via 
Devaney's saddle-focus homoclinic tangle theorem
}
\author[1]{Shane Kepley \thanks{S.K.
partially supported by NSF grant DMS-1700154 and by  the Alfred P. Sloan Foundation grant G-2016-7320.
Email: {\tt skepley@my.fau.edu}} }
\author[2]{J.D. Mireles James \thanks{J.D.M.J 
partially supported by NSF grant DMS-1700154 and by  the Alfred P. Sloan Foundation grant G-2016-7320.
Email: {\tt jmirelesjames@fau.edu}}}
\affil[1,2]{Florida Atlantic University, Department of Mathematical Sciences}
\date{\today}
\newcommand{\rr}{\mathbb{R}}
\newcommand{\cc}{\mathbb{C}}
\newcommand{\intrr}{\mathbb{IR}}
\newcommand{\overbar}[1]{\mkern 1.5mu\overline{\mkern-1.5mu#1\mkern-1.5mu}\mkern 1.5mu}
\newcommand{\norm}[1]{\left|\left| #1 \right|\right|}
\theoremstyle{remark}
\newtheorem{remark}{Remark}
\newtheorem{theorem}{Theorem}[section]
\newtheorem{lemma}[theorem]{Lemma}
\newtheorem{prop}[theorem]{Proposition}
\newtheorem{cor}[theorem]{Corollary}
\begin{document}

\maketitle

\begin{abstract}
We prove the existence of chaotic motions in a planar
restricted four body problem, establishing that the 
system is not integrable.  The idea of the proof
is to verify the hypotheses of a 
topological forcing theorem.  The forcing theorem 
applies to two freedom Hamiltonian systems where the stable and unstable 
manifolds of a saddle-focus equilibrium intersect transversally 
in the energy level set of the equilibrium. 
We develop a mathematically rigorous computer assisted
argument which verifies the hypotheses of the forcing theorem, and we
implement our approach for the restricted four body problem.   
The method is constructive and works far from any perturbative 
regime and for non-equal masses. 
Being constructive, our argument results in useful 
byproducts like information about the 
locations of transverse connecting orbits, 
quantitative information about the invariant manifolds,
and upper/lower bounds on transport times.    
\end{abstract}

\newpage

\tableofcontents

\newpage 

\section{Introduction}
Gravitational $N$-body problems occupy a central place in 
classical mathematical physics.  Studying  
their long time behavior raises subtle questions
about the interplay 
between regular and irregular motions and the boundary 
between integrable and chaotic dynamics.  Over the last 
hundred years
concepts from the qualitative theory of dynamical 
systems -- concepts like stable/unstable manifolds, homoclinic and 
heteroclinic tangles, KAM theory, and whiskered invariant tori --
have come to play an increasingly important
role in the discussion.  
In the last fifty years the study of numerical methods
for computing invariant objects has matured into a 
thriving sub-discipline.  This growth is
driven at least in part by the needs of the worlds space programs.
More recent work on validated numerical methods
has begun to unify the computational and 
analytical perspectives, enriching both aspects of the subject.

In this paper 
we prove that a certain gravitational four body problem
is not integrable by establishing the existence of chaotic motions.  
As a byproduct of our proof we obtain 
detailed information about the embedding of the stable and unstable 
manifolds, the existence of a number of 
transverse homoclinic connecting orbits, and  
accurate information about the location and other properties of these orbits.  
The problem we study is referred to as the planar circular restricted 
four body problem (or just the CRFBP), and is discussed in Section \ref{sec:intro}.

Our proof of the existence of chaotic motions in the 
CRFBP proceeds by verifying the hypotheses of the following 
topological forcing theorem, due to Devaney 
\cite{MR0442990}.  The theorem exploits the fact that, while the stable and unstable 
manifolds attached to an equilibrium solution cannot intersect 
transversally in the phase space, 
it is possible for them to intersect transversally when restricted to an energy manifold
of a Hamiltonian system.

\begin{theorem}[Theorem A of \cite{MR0442990} -- Hamiltonian saddle-focus homoclinic tangle theorem] \label{thm:devaney}
{\em
Suppose that $f \colon \mathbb{R}^4 \to \mathbb{R}^4$ 
is a two degree of freedom Hamiltonian vector field.  
Assume that $\mathbf{x}_0 \in \mathbb{R}^4$ is a saddle-focus equilibrium 
solution for $f$, and 
that $\gamma$ is a transverse homoclinic orbit for $\mathbf{x}_0$.  Then for any local transverse 
section $\Sigma$ to $\gamma$, and for any positive integer $N$, there is a compact, 
invariant, hyperbolic set $\Lambda_N \subset \Sigma$ on 
which the Poincar\'{e} map is topologically conjugate to the Bernoulli 
shift on $N$ symbols. 
}
\end{theorem}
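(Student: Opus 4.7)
The plan is to reduce the dynamics near $\gamma$ to symbolic dynamics by constructing a topological horseshoe in a Poincar\'e return map. Because $f$ is Hamiltonian, fix the energy level $\mathcal{E} = H^{-1}(H(\mathbf{x}_0))$, a smooth 3-manifold containing $\mathbf{x}_0$. Inside $\mathcal{E}$ the manifolds $W^s(\mathbf{x}_0)$ and $W^u(\mathbf{x}_0)$ are each two-dimensional, so transversality along $\gamma$ is a well-defined codimension-one condition in $\mathcal{E}$, and all subsequent work is carried out in the three-dimensional reduced system.

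First I would straighten the local dynamics. The saddle-focus assumption means $Df(\mathbf{x}_0)$ has a Hamiltonian quadruple of eigenvalues $\pm\alpha \pm i\beta$ with $\alpha,\beta > 0$. In a neighborhood $U$ of $\mathbf{x}_0$ on $\mathcal{E}$, an appropriate smooth change of coordinates (for instance Moser's Hamiltonian normal form, or at worst a sufficiently smooth partial linearization) puts the local flow into the cylindrical form
\[
\dot{r}_s = -\alpha r_s, \qquad \dot{r}_u = \alpha r_u, \qquad \dot{\theta} = \beta,
\]
with $W^s_{\mathrm{loc}} = \{r_u = 0\}$ and $W^u_{\mathrm{loc}} = \{r_s = 0\}$. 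Fix annular cross-sections $\Sigma^{\mathrm{in}} = \{r_s = \delta\}$ and $\Sigma^{\mathrm{out}} = \{r_u = \delta\}$. The local passage map $L \colon \Sigma^{\mathrm{in}} \to \Sigma^{\mathrm{out}}$ is then explicit: the flight time from a point with unstable radius $r_u$ is $t = \alpha^{-1}\log(\delta/r_u)$, during which the angle sweeps $\Delta\theta = (\beta/\alpha)\log(\delta/r_u)$, producing unbounded twisting as $r_u \to 0$.

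The transverse homoclinic $\gamma$ gives a global transit map $G \colon \Sigma^{\mathrm{out}} \to \Sigma^{\mathrm{in}}$ by flowing through a tubular neighborhood of $\gamma$; this $G$ is a local diffeomorphism at the crossings, and transversality of $W^s$ and $W^u$ along $\gamma$ is equivalent to $G$ mapping the unstable trace $\{r_u = 0\}$ in $\Sigma^{\mathrm{out}}$ to a curve that crosses the stable trace $\{r_s = 0\}$ in $\Sigma^{\mathrm{in}}$ transversally. Composing yields the return map $F = G \circ L$. Because $L$ carries radial arcs in $\Sigma^{\mathrm{in}}$ onto a spiral in $\Sigma^{\mathrm{out}}$ that accumulates on $W^u_{\mathrm{loc}}$, and $G$ is a transverse diffeomorphism, the set $F(\Sigma^{\mathrm{in}}) \cap \Sigma^{\mathrm{in}}$ consists of countably many horizontal strips that accumulate on $W^s_{\mathrm{loc}} \cap \Sigma^{\mathrm{in}}$.

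The core step is to extract from this picture, for each prescribed $N$, a collection of $N$ topological rectangles on which $F$ satisfies the Conley--Moser conditions. Choosing $N$ strips corresponding to consecutive full windings of the spiral yields rectangles whose $F$-images sit inside vertical strips of matching count. A cone-field argument in the linearizing coordinates, together with the stretching estimates from $L$ and bounded distortion estimates for $G$ on the compact homoclinic arc, then establishes the standard Conley--Moser hyperbolicity conditions. Their conclusion produces a compact invariant hyperbolic set $\Lambda_N$ on which $F$ is topologically conjugate to the shift on $N$ symbols, and the passage from $\Sigma^{\mathrm{in}}$ to an arbitrary local transverse section $\Sigma$ to $\gamma$ is by flow-box equivalence. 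The main obstacle is the analytic accounting in this last step: the derivative of $L$ blows up like $1/r_u$ near $W^s$, so the strips must be chosen close enough to $W^s$ that the spiral winds $N$ times, yet far enough that the hyperbolicity constants and distortion bounds through $G$ remain uniform. Balancing these competing requirements is the technical heart of the proof; once they are controlled, the symbolic dynamics follows from the standard horseshoe construction.
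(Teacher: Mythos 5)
The paper does not prove this theorem at all: it is quoted verbatim as Theorem A of Devaney's paper \cite{MR0442990} and used purely as a black-box forcing result, so there is no in-paper argument to compare yours against. Your outline reproduces the classical Devaney--Shilnikov strategy of the original proof: restrict to the energy level, compose a local passage map near the saddle-focus (logarithmic flight time, unbounded winding) with a global diffeomorphism along the homoclinic orbit, and extract, for each $N$, finitely many strips satisfying Conley--Moser conditions, so the approach is the right one.

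Two points deserve correction or extra care. First, $\mathcal{E} = H^{-1}(H(\mathbf{x}_0))$ is \emph{not} a smooth $3$-manifold at $\mathbf{x}_0$: since $f = J\nabla H$ and $f(\mathbf{x}_0)=0$, we have $\nabla H(\mathbf{x}_0)=0$, and near the equilibrium the level set is a cone (smooth only away from $\mathbf{x}_0$, where it locally decomposes into $W^s_{\mathrm{loc}}\cup W^u_{\mathrm{loc}}$ and the transit region carrying your cylindrical model). This is harmless for the construction, because the sections $\Sigma^{\mathrm{in}},\Sigma^{\mathrm{out}}$ and both transition maps live in the smooth part at distance $\delta$ from $\mathbf{x}_0$, but the reduction should be phrased that way rather than asserting smoothness at $\mathbf{x}_0$. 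Second, a genuinely smooth linearization is generally unavailable here: the eigenvalue quadruple $\pm\alpha\pm i\beta$ carries the Hamiltonian resonances $\lambda_j = \lambda_j + (\lambda_k + \lambda_l)$ coming from $\lambda_k+\lambda_l = 0$, which obstruct $C^\infty$ (indeed even moderately smooth) conjugacy to the linear flow. One must therefore invoke either Moser's convergent normal form in the analytic category or a $C^1$ (Hartman--Belitskii type) linearization, and check that $C^1$ regularity plus the explicit logarithmic twist estimates are enough for the cone-field and distortion bounds; your hedge toward a ``sufficiently smooth partial linearization'' points in the right direction, but this is exactly where the quantitative work sits and it should not be treated as routine.
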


\noindent Our main result is the following theorem.

\begin{theorem}[A CRFBP satisfying the hypotheses of Theorem \ref{thm:devaney}] \label{thm:ourThm1}
{\em
The planar circular restricted four body problem with mass parameters $m_1 = 0.5$, $m_2 = 0.3$, and 
$m_3 = 0.2$ satisfies the hypotheses of Theorem \ref{thm:devaney}. }
\end{theorem}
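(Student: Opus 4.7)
\section*{Proof Proposal}

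The plan is to verify the three hypotheses of Theorem \ref{thm:devaney} for the CRFBP vector field $f$ with the prescribed mass parameters: namely that $f$ is Hamiltonian on $\mathbb{R}^4$ (the easy structural check), that there exists an equilibrium $\mathbf{x}_0$ whose linearization is a saddle-focus, and that some homoclinic orbit $\gamma$ at $\mathbf{x}_0$ is transverse within the energy level $\{H=H(\mathbf{x}_0)\}$. The first step is to write down the rotating-frame Hamiltonian of the CRFBP explicitly and confirm directly that it defines a smooth two-degree-of-freedom Hamiltonian system away from the primaries. No analysis is needed here; it is purely a bookkeeping step.

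Second, I would locate the equilibria numerically, then validate one of them rigorously. Concretely, I would apply an interval Newton / Krawczyk operator to the algebraic system $f=0$ on a small box enclosing a numerical approximation, yielding a mathematically certified equilibrium $\mathbf{x}_0$. The eigenvalues of $Df(\mathbf{x}_0)$ are then enclosed via rigorous linear algebra (e.g., interval-arithmetic root enclosures of the characteristic polynomial), and I check that the spectrum has the form $\pm\alpha\pm i\beta$ with $\alpha,\beta>0$. For the particular mass ratios $(0.5,0.3,0.2)$, it is standard folklore that the CRFBP has several saddle-focus equilibria, so I would simply pick one with a convenient geometry for the next stage.

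The main work is establishing the existence of a transverse homoclinic orbit. My approach would use the parameterization method to construct rigorous high-order polynomial expansions of the two-dimensional local stable and local unstable manifolds $W^s_{\mathrm{loc}}(\mathbf{x}_0)$ and $W^u_{\mathrm{loc}}(\mathbf{x}_0)$, with $C^0$ (and $C^1$) interval enclosures for the truncation errors. Because the eigenvalues are complex, each local manifold is parameterized by a disk; its boundary circle provides a one-parameter family of initial conditions that I would propagate rigorously by a validated integrator (Taylor or Chebyshev) and a Poincar\'e section transverse to the expected connection. This produces two one-parameter arcs of states on the section, both lying in the energy level $\{H=H(\mathbf{x}_0)\}$. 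I would then pose the intersection of these arcs as a zero-finding problem $F(s,u)=0$ in two real unknowns (the parameters on the two boundary circles) and apply a Newton-Kantorovich / radii-polynomial argument on the computer to certify a true zero. Transversality of the intersection \emph{within the energy manifold} would be established by an interval enclosure of the $2\times 2$ Jacobian $DF$ at the zero, verifying that it is nonsingular; this is precisely the correct notion of transversality in Devaney's setting since the full four-dimensional manifolds automatically share the constraint $H=H(\mathbf{x}_0)$.

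The principal obstacle is the long-time rigorous integration of the manifold boundaries. The homoclinic excursion can be substantially longer than the hyperbolic timescale $1/\alpha$, and naive interval propagation causes the wrapping effect to blow up enclosures. To control this I would (i) take the parameterization radii as large as the radius-of-convergence estimates allow, to shrink the required integration time; (ii) break the trajectories into many short sub-arcs, representing intermediate states as short Taylor jets and re-centering after each sub-arc; and (iii) use domain decomposition on the boundary circles so that each sub-segment has a narrow enough enclosure to close the Newton argument on the section. Once these numerical tolerances are tight enough that the intersection and the nonsingularity of $DF$ can be certified in interval arithmetic, Theorem \ref{thm:devaney} applies and Theorem \ref{thm:ourThm1} follows.
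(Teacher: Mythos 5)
Your overall strategy---a validated equilibrium, interval enclosure of the saddle-focus spectrum, parameterization-method local manifolds with rigorous truncation bounds, rigorous propagation of their boundary circles, and a Newton--Kantorovich certification of an intersection whose nondegenerate Jacobian yields transversality---is the same as the paper's, which carries out these steps in Proposition \ref{prop:capEquil}, Proposition \ref{prop:saddleFocus}, Sections \ref{sec:parmMethod}--\ref{sec:results}, and Theorem \ref{prop:finiteDimNewton}. The genuine difference is the formulation of the connection and of transversality. You pass to a Poincar\'e section and solve a $2\times 2$ zero-finding problem for the intersection of the two arc traces inside the section; the paper introduces no section at all. It matches only the first three components of an unstable chart $\Gamma^u(s,t)$ against a stable chart restricted to its zero time section, $\Gamma^s(\sigma,0)$, giving a $3\times 3$ problem (Lemma \ref{lem:aPosEnergyLemma1}), and recovers equality of the fourth components from conservation of the Jacobi integral plus a sign check. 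Transversality in the energy level then follows from Lemma \ref{lem:aPosEnergyLemma2} using exactly two facts the machinery already delivers: nonsingularity of $DG$ at the zero, which is automatic from Theorem \ref{prop:finiteDimNewton}, and $\nabla E(\hat{\mathbf{x}})\neq 0$, a one-line interval computation. The paper's route avoids constructing and validating a return map (with its crossing-time analysis) and avoids choosing coordinates on the two-dimensional surface obtained by intersecting the section with the energy level; your route buys one fewer unknown in the Newton argument.

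Two points in your transversality step still need to be supplied before Theorem \ref{thm:devaney} applies. First, the relevant notion is transversality of the two-dimensional manifolds $W^u(\mathbf{x}_0)$ and $W^s(\mathbf{x}_0)$ inside the three-dimensional energy level; for that level set to be a manifold near the homoclinic point (and for your section-energy surface to carry coordinates in which $F$ maps into $\mathbb{R}^2$) you must verify $\nabla E \neq 0$ at the intersection point, which your proposal never checks. Second, nonsingularity of your $2\times 2$ Jacobian only gives transverse intersection of the two \emph{arcs} within the section-energy surface; to conclude transversality of the manifolds in the energy level you must add the flow direction, i.e.\ argue that the section is transverse to $f$ at the intersection point so that the tangent spaces of $W^u$ and $W^s$ together contain the two independent section directions and $f(\hat{\mathbf{x}})$, hence span the three-dimensional tangent space of the energy level. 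This is a standard argument (it is the section-map analogue of the flow-alignment computation in the proof of Lemma \ref{lem:aPosEnergyLemma2}), but as written you assert rather than prove it; with these two additions your variant closes.
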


\noindent The proof of the theorem is both constructive and non-perturbative, and 
the argument makes substantial use of the digital computer.  

By non-perturbative we refer to the fact that if $m_2, m_3 \ll 1$ then 
the CRFBP is a perturbation of the Kepler problem, while
if only $m_3 \ll 1$ then the CRFBP is  
a perturbation of the circular restricted three body problem. 
Previous studies -- of both the pen and paper and computer
assisted variety -- establish the existence of hyperbolic chaotic invariant sets in 
the three body problem. Such results provide information about the four 
body for small $m_3$ via perturbative considerations.  See the references discussed 
below for more complete discussion of perturbative arguments.  
In the non-perturbative regime on the other hand, there are to our knowledge
no previous mathematically rigorous proofs of chaos in the CRFBP. 
Moreover, while the results of the present work establish the existence of
chaotic motions only for the parameter values given in Theorem \ref{thm:ourThm1}, 
our methods work \textit{in principle} for any values of $m_1, m_2, m_3$ giving 
rise to a saddle focus equilibrium.  
 
Note also that  the much studied planar circular restricted three body problem
does not admit any saddle-focus equilibrium 
solutions: the equilibrium in that problem are typically of saddle $\times$ center 
or center $\times$ center stability type.
Then the dynamical mechanism for chaos considered 
in the present work is properly a four body phenomena.  
It seems that the Devaney theorem has not been exploited in 
any previous work on computer assisted proofs.

Both the strategy of the proof
and the organization of the paper are outlined in Section \ref{sec:ideaOfTheProof}, 
but the main tools are mathematically rigorous numerical 
methods for studying invariant manifolds of analytic differential equations.  
Its worth mentioning two novelties of the present work  
before beginning the more detailed
discussion of the problem and approach. 
\begin{itemize}
\item \textbf{Automatic transversality:} the computer assisted proof of 
Theorem \ref{thm:ourThm1} exploits Theorem \ref{prop:finiteDimNewton}, which is a result of 
Newton-Kantorovich type.  
A feature of the Newton-like argument is
that, whenever the hypotheses are verified, we automatically obtain 
non-degeneracy of the solution (in the sense that 
a certain Jacobian matrix is invertible).  Then in Section \ref{sec:ideaOfTheProof} we
prove that the transversality hypothesis of Devaney's theorem are obtained from 
this non-degeneracy \textit{for free} (or almost for free -- we only have to check
that the gradient of the energy at the homoclinic point is not zero).  
Earlier transversality results of this kind 
appear in \cite{MR3207723, MR3068557}, however these previous 
results do not apply in an energy section.  
\item \textbf{Rigorous lower bounds on transfer times:} the argument of the present 
work exploits some validated numerical methods for growing atlases for  
stable/unstable manifolds which were recently developed by the authors in \cite{manifoldPaper1}.
Because of the way we systematically grow the manifold atlas, we are also able to 
rule out connecting orbits.  
The ability to rule out orbits leads to lower bounds on transport times.
This aspect of our approach is 
discussed further in Section  \ref{sec:boundtransporttime}. 
\end{itemize}

\subsection{The equations of motion for the CRFBP}\label{sec:intro}

\begin{figure}[!h]
\centering
\includegraphics[width=3in,height=3in]{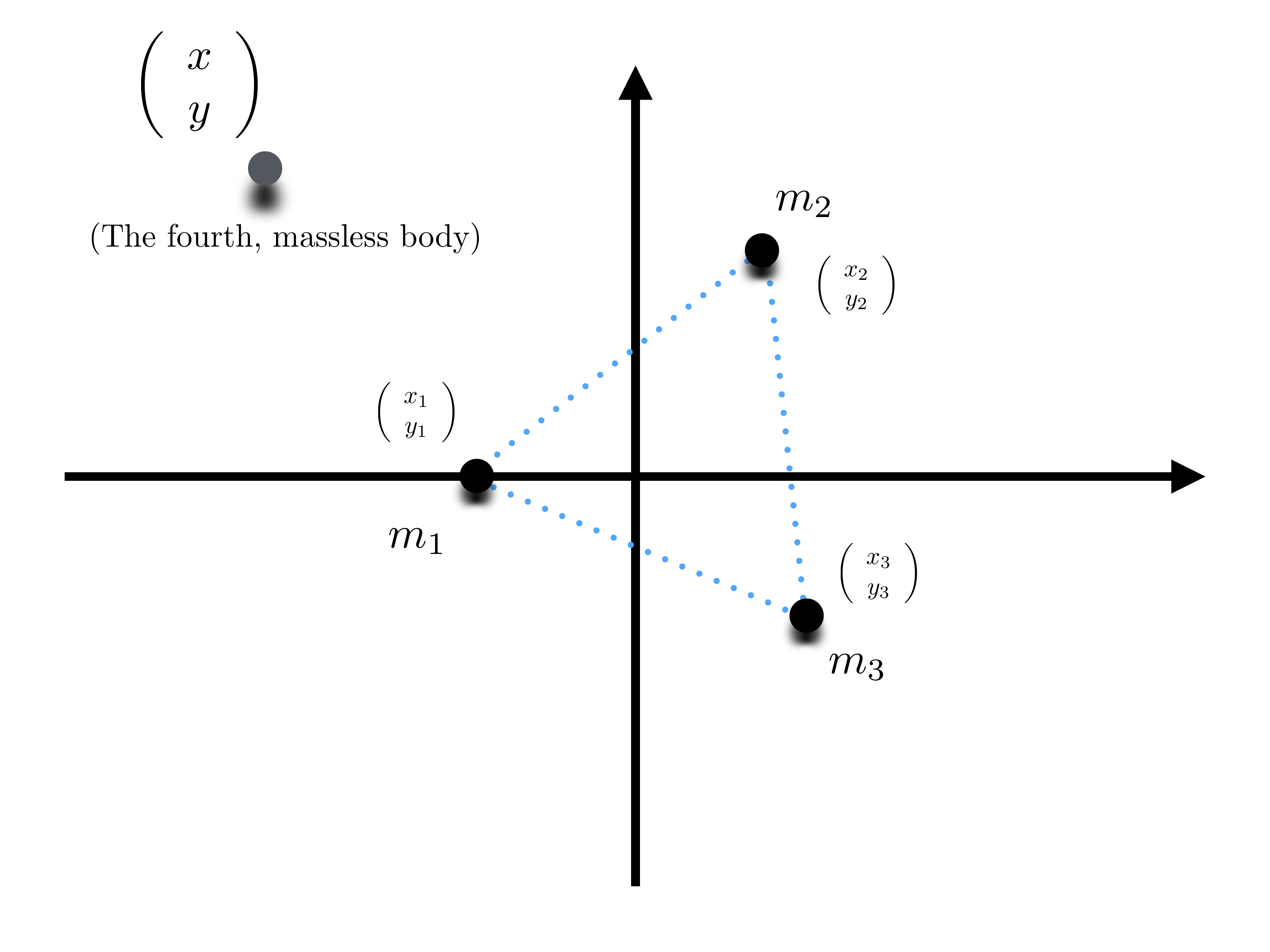}
\caption{Configuration space for the CRFBP: The three primaries 
with masses $m_1,m_2,$ and $m_3$ are arranged in the equilateral triangle configuration of Lagrange.
The equilateral configuration is a relative equilibrium solution of the 
three body problem.  After shifting the center of mass to 
the origin, removing the velocity of the center, 
 and changing to a co-rotating coordinate frame, 
 we study the dynamics of a fourth massless 
particle like an asteroid or space craft.  
The vector field governing the motion of the massless particle 
is given in 
Equation \eqref{eq:SCRFBP}.}\label{rotatingframe}
\end{figure}

Consider three particles with masses $0 < m_3 \leq m_2 \leq m_1 < 1$,
normalized so that 
\[
m_1 + m_2 + m_3 = 1.
\]
Assume the masses are located at the vertices of a planar equilateral triangle, 
rotating with constant angular velocity.  That is, we assume that the three massive 
bodies, which we call the ``primaries'', are in the triangular configuration of Lagrange.   
We choose a co-rotating coordinate frame which fixes
the barycenter of the triangle at the origin, 
the first primary on the negative $x$-axis, and causes the positive $x$-axis to cut through the opposite side of the triangle.  
Once in co-rotating coordinates we 
are interested in the dynamics of a fourth, massless particle
$p$ with coordinates $(x,y)$ moving in the 
gravitational field of the primaries.   
The situation is illustrated in Figure \ref{rotatingframe}.

We write $(x_1, y_1)$, $(x_2, y_2)$ and $(x_3, y_3)$
to denote the 
locations of the primary masses.  
Explicit formulas for the values of 
$(x_j, y_j)$, $j = 1,2,3$
as a function of the masses are are 
recorded in Appendix \ref{sec:CRFBP_basics}.
Define the potential function 
\begin{equation} \label{eq:CRFBP_potential}
\Omega(x,y) :=
\frac{1}{2} (x^2 + y^2) + \frac{m_1}{r_1(x,y)} + \frac{m_2}{r_2(x,y)} + \frac{m_3}{r_3(x,y)}, 
\end{equation}
where 
\begin{equation} \label{eq:def_r}
r_j(x,y) := \sqrt{(x-x_j)^2 + (y-y_j)^2},  \quad \quad \quad j = 1,2,3.
\end{equation}
%
%
%
Let $\mathbf{x} = (x, \dot x, y, \dot y) \in \mathbb{R}^4$ denote the
state of the system.   
The equations of motion in the rotating frame are 
\[
\mathbf{x}' = f(\mathbf{x}),
\]
where
\begin{equation}\label{eq:SCRFBP}
 f(x, \dot x, y, \dot y) := 
\left(
\begin{array}{c}
\dot x \\
2 \dot y + \Omega_x(x, y) \\
\dot y \\
-2 \dot x + \Omega_y(x, y) \\
\end{array}
\right).
\end{equation}
The explicit form of the functions $\Omega_{x,y}$, along with some
 other useful formulas for the CRFBP,  
 are given in  Appendix \ref{sec:CRFBP_basics}.

The system conserves the quantity   
\begin{eqnarray} \label{eq:CRFB_energy}
E(x, \dot x, y, \dot y) &= 
\frac{1}{2}\left( {\dot x}^2 + {\dot y}^2 \right) - \Omega(x,y),
 \label{eq:energy}
\end{eqnarray}
which is called the \textit{Jacobi integral}.   
We refer to $E$ as ``the energy'' of the CRFBP,
though strictly speaking the mechanical energy of the system is 
$- E$.  Note that $E$ is continuous (in fact real analytic) away from the 
primaries.  

Equation \eqref{eq:SCRFBP} defines a complex valued 
vector field $f$ on the set 
\begin{equation} \label{eq:defDomain_U}
U := \left\{ (x, \dot x, y, \dot y) \in \mathbb{C}^4 \, : \, 
(x,y) \neq (x_{j},y_{j}) \text{ for } j=1,2,3 \right\},
\end{equation}
just by letting the arguments take complex values.  In any 
contractible subdomain of $U$ it is possible to define 
a single valued branch of the square root, and 
on any such domain $f$ is analytic.

Before discussing our results in detail it is reasonable to say a few words 
about related work on the problem.
The literature on the restricted four body problem is rich, and a thorough scholarly 
review would take us far afield.
The interested reader will find many other interesting references by consulting the works cited  
below.

The first modern study of the CRFBP focused on the number, location, and stability of
the equilibrium configurations \cite{MR510556}.  Many other interesting 
questions about equilibrium configurations remain open, and there has been 
sustained interest this topic.  See for example the works of 
\cite{MR2232439, MR2845212, MR3176322, MR2784870, MR3463046}.
Further studies examine periodic solutions of the CRFBP
from both theoretical and numerical viewpoints 
\cite{MR3500916, MR3554377, MR3571218}.
The studies of \cite{MR2917610, maximePOman} considers also the stable/unstable manifolds 
attached to periodic orbits.

More global considerations such as connecting orbits and chaotic 
dynamics are studied from a numerical perspective in 
\cite{MR3304062, MR2596303, MR2013214, maximePOman}.
Theoretical/pen and paper works on heteroclinic/homoclinic orbits and chaotic 
motions are found in the works of 
\cite{MR3105958, MR3038224, MR3158025, MR3626383}, and use 
perturbative methods to establish the existence of complex 
dynamics.  The work of \cite{MR3239345} considers regularization 
of collisions with the primary bodies.  
A Hill's approximation is developed in 
\cite{MR3346723}.

Finally, it must be remarked that readers interested in methods of
computer assisted proof as tools for studying invariant manifolds and 
connecting orbits in  celestial mechanics problems should also consult the 
works of \cite{MR1947690, MR2112702, MR2785975, capinski_roldan, MR3622273, 
MR1961956, MR3443692, MR3032848, fourierAutomaticDiff} 
on the dynamics of the circular three body problem, 
as well as to the computer assisted work of 
\cite{MR2185163, MR2012847, MR2259202, MR2312391}
on gravitational $N$-body problems.  This list is of course
far from exhaustive, and the reader will find many additional 
references when consulting these works.

\subsection{The proof of Theorem \ref{thm:ourThm1}} \label{sec:ideaOfTheProof}
The strategy behind the proof of Theorem \ref{thm:ourThm1} is this:
first we formulate a zero finding problem whose non-degenerate roots correspond to transverse
homoclinic connections for an equilibrium solution of the CRFBP.
Next we compute a numerical solution of this zero finding problem.  
Finally we develop a-posteriori analysis and 
validated numerical methods which establish 
the existence of a true zero 
near the approximate numerical solution. 
In the present section we derive an appropriate zero finding problem.

\begin{figure}[t!]
\center{
  \includegraphics[width=0.5\linewidth]{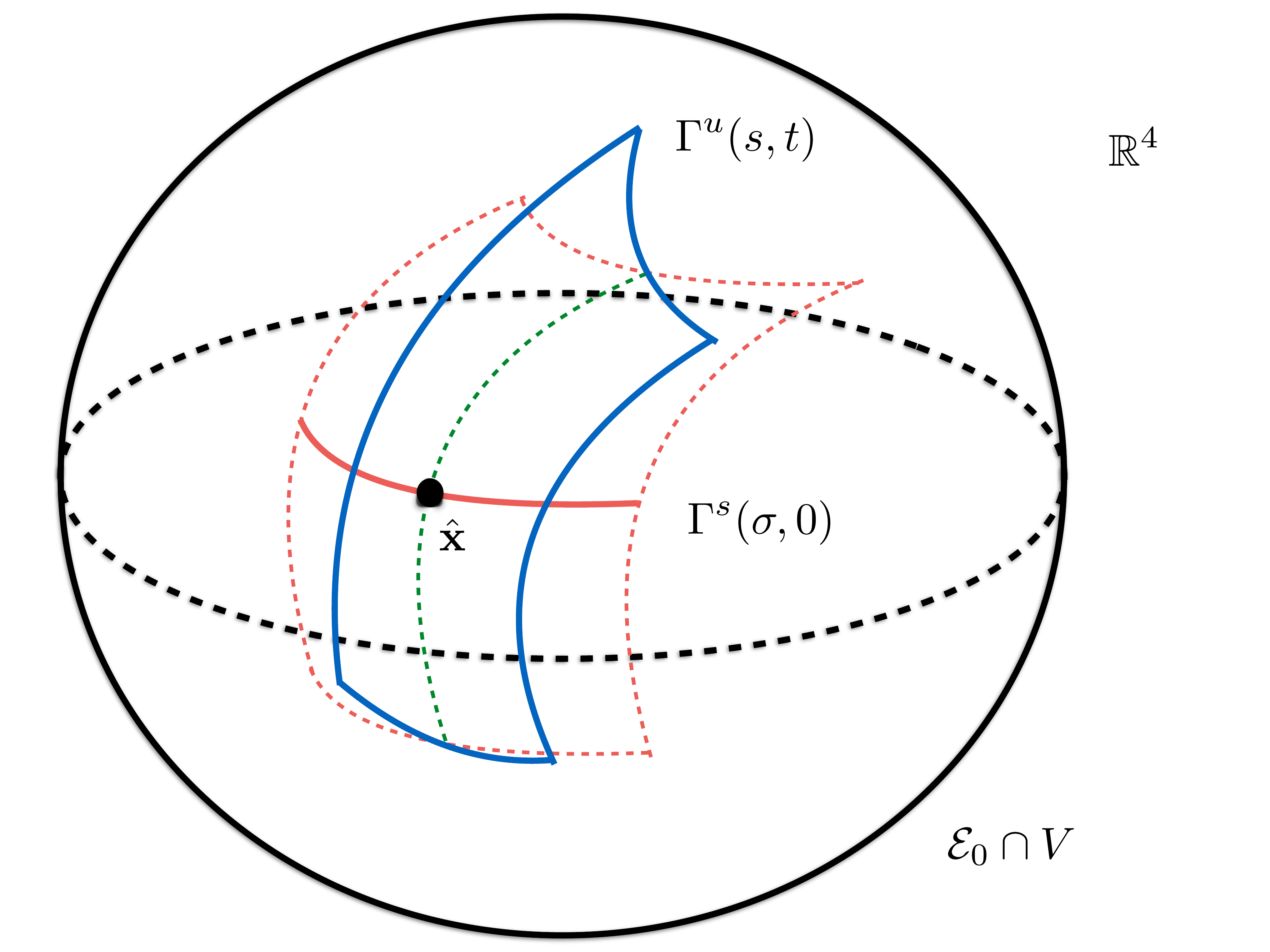}
}
\caption{
\textbf{Schematic illustration of the zero finding problem for the 
connecting orbit}: If $\mathcal{E}_0$ is the three dimensional energy section 
of the equilibrium $\mathbf{x}_0$ (not pictured) and $U$ is a ball in $\mathbb{R}^4$ 
then the figure illustrates a transverse intersection of the 
stable and unstable manifolds at $\hat{\mathbf{x}} \in U \cap \mathcal{E}_0$.
The parameterizations $\Gamma^{s,u} \colon [-1,1]^2 \to \mathbb{R}^4$ are
charts for some neighborhoods of the stable and unstable manifolds $W^{s,u}(\mathbf{x}_0)$.
These are represented respectively by the red and blue surface elements.
By restricting $\Gamma^s$ to its zero section -- shown as the solid red curve --
we isolate $\hat{\mathbf{x}}$. The desired homoclinic is the orbit through $\hat{\mathbf{x}}$ pictured as the dotted green line in the figure. The orbit lies in the transverse intersection of the stable/unstable manifolds, though
the transversality is only relative to the energy section.
} \label{fig:connection}
\end{figure}

Let $f \colon U \subset \mathbb{R}^4 \to \mathbb{R}^4$ denote the CRFBP vector field 
defined in Equation \eqref{eq:SCRFBP},  $\mathbf{x}_0 \in \mathbb{R}^4$ be an equilibrium solution, 
and let $\Phi: U \times \rr \to \rr^4$ denote the flow generated by $f$. Suppose that $\Gamma^u, \Gamma^s \colon [-1, 1]^2 \subset \mathbb{R}^2 \to \mathbb{R}^4$
are smooth maps with 
\[
\Gamma^u([-1,1]^2) \subset W^u(\mathbf{x}_0), 
\quad \quad \quad \mbox{and} \quad \quad \quad 
\Gamma^s([-1,1]^2) \subset W^s(\mathbf{x}_0).
\]
Components of the charts are denoted by $\Gamma^{s,u}_{j}$, for $j = 1,2,3,4$.
Assume that $\Gamma^{s,u}$ are \textit{well aligned} with the flow, 
in the sense that  
\begin{equation} \label{eq:wellAlligned}
\Phi(\Gamma^{s,u}(s, t_1), t_2/\tau_{s,u}) = \Gamma^{s,u}(s, t_1 + t_2),
\end{equation}
for all $t_1, t_2$ so that $t_1 + t_2 \in [-1,1]$. Here 
$\tau_{s,u} > 0$  are reparameterizations of time associated with 
the unstable/stable charts.
The following Lemma defines the appropriate zero finding problem.
The geometric meaning of the problem is illustrated in Figure \ref{fig:connection}.

\begin{lemma} \label{lem:aPosEnergyLemma1} {\em
Define the function $G \colon \times [-1,1]^3 \to \mathbb{R}^3$
by 
\[
G(s, t, \sigma) := 
\left(
\begin{array}{c}
\Gamma_1^u(s,t) - \Gamma_1^s(\sigma, 0) \\
\Gamma_2^u(s,t) - \Gamma_2^s(\sigma, 0) \\
\Gamma_3^u(s,t) - \Gamma_3^s(\sigma, 0) \\
\end{array}
\right),
\]
and suppose that $(\hat s, \hat t, \hat \sigma) \in [-1, 1]^3$ has
$G(\hat s, \hat t, \hat \sigma) = 0$.  
If $\Gamma_4^u(\hat s, \hat t)$ and $\Gamma_4^s(\hat \sigma, 0)$ have the same
sign, 
then $\hat{\mathbf{x}} := \Gamma^u(\hat s, \hat t)$ is
homoclinic to $\mathbf{x}_0$.}
\end{lemma}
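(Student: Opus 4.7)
The plan is to exploit conservation of the Jacobi integral $E$ in order to promote the three scalar equations $G = 0$ into a genuine equality of phase points in $\mathbb{R}^4$. The key observation is that, since $\Gamma^{u}$ and $\Gamma^{s}$ are assumed to take values in the invariant manifolds $W^{u}(\mathbf{x}_0)$ and $W^{s}(\mathbf{x}_0)$ respectively, every point in the image of either chart lies on the energy level set $E^{-1}(E(\mathbf{x}_0))$. Granting this, the equations $G(\hat s, \hat t, \hat \sigma) = 0$ equate the first, second, and third coordinates of $\Gamma^{u}(\hat s, \hat t)$ and $\Gamma^{s}(\hat \sigma, 0)$, and the scalar identity $E(\Gamma^{u}(\hat s, \hat t)) = E(\Gamma^{s}(\hat \sigma, 0))$ collapses, after cancelling the identical $\Omega$ and $\dot x^{2}$ contributions, to $(\Gamma^{u}_4(\hat s, \hat t))^{2} = (\Gamma^{s}_4(\hat \sigma, 0))^{2}$. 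The sign hypothesis then upgrades this equality of squares to an equality of signed quantities, so the two phase points coincide.

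The steps I would carry out, in order, are the following. First, record the standard fact that $W^{s,u}(\mathbf{x}_0) \subset E^{-1}(E(\mathbf{x}_0))$; this follows from the continuity of $E$ on $U$ together with its constancy along orbits of $\Phi$, since for any $\mathbf{x} \in W^{u}(\mathbf{x}_0)$ one has $E(\mathbf{x}) = \lim_{t \to -\infty} E(\Phi(\mathbf{x}, t)) = E(\mathbf{x}_0)$, and analogously for $W^{s}(\mathbf{x}_0)$. Second, substitute the explicit formula \eqref{eq:CRFB_energy} into $E(\Gamma^{u}(\hat s, \hat t)) = E(\Gamma^{s}(\hat \sigma, 0))$ and use $G(\hat s, \hat t, \hat \sigma) = 0$ to cancel every term depending only on the first three phase coordinates, reducing to $(\Gamma^{u}_4(\hat s, \hat t))^{2} = (\Gamma^{s}_4(\hat \sigma, 0))^{2}$. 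Third, apply the sign hypothesis to conclude $\Gamma^{u}_4(\hat s, \hat t) = \Gamma^{s}_4(\hat \sigma, 0)$. Finally, observe that $\hat{\mathbf{x}} := \Gamma^{u}(\hat s, \hat t) = \Gamma^{s}(\hat \sigma, 0)$ is therefore a single phase point lying in both $W^{u}(\mathbf{x}_0)$ and $W^{s}(\mathbf{x}_0)$, so its forward and backward orbits under $\Phi$ both tend to $\mathbf{x}_0$, which is precisely the definition of a homoclinic orbit.

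I do not expect any real difficulty; once energy conservation is on the table the lemma becomes essentially a one-line computation. The only point deserving care is the first step, where one must verify that the image sets of $\Gamma^{u,s}$ really consist of points whose backward and forward trajectories converge to $\mathbf{x}_0$. The well-alignment identity \eqref{eq:wellAlligned} ensures that flowing a chart image in the appropriate time direction stays within the chart, which reduces the asymptotic question to the behaviour at the chart boundary; this will be guaranteed by the explicit construction of the parameterizations $\Gamma^{s,u}$ supplied elsewhere in the paper.
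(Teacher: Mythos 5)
Your proposal is correct and follows essentially the same route as the paper's proof: both arguments use that $W^{s,u}(\mathbf{x}_0)$ lie in the energy level set of $\mathbf{x}_0$, cancel the terms depending on the first three coordinates in the Jacobi integral to obtain $\left(\Gamma_4^u(\hat s,\hat t)\right)^2 = \left(\Gamma_4^s(\hat\sigma,0)\right)^2$, and then use the sign hypothesis to conclude that all four components coincide, so the point lies in $W^u(\mathbf{x}_0)\cap W^s(\mathbf{x}_0)$. The only remark worth making is that your closing worry about chart boundaries is unnecessary: the containments $\Gamma^{u}([-1,1]^2)\subset W^u(\mathbf{x}_0)$ and $\Gamma^{s}([-1,1]^2)\subset W^s(\mathbf{x}_0)$ are hypotheses on the charts, so no further verification is needed inside this lemma.
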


\begin{proof}
First note that
\[
\Gamma^s(s, t) = \Gamma^u(\sigma, \tau), 
\]
implies that 
\[
\hat{\mathbf{x}} = \Gamma^s(s, t) \in W^s(\mathbf{x}_0) \cap W^u(\mathbf{x}_0), 
\]
and hence $\hat{\mathbf{x}}$ is homoclinic to $\mathbf{x}_0$, 
simply because $\Gamma^{s,u}$ are charts for the stable/unstable manifolds. 
Restricting to the $\tau = 0$ section for $\Gamma^u$ isolates an intersection point.

Now suppose that $(\hat s, \hat t, \hat \sigma) \in [-1, 1]^3$ and that 
$G(\hat s, \hat t, \hat \sigma) = 0$.
Then $\Gamma^u(\hat s, \hat t)$ and $\Gamma^s(\hat \sigma, 0)$
already agree in their first three components,
and we focus on the fourth component.
Define the numbers
\[
x^u = \Gamma_1^u(\hat s, \hat t), 
\quad \quad 
\dot x^u =  \Gamma_2^u(\hat s, \hat t), 
\quad \quad 
y^u =  \Gamma_3^u(\hat s, \hat t),
\]
and 
\[
x^s = \Gamma_1^s(\hat \sigma, 0), 
\quad \quad 
\dot x^s =  \Gamma_2^s(\hat \sigma, 0), 
\quad \quad 
y^s =  \Gamma_3^s(\hat \sigma, 0).
\]
Since $G(\hat s, \hat t, \hat \sigma) = 0$ we have that 
\begin{equation} \label{eq:theThreeEqualities}
x^u = x^s, \quad \quad \quad
\dot x^u = \dot x^s, \quad \quad \quad 
\mbox{and} \quad \quad \quad 
y^u = y^s.
\end{equation}
Let 
\[
C_0 = E(\mathbf{x}_0),
\]
denote the energy of the equilibrium.  Then 
\begin{equation} \label{eq:equalEnergies}
E(x^u, \dot x^u, y^u,  \Gamma_4^u(\hat s, \hat t)) = 
E(x^s, \dot x^s, y^s,  \Gamma_4^s(\hat \sigma, 0)) = C_0.
\end{equation}
This is because points in the stable/unstable manifolds have the same energy as $\mathbf{x}_0$,
a fact which follows from the continuity of the function $E$ at $\mathbf{x}_0$.

Recalling the formula for the energy functional given in Equation 
\eqref{eq:CRFB_energy}, Equation \eqref{eq:equalEnergies} becomes 
\[
\frac{1}{2}\left( \left({\dot x^u}\right)^2 + { \Gamma_4^u(\hat s, \hat t)}^2 \right) - \frac{1}{2} (\left({x^u}\right)^2 + \left({y^u}\right)^2)
- \frac{m_1}{r_1(x^u, y^u)} - \frac{m_2}{r_2(x^u, y^u)} - \frac{m_3}{r_3(x^u, y^u)}
= 
\]
\[
\frac{1}{2}\left( \left({\dot x^s}\right)^2 + {\Gamma_4^s(\hat \sigma, 0)}^2 \right) - \frac{1}{2} (\left({x^s}\right)^2 + \left({y^s}\right)^2)
- \frac{m_1}{r_1(x^s, y^s)} - \frac{m_2}{r_2(x^s, y^s)} - \frac{m_3}{r_3(x^s, y^s)},
\]
and employing the equalities of Equation \eqref{eq:theThreeEqualities}
reduces this to 
\[
 \Gamma_4^u(\hat s, \hat t)^2 = \Gamma_4^s(\hat \sigma, 0)^2,
\]
or 
\[
\Gamma_4^u(\hat s, \hat t) = \pm {\Gamma_4^s(\hat \sigma, 0)}.
\]
But $\Gamma_4^u(\hat s, \hat t)$ and $\Gamma_4^s(\hat \sigma, 0)$ 
have the same sign by hypothesis, and we conclude they are actually equal.
It follows that 
\[
\Gamma^u(\hat s, \hat t) = \Gamma^s(\hat \sigma, 0),
\]
in all four components
and hence the orbit of $\hat{\mathbf{x}}$ is homoclinic to $\mathbf{x}_0$
as desired. 
\end{proof}

The Devaney theorem requires more than the existence of a 
homoclinic solution.  We need to verify that 
the stable and unstable manifolds intersect transversally in the 
energy manifold.  
The following lemma provides a simple a-posteriori
condition verifying the transversality.  

\begin{lemma} \label{lem:aPosEnergyLemma2} {\em
Suppose that $G \colon [-1, 1]^3 \to \mathbb{R}^3$ is as in Lemma \ref{lem:aPosEnergyLemma1}
and that $(\hat s, \hat t, \hat \sigma) \in (-1, 1)^3$ has $G(\hat s, \hat t, \hat \sigma) = 0$.  
Assume in addition that $\Gamma_4^u(\hat s, \hat t)$ and $\Gamma_4^s(\hat \sigma, 0)$ have the same
sign, and let $\hat{\mathbf{x}} = \Gamma^u(\hat s, \hat t) = 
\Gamma^s(\hat \sigma, 0)$ denote the resulting 
homoclinic point, which exists by Lemma \ref{lem:aPosEnergyLemma1}.
Assume that $DG(\hat s, \hat t, \hat \sigma)$ is nonsingular,
and that 
\[
\nabla E(\hat{\mathbf{x}}) \neq 0.
\]
Then the energy level set is a smooth $3$-manifold near $\hat{\mathbf{x}}$, and
the stable/unstable manifolds of $\mathbf{x}_0$ intersect transversally 
at $\hat{\mathbf{x}}$ when restricted to the energy manifold.  }
\end{lemma}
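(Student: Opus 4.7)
The plan is to split the statement into two parts: the smoothness of the energy level set (which follows immediately from the implicit function theorem) and the transversality (which I would reduce to a linear algebra computation). The two key geometric inputs for the linear algebra are the flow-alignment identity \eqref{eq:wellAlligned}, which pins down the ``time'' direction inside each tangent space, and the nondegeneracy of $DG$, which supplies the remaining independent directions.

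Because $E$ is real analytic on $U$ and $\nabla E(\hat{\mathbf{x}}) \neq 0$, the implicit function theorem immediately gives that $\{E = C_0\}$ is a smooth codimension-one submanifold near $\hat{\mathbf{x}}$ with tangent space $\ker \nabla E(\hat{\mathbf{x}})$. For the tangent spaces to the invariant manifolds I would introduce
\[
v_1 := \partial_s \Gamma^u(\hat s, \hat t), \qquad v_2 := \partial_t \Gamma^u(\hat s, \hat t), \qquad v_3 := \partial_\sigma \Gamma^s(\hat \sigma, 0),
\]
so that $\{v_1, v_2\}$ spans $T_{\hat{\mathbf{x}}} W^u(\mathbf{x}_0)$ and $\{v_3, \partial_t \Gamma^s(\hat \sigma, 0)\}$ spans $T_{\hat{\mathbf{x}}} W^s(\mathbf{x}_0)$. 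Differentiating \eqref{eq:wellAlligned} in $t_2$ at $t_2 = 0$ yields
\[
\partial_t \Gamma^u(\hat s, \hat t) = \tau_u^{-1} f(\hat{\mathbf{x}}), \qquad \partial_t \Gamma^s(\hat \sigma, 0) = \tau_s^{-1} f(\hat{\mathbf{x}}),
\]
so both ``time'' tangent vectors collapse onto the single line spanned by $f(\hat{\mathbf{x}})$, and consequently $T_{\hat{\mathbf{x}}} W^u + T_{\hat{\mathbf{x}}} W^s = \mathrm{span}\{v_1, v_2, v_3\}$, a subspace of $\mathbb{R}^4$ of dimension at most three.

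Next I would propagate the hypothesis $\det DG(\hat s, \hat t, \hat \sigma) \neq 0$ into genuine linear independence of the $4$-vectors $v_1, v_2, v_3$. The columns of $DG$ are precisely the projections of $v_1, v_2, -v_3$ onto the first three coordinates of $\mathbb{R}^4$, so any nontrivial linear dependence among $v_1, v_2, v_3$ in $\mathbb{R}^4$ would descend to the same dependence among the columns of $DG$, contradicting nonsingularity. Because $E$ is conserved along the flow and continuous at $\mathbf{x}_0$, both $W^{s,u}(\mathbf{x}_0)$ lie in $\{E = C_0\}$, so each $v_i$ lies in the three-dimensional tangent space $T_{\hat{\mathbf{x}}}\{E = C_0\} = \ker \nabla E(\hat{\mathbf{x}})$. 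Three linearly independent vectors in a three-dimensional subspace span it, which is exactly the transversality of $W^u$ and $W^s$ at $\hat{\mathbf{x}}$ within the energy manifold.

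The only step requiring genuine care is the promotion of the $3 \times 3$ nondegeneracy of $DG$ to linear independence of the full $4$-vectors, and even this is immediate once one observes that linear dependence is preserved under projection onto the first three coordinates. Everything else is bookkeeping: the smoothness is a textbook implicit-function-theorem application, the alignment identity collapses two of the four tangent-space generators onto a common line, and conservation of $E$ along $W^{s,u}(\mathbf{x}_0)$ places the relevant $v_i$ into the correct codimension-one tangent space automatically.
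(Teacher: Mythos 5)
Your proposal is correct and follows essentially the same route as the paper: implicit function theorem for the level set, the well-alignment identity to collapse both time-derivatives onto $f(\hat{\mathbf{x}})$, the identification of the columns of $DG(\hat s,\hat t,\hat\sigma)$ with the first three components of $\partial_s\Gamma^u,\ \partial_t\Gamma^u,\ \partial_\sigma\Gamma^s$ to get linear independence, and the observation that three independent vectors inside the three-dimensional space $\ker\nabla E(\hat{\mathbf{x}})$ must span it. The only difference is that the paper spells out, via an explicit curve argument, why each tangent vector of the charts lies in $\ker\nabla E(\hat{\mathbf{x}})$, a step you state more briefly but correctly.
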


\begin{proof}
Let
\[
C_0 = E(\mathbf{x}_0),
\]
once again denote the equilibrium energy and define
\[
\mathcal{E}_0 := \left\{ \mathbf{x} \in \mathbb{R}^4 \, : E(\mathbf{x}) = C_0 \right\},
\]
the energy level set of $C_0$.  
It is an elementary fact from differential calculus (essentially the implicit function theorem) that,
if $\mathbf{x} \in \mathcal{E}_0$ is a regular point for $E$ 
then there exists an open neighborhood $V \subset \mathbb{R}^4$ of $\mathbf{x}$ so that $\mathcal{E}_0 \cap V$ 
is an embedded $3$ dimensional disk
(see for example Corollary $8.9$ Chapter $8$ of \cite{MR2954043},
or Chapter 1, Section 4 of \cite{MR2680546}). 

Since  $E \colon U \subset \mathbb{R}^4 \to \mathbb{R}$ is real valued, 
$\mathbf{x}$ is a regular point if and only if 
\[
\nabla E(\mathbf{x}) \neq 0.
\] 
But $\nabla E(\hat{\mathbf{x}}) \neq 0$ by hypothesis, and by these remarks
we conclude that $\mathcal{E}_0$ is a smooth three dimensional 
manifold near the homoclinic intersection point $\hat{\mathbf{x}}$.  In particular, 
the tangent space of $\mathcal{E}_0$ is 
well defined at $\hat{\mathbf{x}}$, and has
\[
\mbox{dim} \left(T_{\hat{\mathbf{x}}} \mathcal{E}_0 \right) = 3.
\]

Next, recall that by the stable manifold theorem $W^{s,u}(\mathbf{x}_0) \subset \mathbb{R}^4$ are themselves smooth 2-manifolds
(as regular as $f$). Then each has two dimensional tangent
space at $\hat{\mathbf{x}}$.  Moreover,  $W^{s,u}(\mathbf{x}_0)  \subset \mathcal{E}_0$ again by the continuity of 
$E$ at $\mathbf{x}_0$.  
We claim that the tangent spaces of the stable/unstable manifolds at $\hat{\mathbf{x}}$ are linear subspaces of 
the tangent space of $\mathcal{E}_0$ at $\hat{\mathbf{x}}$.  
In other words: $W^{s,u}(\mathbf{x}_0)$ are embedded submanifolds near $\hat{\mathbf{x}}$.

To see this, define the embedded $2$-disks
\[
M_1 := \Gamma^u((-1,1)^2) \subset W^u(\mathbf{x_0}), 
\quad \quad \quad \mbox{and} \quad \quad \quad
M_2 := \Gamma^s((-1,1)^2) \subset W^s(\mathbf{x_0}).
\]
Note that $\hat{\mathbf{x}} \in M_1 \cap M_2$, 
by hypothesis.
We will now argue that the tangent space of $M_1$
at $\hat{\mathbf{x}}$ is contained in the tangent space of 
$\mathcal{E}_0$ at $\hat{\mathbf{x}}$.
The argument for $M_2$ is identical.  

Choose $\eta \in T_{\hat{\mathbf{x}}} M_1$.  Since $(\hat s, \hat t) \in (-1,1)^2$
there exists (by the definition of the tangent space/tangent vectors) an $\epsilon > 0$ and a curve
$\gamma \colon (-\epsilon, \epsilon) \to \mathbb{R}^2$ with 
\[
\gamma(0) = (\hat s, \hat t), \quad \quad \quad \mbox{and} \quad \quad \quad
\gamma(\alpha) \in (-1,1)^2 \quad \quad \mbox{for all } \alpha \in (-\epsilon, \epsilon),
\]
so that the curve $u \colon (-\epsilon, \epsilon) \to \mathbb{R}^4$ defined by 
\[
u(\alpha) := \Gamma^u(\gamma(\alpha)),
\]
has
\[
\frac{d}{d \alpha} u(0) = \eta.
\]

To obtain the desired containment we must show that
$\eta$ is in the tangent space of $\mathcal{E}_0$.  To see this 
consider the function $g \colon (-\epsilon, \epsilon) \to \mathbb{R}$ defined by 
\[
g(\alpha) = E(\Gamma^u(\gamma(\alpha))), 
\] 
and note that $g(\alpha) = C_0$ for all $\alpha \in (-\epsilon, \epsilon)$, simply 
because $\mbox{image}(\Gamma^u) \subset \mathcal{E}_0$ as remarked above.
Then 
\begin{align*}
0 &= \frac{d}{d \alpha} g(0) \\
&= \frac{d}{d \alpha} E(\Gamma^u(\gamma(0)))\\
&= \left< \nabla E(\Gamma^u(\gamma(0))), \frac{d}{d \alpha} \Gamma^u(\gamma(0)) \right> \\
&= \left< \nabla E(u(0)), u'(0) \right> \\
&= \left< \nabla E(\hat{\mathbf{x}}), \eta   \right>,
\end{align*}
so that 
\[
\eta \in \mbox{ker} \nabla E(\hat{\mathbf{x}}).
\]
But the tangent space of the level set of a smooth real valued function 
coincides with the kernel of its gradient, so that 
\[
\eta \in T_{\hat{\mathbf{x}}} \mathcal{E}_0,
\]
as desired.

Next, we establish the transversality.  
Define the vectors $\eta_1, \eta_2, \eta_3 \in \mathbb{R}^4$ by 
\[
\eta_1 = \partial_s \Gamma^u(\hat s, \hat t), 
\quad \quad \quad 
\eta_2 = \partial_t \Gamma^u(\hat s, \hat t),
\quad \quad \quad \mbox{and} \quad \quad \quad
\eta_3 = - \partial_{\sigma} \Gamma^s(\hat \sigma, 0).
\]
Since $\eta_1, \eta_2 \in T_{\hat{\mathbf{x}}} M_1$, and $\eta_3 \in T_{\hat{\mathbf{x}}} M_2$
it follows that  $\eta_1, \eta_2, \eta_3 \in T_{\hat{\mathbf{x}}}\mathcal{E}_0$
by the discussion above.
The hypothesis that $DG(\hat s, \hat t, \hat \sigma)$ is nonsingular gives that its columns span 
$\mathbb{R}^3$. But the columns of $DG(\hat s, \hat t, \hat \sigma)$ match the first three components of 
$\eta_1, \eta_2, \eta_3$, so that these three vectors   
are linearly independent in $T_{\hat{\mathbf{x}}} \mathcal{E}_0$. 
Now observe that any three linearly independent vectors in $T_{\hat{\mathbf{x}}} \mathcal{E}_0$ span, 
as the tangent space of $\mathcal{E}_0$ at $\hat{\mathbf{x}}$ is three dimensional.

Moreover, since $\eta_1, \eta_2$ are linearly independent they form a basis for 
$T_{\hat{\mathbf{x}}} M_1 = T_{\hat{\mathbf{x}}} W^u(\mathbf{x}_0)$. We claim that 
$\eta_2, \eta_3$ form a basis for $T_{\hat{\mathbf{x}}} M_2 = T_{\hat{\mathbf{x}}} W^s(\mathbf{x}_0)$ as well.   
To see this, recall that 
$\hat{\mathbf{x}} =  \Gamma^s(\hat \sigma, 0) =  \Gamma^s(\hat s,  \hat t)$
is the point of homoclinic intersection and 
note that 
\[
\partial_t \Gamma^s(\hat \sigma, 0) = \frac{1}{\tau_s} f(\Gamma^s(\hat \sigma, 0)), 
\]
as for  $t \in (-1,1)$ the curve $\Gamma^s(\hat \sigma, \tau_s t)$ is a solution
of the differential equation.
(Here we use that the charts $\Gamma^{s,u}$ are well aligned with the flow).  
Moreover 
\[
\partial_t \Gamma^u(\hat s, \hat t) = \frac{1}{\tau_u} f(\Gamma^u(\hat s, \hat t)),
\]
for the same reason.  But then 
\begin{align*}
\eta_2 &= \partial_t \Gamma^u(\hat s, \hat t) \\
&=  \frac{1}{\tau_u} f(\Gamma^u(\hat s, \hat t)) \\
&= \frac{1}{\tau_u} f(\hat{\mathbf{x}}) \\
&=  \frac{\tau_s}{\tau_u}    \frac{1}{\tau_s} f(\Gamma^s(\hat \sigma, 0)) \\
&= \frac{\tau_s}{\tau_u} \partial_t \Gamma^s(\hat \sigma, 0) \\
&= \frac{\tau_s}{\tau_u} \eta_1,
\end{align*}
so that
\begin{align*}
\mbox{span} \left(\eta_2,  \eta_3 \right) &=
 \mbox{span} \left(\partial_t \Gamma^u(\hat s, \hat t), -\partial_\sigma \Gamma^s(\hat \sigma, 0)\right) \\
&= \mbox{span} \left(\partial_\tau \Gamma^s(\hat \sigma, 0), -\partial_\sigma \Gamma^s(\hat \sigma, 0)\right) \\
& = T_{\hat{\mathbf{x}}} M_2  \\
&= T_{\hat{\mathbf{x}}}W^s(\mathbf{x}_0),
\end{align*}
as claimed. 
In summary, we have that 
$\eta_1, \eta_2, \eta_3$ simultaneously 
span the tangent spaces of the stable/unstable manifolds at $\hat{\mathbf{x}}$, 
and also span the tangent space of $\mathcal{E}_0$, which is to say that 
\[
T_{\hat{\mathbf{x}}} W^u(\mathbf{x}_0) \oplus T_{\hat{\mathbf{x}}} W^s (\mathbf{x}_0) = T_{\hat{\mathbf{x}}} \mathcal{E}_0.
\]
In other words $W^{s,u}$ intersect transversally at $\hat{\mathbf{x}}$
as desired.
\end{proof}

The remainder of the paper is organized as follows.  
In Section \ref{sec:a-pos} we state and prove the a-posteriori 
theorem which is the main tool for the computer assisted proof
of Theorem \ref{thm:ourThm1}.  We also illustrate the use of the theorem in a 
simple but important example.  Namely, we establish that 
the CRFBP has a non-trivial saddle-focus equilibrium for the mass parameters 
stated in Theorem \ref{thm:ourThm1}.

In Section \ref{sec:parmMethod} we review the parameterization method
for invariant manifolds.  We review its use as a tool for studying 
local stable/unstable manifolds attached to equilibrium solutions of 
differential equations, and also to study locally invariant manifold patches
which result from the flow advection of curves of initial conditions.
In both cases the desired invariant manifold parameterizations are 
recast as solutions of certain nonlinear partial differential equations
subject to some zero or first order constraints.  
By first parameterizing the local stable/unstable manifolds of the equilibrium 
solution and then advecting a mesh of curves parameterizing the boundary of 
the local manifolds, we are able to compute the chart maps discussed in 
Lemmas \ref{lem:aPosEnergyLemma1} and \ref{lem:aPosEnergyLemma2}.

In Section \ref{sec:formalComputations} we develop formal power series methods
for solving the partial differential equations which appear in 
the parameterization method.  First we make a change of variables which transforms
the CRFBP to a polynomial system of seven (rather than four) differential equations.
Working with polynomial vector fields simplifies the development of formal 
series solutions.   It also standardizes the computer assisted error analysis 
of the truncated series, as discussed in the companion paper \cite{thisPaperII}.
We discuss the algorithms for computing the series solutions to any 
desired finite order.  Then we give a brief discussion of the computer assisted proofs
of mathematically rigorous error bounds for the formal series expansions.  
The actual implementation of these computer assisted proofs
is in the companion paper \cite{thisPaperII}. It is reasonable to 
separate the discussion into two separate papers, as validating the truncation error 
estimates for the parameterization method has a distinctly infinite dimensional 
flavor.  

In Section \ref{sec:results} we show how the results of the earlier sections
are combined to complete the proof of Theorem \ref{thm:ourThm1}.
Section \ref{sec:conclusions} summarizes our conclusions and suggests
some directions for future research.  

The paper concludes with several appendices containing some 
more technical details needed in the implementation of the computer 
assisted arguments.
Appendix \ref{sec:norms} discusses explicit choices of
norms for the vector spaces, 
linear mappings (matrices), and bi-linear mappings appearing 
in the computer assisted proofs.  Appendix 
\ref{sec:boundsAppendix} develops some useful bounds
on first and second derivatives.  
Appendix \ref{sec:CRFBP_basics} catalogs a number of
formulas used in computer assisted proofs  for the CRFBP.
Finally, in Appendix \ref{sec:polyFacts} we deal with the fact that 
the formal power series calculations are carried out for a polynomial 
problem related to the CRFBP.  While it is intuitively clear 
how to recover the CRFBP parameterizations from those of the polynomial 
problem, the relationship between the two problems needs to be 
made precise in order that the results of the present work are 
mathematically rigorous.

%
%

\section{A finite dimensional a-posteriori existence theorem}\label{sec:a-pos}
The following is a modification of the classical Newton-Kantorovich 
Theorem,  with some features which facilitate 
computer assisted verification of the hypotheses.
 In particular the theorem incorporates an approximate derivative
and an approximate inverse, which are especially useful when working with 
functions known only up to some approximation error.
This kind of ``Newton-like'' argument appears frequently as a tool for 
computer assisted proof in analysis.
Our approach is in the functional analytic 
tradition of Lanford, Eckmann, Wittwer, and Koch
whose work on 
on renormalization theory and the proof of the Feigenbaum conjectures
\cite{MR648529, MR775044, MR883539, MR727816} was foundational.
For broader surveys of the literature on computer assisted proof
in analysis we refer the interested reader to the surveys   
\cite{MR759197, jpjbReview, MR2652784, Jay_Kons_Review}.

In the statement of the theorem,  $\| \cdot \|$ is any norm on $\mathbb{R}^n$,
$\| \cdot \|_{M}$ is the induced operator (matrix) norm, and $\| \cdot \|_Q$
is the induced norm on bi-linear mappings.
The explicit form of the norms used in numerical applications  
are discussed in detail in Appendix \ref{sec:norms}.
We include the proof of the theorem for the sake of completeness.

\begin{theorem} \label{prop:finiteDimNewton}
{\em
Let $U \subset \mathbb{R}^n$ be an open set, $F \colon U \to \mathbb{R}^n$
be twice continuously differentiable on $U$.  
Suppose that $\bar{x} \in U$, and that  
$A$, $A^\dagger$ are $n \times n$ matrices.   Let $r_* > 0$ be such that 
$\overline{B_{r_*}(\bar x)} \subset U$.   Suppose that $Y_0, Z_0, Z_1, Z_2$
are positive constants with 
\[
\left\| A F(\bar x) \right\| \leq Y_0,
\]
\[
\left\| \mbox{Id} - A A^\dagger \right\|_M \leq Z_0,
\]
\[
\left\| A\left(A^\dagger - DF(\bar x)\right) \right\|_M \leq Z_1, 
\]
and 
\[
\left\| A \right\|_M \sup_{x \in \overline{B_{r_*}(\bar x)}} \|D^2 F(x) \|_Q \leq Z_2.
\]
Define the polynomial 
\[
p(r) := Z_2 r^2 - (1-Z_0 - Z_1) r + Y_0.
\]
If $0 < r \leq r_*$ has 
\[
p(r) < 0,
\]
then there exists a 
unique $\hat x \in B_{r}(\bar x)$ so that 
\[
F(\hat x) = 0.
\]
Moreover $DF(\hat x)$ is invertible and 
\[
\| DF(\hat x)^{-1}\| \leq \frac{\| A \|}{1 - Z_2 r - Z_0- Z_1}.
\]
}
\end{theorem}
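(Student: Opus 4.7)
The plan is to apply the Banach fixed point theorem to the Newton-like operator $T(x) := x - AF(x)$ on the closed ball $\overline{B_r(\bar x)}$, promote the resulting fixed point to a genuine zero of $F$, and read off the bound on $\|DF(\hat x)^{-1}\|$ via a Neumann series. The four constants $Y_0$, $Z_0$, $Z_1$, $Z_2$ respectively quantify the residual at the approximate solution, the failure of $A$ to left-invert $A^\dagger$, the failure of $A^\dagger$ to equal the true derivative $DF(\bar x)$, and a uniform $D^2 F$ bound on the ball; the polynomial $p$ is built precisely so that the inequality $p(r) < 0$ simultaneously encodes a contraction and an invariance condition for $T$.

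First I would write $F(x) - F(y) = \int_0^1 DF(y + s(x-y))(x-y)\,ds$ and, after inserting $\pm A A^\dagger (x-y)$ and $\pm A\,DF(\bar x)(x-y)$, decompose
\[
T(x) - T(y) = (I - A A^\dagger)(x-y) + A(A^\dagger - DF(\bar x))(x-y) + A\!\int_0^1 \bigl(DF(\bar x) - DF(y + s(x-y))\bigr)(x-y)\,ds.
\]
For $x, y \in \overline{B_r(\bar x)}$ the convex combination $y + s(x-y)$ also lies in $\overline{B_r(\bar x)} \subset \overline{B_{r_*}(\bar x)}$, so a mean value inequality applied to $DF$ bounds the last integrand in operator norm by $r \sup_{\overline{B_{r_*}(\bar x)}}\|D^2 F\|_Q$. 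Taking norms termwise yields the Lipschitz estimate $\|T(x) - T(y)\| \leq (Z_0 + Z_1 + Z_2 r)\|x - y\|$, and combining with $\|T(\bar x) - \bar x\| = \|A F(\bar x)\| \leq Y_0$ gives $\|T(x) - \bar x\| \leq (Z_0 + Z_1 + Z_2 r)r + Y_0$. The hypothesis $p(r) < 0$ rearranges to say precisely that this upper bound is strictly less than $r$ and that the Lipschitz constant is strictly less than $1$, so $T$ is a strict contraction of $\overline{B_r(\bar x)}$ into itself.

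The Banach fixed point theorem then produces a unique $\hat x \in \overline{B_r(\bar x)}$ with $A F(\hat x) = 0$, and the strictness of the inequality places $\hat x$ in the open ball. To upgrade $A F(\hat x) = 0$ to $F(\hat x) = 0$ I would note that $p(r) < 0$ forces $Z_0 + Z_1 < 1$, hence $\|I - A A^\dagger\|_M < 1$ makes $A A^\dagger$ invertible by Neumann series; in finite dimensions this forces $A$ to be invertible, so $F(\hat x) = 0$. For the final bound, the same splitting gives $I - A\,DF(\hat x) = (I - A A^\dagger) + A(A^\dagger - DF(\bar x)) + A(DF(\bar x) - DF(\hat x))$, whose norm is at most $Z_0 + Z_1 + Z_2 r < 1$ (the last term by the mean value inequality and $\|\hat x - \bar x\| \leq r$). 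A Neumann series inverts $A\,DF(\hat x)$ with norm at most $(1 - Z_0 - Z_1 - Z_2 r)^{-1}$, and composing with $A$ on the right delivers the advertised bound on $\|DF(\hat x)^{-1}\|$. The only step that requires any real care is arranging the algebraic decomposition of $T(x) - T(y)$ so that each piece lines up cleanly with exactly one of the constants $Z_0, Z_1, Z_2$; once that bookkeeping is done, everything else is a triangle inequality and the geometric series.
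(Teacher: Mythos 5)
Your proposal is correct and takes essentially the same route as the paper: the Newton-like operator $T(x) = x - AF(x)$ is shown to be a strict contraction mapping $\overline{B_r(\bar x)}$ into the open ball via the same three-term splitting matched to $Z_0$, $Z_1$, $Z_2 r$, and the invertibility of $DF(\hat x)$ together with the bound on $\|DF(\hat x)^{-1}\|$ is obtained by the same Neumann-series argument. The only cosmetic difference is that you derive the Lipschitz constant of $T$ from the integral form of the mean value theorem applied to $F$, whereas the paper bounds $\sup \|DT\|_M$ over the ball and then invokes the mean value inequality.
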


\bigskip

\begin{proof}
Assume that there is an $0 < r \leq r_*$ so that $p(r) < 0$.
Then 
\begin{equation}\label{eq:radPolyProof_ineq1}
Z_2 r^2 + (Z_1 + Z_0)r + Y_0 < r,
\end{equation}
from which we obtain
\[
(Z_2 r +Z_1 + Z_0) + \frac{Y_0}{r} < 1.
\]
Since all quantities are strictly positive we have that 
\[
\kappa := Z_2 r +Z_1 + Z_0 < 1.
\]
Furthermore we see that $Z_0 < 1$, again because the quantities above are all strictly positive.
Then
\[
\left\| \mbox{Id} - A A^\dagger \right\|_M \leq Z_0 < 1, 
\]
and it follows by the Neumann theorem that the matrix $A A^\dagger$ is invertible.  
Then $A$ and $A^\dagger$ are each invertible matrices, 
as an invertible matrix cannot factor through a singular matrix. 

Define the \textit{Newton-like} operator $T \colon U \to \mathbb{R}^n$ by 
\[
T(x) = x - AF(x).
\]
Observe that, since $A$ is an invertible matrix, fixed points of 
$T$ are in one to one correspondence with zeros of $F$.
We show that $T$ has a unique fixed point in $B_r(\bar x)$, using 
Banach's fixed point theorem.

First note that 
\[
DT(x) = \mbox{Id} - A DF(x),
\quad \quad \quad \quad x \in U.
\]  
Since $\overline{B_r(\bar x)} \subset U$ the formula holds throughout the closed set $\overline{B_r(\bar x)}$.
We estimate the derivative of $T$ as follows.  
Let $x \in \overline{B_r(\bar x)}$.  Then 
\begin{align*}
\| DT(x) \|_M &= \| \mbox{Id} - A DF(x) \|_M \\
&\leq \|\mbox{Id} - AA^\dagger \|_M + \| A A^\dagger - A DF(\bar x) \|_M
+ \| A DF(\bar x) - A DF(x) \|_M \\
&\leq  \|\mbox{Id} - AA^\dagger \|_M + \left\| A \left(A^\dagger -  DF(\bar x)\right) \right\|_M
+ \| A \|_M \left( \sup_{y \in \overline{B_r(\bar x)}} \|D^2 F(y) \|_Q \right)  \| x - \bar x \| \\
&\leq  Z_0+ Z_1
+ \| A \|_M \left( \sup_{y \in \overline{B_{r_*}(\bar x)}} \|D^2 F(y) \|_Q  \right) \, r \\
&\leq  Z_0+ Z_1
+ Z_2  r  \\
& \leq \kappa.
\end{align*}
Here we use the second derivative bound of Lemma \ref{lem:lipBoundDF}
from Appendix \ref{sec:norms} to pass from line two to three.
The bound is uniform in $x \in  \overline{B_r(\bar x)}$, so that 
\begin{equation}\label{eq:radPolyProof_derBound}
\sup_{x \in  \overline{B_r(\bar x)}} \| DT(x) \| \leq \kappa < 1.
\end{equation} 

To apply the fixed point theorem on the compact metric space $\overline{B_r(\bar x)}$
we first need to show that $T$ maps $ \overline{B_r(\bar x)}$ into itself.
To see this, let $x \in \overline{B_r(\bar x)}$ and observe that  
\begin{align*}
\| T(x) - \bar x \| &\leq  \|T(x) - T(\bar x) \| + \|T(\bar x) - \bar x \| \\
&\leq \sup_{y \in \overline{B_r(\bar x)}} \|DT(y)\|_M \| x - \bar x\| + \| A F(\bar x)\| \\
&\leq (Z_2 r + Z_0 + Z_1) r + Y_0 \\
&< r,
\end{align*}
by the inequality of Equation \eqref{eq:radPolyProof_ineq1},
so that 
\begin{equation} \label{eq:T_containment}
T\left(\overline{B_r(\bar x)}\right) \subset B_r(\bar x).
\end{equation}
Then $T$ maps a complete metric space into itself 
(in fact the mapping is strictly into the interior).

Now for any $x_1, x_2 \in \overline{B_r(\bar x)}$ we see that 
\begin{align*}
\|T(x_1) - T(x_2) \| & \leq \sup_{y \in \overline{B_r(\bar x)}} \|DT(y)\|_M \| x_1 - x_2 \| \\
&\leq \kappa \| x_1 - x_2 \|, \\ 
\end{align*}
with $\kappa < 1$.  Then $T$ is a strict contraction on $\overline{B_r(\bar x)}$, 
hence has a unique fixed point $\hat x \in  \overline{B_r(\bar x)}$ by the 
Banach fixed point theorem.  In fact, Equation \eqref{eq:T_containment}
gives that $\hat x \in B_r(\bar x)$.  Since fixed points of $T$ correspond to 
zeros of $F$, we have that $\hat x$ is the unique zero of $F$ in $B_r(\bar x)$.

Finally, we show that $DF(\hat x)$ is invertible.  
Define the matrix 
\[
B = -ADF(\hat x) + A DF(\bar x) - ADF(\bar x) + A A^\dagger - A A^\dagger + \mbox{Id}.
\]
Then 
\begin{align*}
ADF(\hat x) &= ADF(\hat x) - A DF(\bar x) + ADF(\bar x) - A A^\dagger + A A^\dagger - \mbox{Id} + \mbox{Id} \\
& = \mbox{Id} - B.
\end{align*}
But $ADF(\hat x) = \mbox{Id} - B$ is an invertible matrix by the Neumann theorem, as 
\begin{align*}
\| B \|_M &\leq  \|  ADF(\hat x) - A DF(\bar x)\|_M + \|ADF(\bar x) - A A^\dagger\|_M + \|A A^\dagger - \mbox{Id} \|_M \\
& \leq  \| A(DF(\hat x) - DF(\bar x))\|_M + \|A(DF(\bar x) -  A^\dagger) \|_M + \|A A^\dagger - \mbox{Id} \|_M \\
&\leq  \| A\|_M \sup_{y \in \overline{B_r(\bar x)}} \| DF(y)\|_Q \, \|\hat x - \bar x \| + Z_1 + Z_0 \\
&\leq   \| A\|_M \sup_{y \in \overline{B_{r_*}(\bar x)}} \| DF(y)\|_Q \, r + Z_1 + Z_0 \\
&\leq   Z_2 r + Z_1 + Z_0 \\
&= \kappa \\
&< 1,
\end{align*}
so that $A$ and $DF(\hat x)$ are each invertible singly,
again due to the fact that an invertible matrix cannot factor through a
singular matrix.  

The Neumann theorem provides the bound 
\[
\|(\mbox{Id} - B)^{-1} \|_M \leq \frac{1}{1 - \kappa},
\]
and since $A$ and $DF(\hat x)$ are invertible we have that
\begin{align*}
[A DF(\hat x)]^{-1} &= DF(\hat x)^{-1} A^{-1} \\
&=  (\mbox{Id} - B)^{-1}.
\end{align*}
Multiplying on the right by $A$ we have
\[
DF(\hat x)^{-1} = (\mbox{Id - B})^{-1} A,
\]
so that taking norms gives
\[
\| DF(\hat x)^{-1}\|_M \leq \frac{\|A\|_M}{1 - (Z_2 r + Z_0 + Z_1)},
\]
and the proof is complete.  
\end{proof}

\bigskip

\subsection{Example: computer assisted proof of 
a non-trivial equilibrium solution.} \label{sec:NKex1}
 The following proposition
provides an elementary application of Theorem \ref{prop:finiteDimNewton}. 
We give the details as a kind of review/tutorial on 
validated numerics for a-posteriori analysis of finite dimensional systems of nonlinear 
equations, and also because the equilibrium point is the jumping off point for all the  
subsequent analysis in the remainder of the paper.  The example 
is simple enough that it could be worked by hand.  Nevertheless, 
careful consideration of this problem highlights the issues which come
up later when we check the hypotheses of the Devaney theorem.

\begin{prop}[Existence of a non-trivial equilibrium in the CRFBP] \label{prop:capEquil} {\em
Consider the circular restricted four body problem 
whose vector field $f$ is defined in Equation \eqref{eq:SCRFBP}.  Choose 
mass values $m_1 = 0.5$, $m_2 = 0.3$, and $m_3 = 0.2$, and let
\[
\bar{\mathbf{x}} = 
\left(
\begin{array}{c}
0.927099246135636 \\
0 \\
0.217703423699760 \\
0
\end{array}
\right)
\]
The vector field $f$ has an isolated equilibrium solution $\mathbf{x}_0 \in \mathbb{R}^4$
with 
\[
\left\|\mathbf{x}_0 - \bar{\mathbf{x}} \right\| \leq 3 \times 10^{-15}.
\]
Here the norm is the max-norm defined in Appendix  \ref{sec:norms}.}
\end{prop}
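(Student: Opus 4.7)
The plan is to apply Theorem \ref{prop:finiteDimNewton} directly to $F := f$ with center $\bar{\mathbf{x}}$ and some radius $r \leq r_* := 3 \times 10^{-15}$. Since $\bar{\mathbf{x}}$ is safely bounded away from each primary $(x_j, y_j)$, the vector field $f$ is real analytic (hence $C^2$) on a neighborhood $U \subset \mathbb{R}^4$ containing the closed ball $\overline{B_{r_*}(\bar{\mathbf{x}})}$, so the smoothness hypothesis of the theorem is satisfied.

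First I would assemble the two matrices required by the theorem. I would take $A^\dagger$ to be a narrow interval enclosure of $Df(\bar{\mathbf{x}})$, obtained by evaluating the explicit first partial derivatives of $\Omega$ (catalogued in Appendix \ref{sec:CRFBP_basics}) on a thin interval containing $\bar{\mathbf{x}}$. Then I would compute a floating point inverse of the midpoint of $A^\dagger$ and inflate the result to a thin interval matrix $A$. With these choices the verification reduces to computing, in interval arithmetic and using the max norm and induced operator norm of Appendix \ref{sec:norms}, the four scalar bounds:
\begin{itemize}
\item $Y_0 \geq \| A\, f(\bar{\mathbf{x}}) \|$, which is on the order of machine epsilon times $\|A\|_M$ because $\bar{\mathbf{x}}$ is a good floating point zero of $f$;
\item $Z_0 \geq \| \mbox{Id} - A A^\dagger \|_M$, small since $A$ was chosen as a numerical inverse of $A^\dagger$;
\item $Z_1 \geq \| A (A^\dagger - Df(\bar{\mathbf{x}})) \|_M$, essentially controlled by the (interval) width of $A^\dagger$;
\item $Z_2 \geq \|A\|_M \, \sup_{\mathbf{x} \in \overline{B_{r_*}(\bar{\mathbf{x}})}} \|D^2 f(\mathbf{x})\|_Q$, a uniform second derivative bound over the entire ball.
\end{itemize}

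The main obstacle is the $Z_2$ bound, since it demands a rigorous enclosure of $D^2 f$ \emph{uniformly} over $\overline{B_{r_*}(\bar{\mathbf{x}})}$ rather than pointwise at $\bar{\mathbf{x}}$. The entries of $D^2 f$ are built from second partials of $\nabla \Omega$, involving factors like $r_j(x,y)^{-5}$ multiplied by polynomials in $(x - x_j, y - y_j)$. Because $\bar{\mathbf{x}}$ is well separated from each primary, I would fatten $\bar{\mathbf{x}}$ to an interval box of radius $r_*$, evaluate these rational expressions on that box using the second derivative bounds of Appendix \ref{sec:boundsAppendix}, take the operator norm, and multiply through by $\|A\|_M$. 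The numerator terms grow only polynomially in the box size while the denominators stay bounded away from zero, so this is a standard interval-arithmetic calculation and no special tricks are required.

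Finally I would verify that the radii polynomial $p(r) = Z_2 r^2 - (1 - Z_0 - Z_1) r + Y_0$ satisfies $p(r_*) < 0$, again in interval arithmetic. With $Y_0$ of order $10^{-16}$ times $\|A\|_M$, with $Z_0 + Z_1$ far below $1$, and with $Z_2 r_*$ negligible at this radius, the inequality should hold with comfortable slack. Theorem \ref{prop:finiteDimNewton} then produces a unique zero $\mathbf{x}_0 \in B_{r_*}(\bar{\mathbf{x}})$ and, as a bonus, guarantees that $Df(\mathbf{x}_0)$ is invertible, so $\mathbf{x}_0$ is the claimed isolated equilibrium with $\|\mathbf{x}_0 - \bar{\mathbf{x}}\| \leq 3 \times 10^{-15}$.
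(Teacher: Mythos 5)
Your proposal is sound and would work, but it takes a different route from the paper. The paper first exploits the structure of the field: since any equilibrium of $f$ must have $\dot x = \dot y = 0$, it reduces the problem to the two--dimensional zero--finding problem $g(x,y) = (\Omega_x(x,y), \Omega_y(x,y)) = 0$ and applies Theorem \ref{prop:finiteDimNewton} to $g$ with $2\times 2$ matrices $A^\dagger \approx Dg(\bar x,\bar y)$ and $A \approx (A^\dagger)^{-1}$, a second--derivative bound on a ball of radius $r_* = 10^{-6}$, and constants $Y_0 \approx 3\times 10^{-15}$, $Z_0, Z_1 \sim 10^{-14}$, $Z_2 \approx 105$, concluding $p(r)<0$ on $[3\times 10^{-15}, 10^{-6}]$. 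You instead apply the theorem directly to the full four--dimensional field $f$. That is legitimate: $Df(\bar{\mathbf{x}})$ has determinant $g_{11}g_{22}-g_{12}^2 \neq 0$ (the equilibrium is a saddle--focus, so zero is not an eigenvalue), the first and third components of $f(\bar{\mathbf{x}})$ vanish identically, and $D^2 f$ involves the same third derivatives of $\Omega$ that the paper bounds, so all four constants are computable exactly as you describe; the direct route even yields isolation immediately from invertibility of $Df(\mathbf{x}_0)$, whereas the $2$D route isolates the equilibrium via the reduction. What the paper's reduction buys is a smaller, symmetric Jacobian $Dg = D^2\Omega$ with $\|A\|_M \approx 0.75$, which matters because the claimed radius $3\times 10^{-15}$ sits only a fraction of a percent above the smallest root of the radii polynomial, $\approx Y_0/(1-Z_0-Z_1)$ with $Y_0 = \|A\,f(\bar{\mathbf{x}})\|$; in your $4$D version $\|A\|_M$ (hence $Y_0$) will differ, so the stated bound is plausible but not automatic and must survive the actual interval computation. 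Two small technical notes: Theorem \ref{prop:finiteDimNewton} is stated for fixed real matrices $A, A^\dagger$, so as in the paper you should take floating--point representatives (e.g.\ the midpoint of your enclosure of $Df(\bar{\mathbf{x}})$) and let $Z_1$ absorb the discrepancy, rather than feeding interval matrices into the theorem; and taking $r_* = 3\times 10^{-15}$ for the second--derivative sup is fine (it only makes $Z_2$ easier) but leaves you no slack if $p(r_*)$ fails to be negative, whereas the paper's choice $r_* = 10^{-6}$ lets one report the largest $r$ in the admissible interval that still meets the target bound.
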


\begin{proof}
Throughout the proof we use the IntLab package running under MatLab 
to obtain interval enclosures of numerical quantities.  
Taking $m_1 = 1/2$, $m_2 = 3/10$ and $m_3 = 1/5$ we use the formulae in Appendix
\ref{sec:CRFBP_basics} -- evaluated with interval arithmetic -- to compute explicit enclosures
of the locations of the primary bodies.
We find that 
\[
x_1 \in [  -0.43588989435407,  -0.43588989435406],
\quad 
y_1 = 0,
\]
\[
x_2 \in [   0.48177304112817,   0.48177304112819],
\quad 
y_2 \in [  -0.39735970711952,  -0.39735970711951],
\]
and
\[
x_3 \in [   0.36706517419289,   0.36706517419290],
\quad 
y_3 \in [   0.59603956067926,   0.59603956067928].
\]
These values allow us to define the functions $r_j$, $j = 1,2,3$
appearing in the definition of $f$ and its derivatives, and 
determine the domain $U \subset \mathbb{R}^4$ on which the 
problem is posed.  Note that $m_2, m_3$ cannot be represented 
exactly as floating point numbers base $2$, so that even these 
masses have to be treated as intervals in all computations.  

An equilibrium of the restricted four body problem 
must have that $\dot x = \dot y = 0$.  Re-examining the vector field $f$ in light of 
this observation reveals that the remaining variables $(x,y)$
are in equilibrium if and only if
\begin{align*}
\Omega_x(x,y) & = 0 \\
\Omega_y(x,y) & = 0.
\end{align*}
Define the mapping $g \colon \mathbb{R}^2 \to \mathbb{R}^2$
by 
\[
g(x,y) := \left(
\begin{array}{c}
\Omega_x(x,y) \\
\Omega_y(x,y) 
\end{array}
\right)
\]
\[
= 
\left(
\begin{array}{c}
   x - \frac{m_1(x - x_1)}{r_1(x,y)^3}
- \frac{m_2(x - x_2)}{r_2(x,y)^3} - \frac{m_3(x - x_3)}{r_3(x,y)^3} \\
y - \frac{m_1(y - y_1)}{r_1(x,y)^3}
- \frac{m_2(y - y_2)}{r_2(x,y)^3} - \frac{m_3(y - y_3)}{r_3(x,y)^3}
\end{array}
\right).
\]
Then $(x, 0 , y, 0)$ is an equilibrium for the CRFBP if and only if $(x,y)$ is a zero 
of for $g$.

Observe that 
\[
Dg(x,y) = 
\left(
\begin{array}{cc}
   \frac{\partial}{\partial x} \Omega_x(x,y) &  \frac{\partial}{\partial y} \Omega_x(x,y)      \\
 \frac{\partial}{\partial x} \Omega_y(x,y)  & \frac{\partial}{\partial y} \Omega_y(x,y)
\end{array}
\right) = 
\left(
\begin{array}{cc}
  g_{11} &g_{12}  \\
 g_{21} &g_{22}
\end{array}
\right),
\]
where the quantities $g_{ij}$, $1 \leq i,j \leq 2$ are as defined in Appendix \ref{sec:CRFBP_basics}.
Using the explicit formulas for $g$ and $Dg$, we run a numerical 
Newton method starting from the initial guess
$(0.92, 0.21)$ and obtain the approximate zero  
\[
\bar x = 0.927099246135636, \quad \quad 
\bar y = 0.217703423699760.
\]
Our goal is to prove that there is true solution nearby via Theorem \ref{prop:finiteDimNewton}.

In preparation for application of Theorem we define $2 \times 2$ matrices  
$A^\dagger$ and $A$.  For the former we choose the $2 \times 2$ matrix of floating point numbers
which results from evaluating the formula for $Dg(\bar x, \bar y)$
without using interval arithmetic.  The result is
\[
A^\dagger := 
\left(
\begin{array}{cc}
2.074531863336581 &  0.163688766491296 \\
   0.163688766491296  & 1.448616847931906
\end{array}
\right).
\]  
The matrix $A$ needs to be a $2 \times 2$ approximate inverse of $A^\dagger$.  
For this we compute a floating point approximation of the 
inverse of $A^\dagger$ using MatLab 
and obtain
\[
A := \left(
\begin{array}{cc}
  0.486372903543232  & -0.054958480394206 \\
  -0.054958480394206 &  0.696523782188810
\end{array}
\right).
\]
The formula for the matrix norm is given explicitly in Appendix  \ref{sec:norms},
and we evaluate this formula to obtain the interval enclosure
\[
\| A \|_M \leq 0.75148226258302.
\]
Now we need to define the positive constants $Y_0$, $Z_0$, $Z_1$, and $Z_2$ hypothesized in 
Theorem \ref{prop:finiteDimNewton}.

First we compute an interval enclosure of the defect 
\[
A g(\bar x, \bar y) \in 
 10^{-14}
 \left(
 \begin{array}{c}
\, [  -0.24035361087522,   0.18345710317770] \\
\, [  -0.25639867955817,   0.29923189175017]
\end{array}
\right),
\]
and using the explicit formula for the norm given in Equation \eqref{eq:normDef} of the Appendix
we have that 
\[
\| A g(\bar x, \bar y) \| \in  [0,   0.29923189175017 \times 10^{-14}].
\]
Then we take 
\[
Y_0 := 0.29923189175017 \times 10^{-14}.
\]

Next we check, again using interval arithmetic and the definitions of $A^\dagger$ and $A$ above, that 
\[
\mbox{Id} - A A^\dagger \in
\]
\[
10^{-15} \left(
\begin{array}{cc}
\, [  -0.22204460492504,   0.11102230246252] & [   0.00000000000000,   0.02775557561563] \\
\, [  -0.01387778780782,   0.01387778780782] & [  -0.22204460492504,   0.11102230246252] \\
\end{array}
\right).
\]
Using the formula for the induced matrix norm derived in Appendix  \ref{sec:norms} we now have that 
\[
\| \mbox{Id} - A A^\dagger  \|_M \in 
 [   0,   0.24980018054067 \times 10^{-15}]. 
\]
So, we define 
\[
Z_0 :=  0.24980018054067 \times 10^{-15}.
\]

For the $Z_1$ bound we first use that $Dg(\bar x, \bar y)$ is given explicitly 
by the formula above.  Once again using interval arithmetic we obtain
the enclosure 
\[
A(A^\dagger - Dg(\bar x, \bar y)) \in  
\]
\[
10^{-13} \left(
\begin{array}{cc}
\, [  -0.05697595725541,   0.05451094681948] & [  -0.08685491176201,   0.08979888842367] \\
\, [  -0.12111986933565,   0.12523041771790] & [  -0.08678718041998,   0.08338891260758] 
\end{array}
\right).
\]
The explicit formula for the matrix norm given in Appendix  \ref{sec:norms} 
allows us to compute the interval enclosure
\[
\| A(A^\dagger - Dg(\bar x, \bar y))\|_M \in [   0.00000000000000,   0.20861933032547 \times 10^{-13}], 
\]
so we define 
\[
Z_1 :=  0.20861933032547 \times 10^{-13}.
\]

Finally, we bound the second derivative of $g$ near the approximate
solution $(\bar x, \bar y)$ on a neighborhood of size 
$r_* = 10^{-6}$ (this choice is somewhat arbitrary).
Exploiting both the formulas for the second derivatives of $F$ given in 
Section \ref{sec:CRFBP_basics}, and the explicit formula for the second derivative bounds 
from Lemma \ref{lem:lift_of_a_solution}, we have that 
\begin{align*}
\sup_{\mathbf{x} \in \overline{B_{r_*}(\bar x, \bar y)}} \| D^2 F(\mathbf{x}) \|_Q \in 
[   131.144085137264,   140.058509630884]
\end{align*}
so that 
\[
\|A\|_M  \sup_{\mathbf{x} \in \overline{B_{r_*}(\bar x, \bar y)}} \| D^2 F(\mathbf{x}) \|_Q  \in 
[ 70.992409102564,   105.251485711422].
\]
Then we take
\[
Z_2 = 105.251485711422.
\]

Now define the polynomial 
\[
p(r) = Z_2 r^2 - (1 - Z_1 - Z_0) r + Y_0,
\]
with $Y_0, Z_0, Z_1$, and $Z_2$ as given above.  
Evaluating the quadratic formula using interval arithmetic we find 
that if 
\[
r \in \left[2.992618149394393 \times 10^{-15}, 0.01 \right],
\]
then 
\[
p(r) \leq 0.
\]
But the result is only meaningful for $r \leq r_*$. 
So for example, for every 
\[
r \in [3 \times 10^{-15}, 10^{-6}],
\]
we have that $r \leq r_*$ and 
\[
p(r) < 0.
\]
We conclude that there exists a true solution pair $(\hat x, \hat y)$ having that 
\[
\| (\bar x - \hat x, \bar y - \hat y) \| < 3 \times 10^{-15},
\]
and the proof is complete.
\end{proof}

\begin{prop}[Saddle-focus stability] \label{prop:saddleFocus}{\em
Consider the CRFBP with masses $m_1 = 1/2$, $m_2 = 3/10$ and $m_3 = 1/5$.
Let $\mathbf{x}_0 = (\hat x, 0, \hat y, 0) \in \mathbb{R}^4$ 
be the equilibrium solution whose existence is 
established in Proposition \ref{prop:capEquil}.  The matrix 
$Df(\mathbf{x}_0)$ has four non-zero eigenvalues 
of the form 
\[
\lambda = \pm \alpha \pm i \beta,
\]
where
\[
\alpha \in [   0.86237485318926,   0.86237485318937] ,
\]
and
\[
\beta \in [   0.99101767480653,   0.99101767480664] .
\]
In particular $\mathbf{x}_0$ is a saddle-focus.}
\end{prop}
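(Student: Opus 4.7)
The plan is to reduce the $4 \times 4$ eigenvalue problem for $Df(\mathbf{x}_0)$ to a scalar quadratic in $\mu = \lambda^2$, exploiting the Hamiltonian structure, and then to evaluate the resulting closed-form expressions with interval arithmetic. Since $\mathbf{x}_0 = (\hat x, 0, \hat y, 0)$ and $f$ has the form given in \eqref{eq:SCRFBP}, a direct computation yields
\[
Df(\mathbf{x}_0) \;=\; \begin{pmatrix} 0 & 1 & 0 & 0 \\ a & 0 & b & 2 \\ 0 & 0 & 0 & 1 \\ b & -2 & c & 0 \end{pmatrix},
\qquad a := \Omega_{xx}(\hat x, \hat y), \quad b := \Omega_{xy}(\hat x, \hat y), \quad c := \Omega_{yy}(\hat x, \hat y).
\]
Cofactor expansion (or, equivalently, appealing to the Hamiltonian symmetry $\lambda \mapsto -\lambda$ which forces an even characteristic polynomial) gives
\[
p(\lambda) \;=\; \lambda^4 + (4 - a - c)\lambda^2 + (ac - b^2).
\]

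The first step is to obtain rigorous enclosures for $a$, $b$, and $c$. For this I use the enclosure $\|\mathbf{x}_0 - \bar{\mathbf{x}}\| \leq 3 \times 10^{-15}$ established in Proposition \ref{prop:capEquil}, together with the explicit expressions for $\Omega_{xx}, \Omega_{xy}, \Omega_{yy}$ recorded in Appendix \ref{sec:CRFBP_basics}, evaluated in IntLab. Substituting $\mu = \lambda^2$ reduces the eigenvalue problem to a quadratic, and its discriminant is $\Delta := (4 - a - c)^2 - 4(ac - b^2)$. The saddle-focus condition follows once I verify, via interval arithmetic, that the enclosure for $\Delta$ lies strictly in $(-\infty, 0)$; the four eigenvalues are then genuinely off both the real and imaginary axes.

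Assuming $\Delta$ is rigorously negative, the complex roots of the quadratic are $\mu_{\pm} = P \pm i Q$ with $P = (a + c - 4)/2$ and $Q = \sqrt{-\Delta}/2$. Taking the principal square root produces the four eigenvalues $\pm \alpha \pm i \beta$ with
\[
\alpha \;=\; \sqrt{\tfrac{1}{2}\bigl(\sqrt{P^2 + Q^2} + P\bigr)}, \qquad
\beta  \;=\; \sqrt{\tfrac{1}{2}\bigl(\sqrt{P^2 + Q^2} - P\bigr)}.
\]
Evaluating these formulas with interval arithmetic supplies the rigorous enclosures stated in the proposition, and I compare against the claimed intervals. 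Since $\alpha, \beta$ are numerically bounded well away from zero (roughly $0.862$ and $0.991$) and $|\Delta|$ is itself of order unity, the nested square roots should not inflate the enclosure widths beyond the required $\sim 10^{-10}$.

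The main obstacle I anticipate is the usual one: repeated interval operations, especially nested square roots and a square root of a difference, can grow enclosure widths faster than naive bookkeeping suggests. If direct interval evaluation fails to yield the stated precision, the fallback is to apply Theorem \ref{prop:finiteDimNewton} to the $2 \times 2$ zero-finding problem whose components are the real and imaginary parts of $p(\alpha + i\beta)$, starting from a numerical Newton iterate $(\bar\alpha, \bar\beta) \approx (0.8624, 0.9910)$. This is entirely analogous to the application of Theorem \ref{prop:finiteDimNewton} in the proof of Proposition \ref{prop:capEquil}, and the required $Y_0, Z_0, Z_1, Z_2$ bounds are easy to assemble since $p$ is a polynomial in $\alpha, \beta$ with coefficients controlled by the enclosures of $a, b, c$. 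Either way, the final bounds match the intervals in the proposition.
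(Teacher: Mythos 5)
Your proposal follows essentially the same route as the paper: the paper's proof simply evaluates the explicit closed-form eigenvalue formula \eqref{eq:CRFBP_eigs} (which comes from exactly the even characteristic polynomial $\lambda^4 + (4-g_{11}-g_{22})\lambda^2 + (g_{11}g_{22}-g_{12}^2)$ you derive, with $a=g_{11}$, $b=g_{12}$, $c=g_{22}$) in interval arithmetic using the enclosures of $(\hat x,\hat y)$ from Proposition \ref{prop:capEquil}. Your explicit check that the discriminant is negative and your extraction of $\alpha,\beta$ via the principal complex square root are just the careful bookkeeping implicit in applying that formula, so the argument is correct and matches the paper.
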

\begin{proof}
To find the eigenvalues we use the explicit formula given by Equation \eqref{eq:CRFBP_eigs}
in the Appendix.  The formula is evaluated using interval arithmetic and the interval enclosures 
of $\hat x, \hat y$ from Proposition \ref{prop:capEquil}.
\end{proof}

Throughout the remainder of the paper we adopt the convention that 
\[
\lambda_{1,2} = -\alpha \pm i \beta,
\]
are the stable and 
\[
\lambda_{3,4} = \alpha \pm i \beta,
\]
the unstable eigenvalues.  

\begin{remark}[Eigenvalues/eigenvectors] 
We compute eigenvectors for $Df(\hat x, 0, \hat y, 0)$ using the formulas 
established in Lemma \ref{lem:eigVectLemma}. 
The reader should be aware that in other problems, where explicit formulas for the 
eigenvalues/eigenvectors are not available, one can use existing 
validated numerical methods to compute the eigendata.  Such algorithms 
are standard for example in the IntLab package.    
See \cite{MR2652784}
for more complete discussion.
\end{remark}

\section{The parameterization method} \label{sec:parmMethod}
The parameterization method is a functional analytic framework 
for studying invariant manifolds of discrete and continuous time dynamical systems.
The method applies to both finite and infinite dimensional dynamical systems.  The  
works of \cite{MR1976079, MR1976080, MR2177465, MR2240743,MR2289544, MR2299977}
originally developed the method for stable/unstable manifolds in Banach 
spaces and for whiskered tori and their attached invariant manifolds.  
By now there is a substantial literature surrounding
the parameterization method, and a detailed survey is beyond the scope of 
the present work.  The interested reader can consult 
the book of \cite{mamotreto} on the topic, and there find many 
examples, applications, and a thorough overview of the literature.
In this section we give only a brief overview.

\begin{figure}[t!]
\center{
  \includegraphics[width=0.75\linewidth]{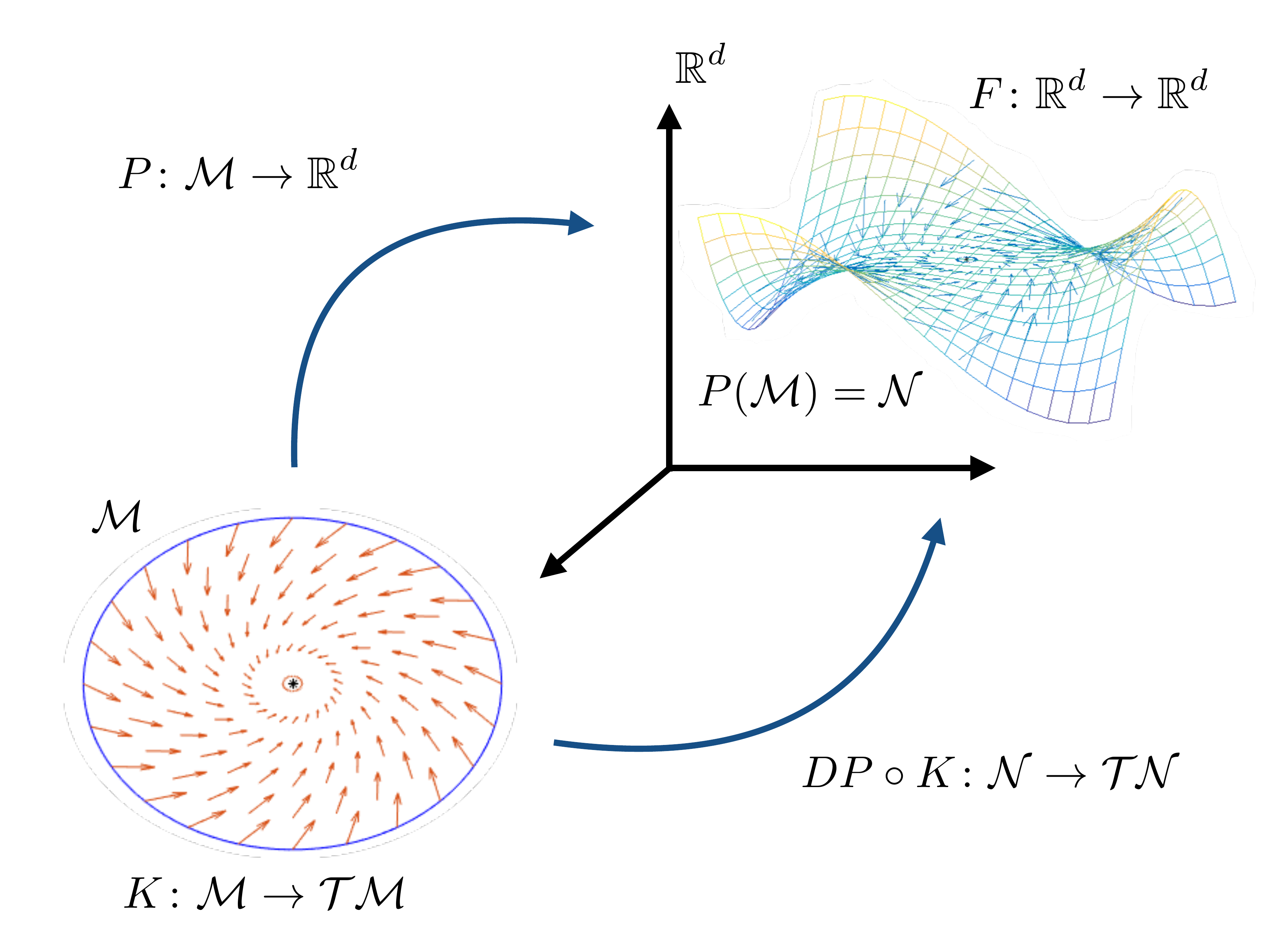}
}
\caption{
\textbf{Schematic illustration of the parameterization method}: here 
$\mathcal{M}$ is a smooth manifold and $P$ is an embedding,
so that $\mbox{image}(P) = \mathcal{N}$ is an embedded manifold in $\mathbb{R}^d$.
Any vector field $K$ on $\mathcal{M}$ is pushed forward by $DP$, so that 
$DP \circ K$ is a vector field on $\mathcal{N}$.  If the push forward vector field 
is equal to the vector field $F$ on $\mathcal{N}$, then 
$F$ is tangent to $\mathcal{N}$ and the two vector fields
generate the same dynamics.  In particular, $\mathcal{N}$ is an invariant manifold
whose dynamics are conjugate to those of $K$. 
} \label{fig:infConj}
\end{figure}

So, consider a smooth vector field
$F \colon \mathbb{R}^d \to \mathbb{R}^d$.  We are interested in 
the dynamics generated by $F$, and in particular we want to study smooth invariant manifolds
for the differential equation $x' = F(x)$ (or even manifolds with boundary 
which are only forward/backward invariant).
Let $\mathcal{M}$ be a smooth manifold and $P \colon \mathcal{M} \to \mathbb{R}^d$  
an embedding, so that $\mathcal{N} := P(\mathcal{M})$ is a smooth embedded 
manifold in $\mathbb{R}^d$.  
If $\mathcal{N}$ is a manifold with boundary we ask that $F$ is inflowing/outflowing
on the boundary.  Now, we ask the question: when is $\mathcal{N}$ an 
invariant manifold for $F$?

Let $T\mathcal{M}$, $T\mathcal{N}$ denote the tangent bundles. Suppose that $\sigma \in \mathcal{M}$ and $x \in \mathcal{N}$.  
We write $T_\sigma \mathcal{M}$ and $T_x \mathcal{N}$ to denote the  tangent spaces based at $\sigma$ and $x$. For any $\sigma \in \mathcal{M}$ the differential $DP(\sigma)$ is a linear map from $T_\sigma \mathcal{M}$ to $T_{P(\sigma)} \mathcal{N}$.
This fact lets us push forward vector fields from $\mathcal{M}$ to $\mathcal{N}$.

Then let $K \colon \mathcal{M} \to T\mathcal{M}$ be a vector field on $\mathcal{M}$.   
Define the vector field $DP \circ K \colon \mathcal{N} \to T\mathcal{N}$ on the image of $P$
as follows.  For each $\sigma \in \mathcal{M}$, there is an attached 
vector $K(\theta) \in T_\sigma \mathcal{M}$.  Applying the 
differential gives $DP(\sigma) K(\sigma) \in T_{P(\sigma)} \mathcal{N}$.
Letting $\sigma$ vary, we have a vector field defined over $\mathcal{N}$.
Another vector field is defined on $\mathcal{N}$
by simply restricting $F$, and we are interested in reconciling the 
difference between these two fields.

Indeed, suppose that $P \colon \mathcal{M} \to \mathbb{R}^d$ satisfies the 
\textit{invariance equation}
\begin{equation} \label{eq:invEqAbstract}
DP \circ K = F \circ P.
\end{equation}
Then the push forward of $K$ is actually equal to the restriction of $F$ on $P$.
It follows from the fact that the push forward of $K$
is tangent to $P$, that the restriction of $F$ to $P$ is everywhere tangent to
the image of $P$.  Hence, the image of $P$ is locally flow invariant.  
The situation is illustrated in Figure \ref{fig:infConj}.

To make this discussion more quantitative we introduce 
coordinates $\sigma = (\sigma_1, \ldots, \sigma_m)$
on $\mathcal{M}$, so that $K_1(\sigma)$, $\ldots$, $K_m(\sigma)$ 
are the components of the vector field $K$.
The coordinates for the push forward are
\[
(D P \circ K)(\sigma) = K_1(\sigma)\frac{\partial}{\partial \sigma_1} P(\sigma) + \ldots
+ K_m(\sigma) \frac{\partial}{\partial \sigma_m} P(\sigma),
\]   
while the restriction in coordinates is
\[
\left(F \left|_{\mathcal{N}} \right. \right)(\sigma)  = F(P(\sigma)).
\]
In coordinates this invariance equation \eqref{eq:invEqAbstract} becomes 
\begin{equation} \label{eq:invEqGeneral}
K_1(\sigma)\frac{\partial}{\partial \sigma_1} P(\sigma) + \ldots
+ K_m(\sigma) \frac{\partial}{\partial \sigma_m} P(\sigma) = F(P(\sigma)),
\end{equation}
for $\sigma \in \mathcal{M}$.  
Solutions of the invariance equation have several desirable properties.   
\begin{itemize}
\item $P$ maps orbits 
of $K$ on $\mathcal{M}$ to orbits of $F$ on $\mathcal{N}$.  Or to put it another way,
the infinitesimal conjugacy of Equation \eqref{eq:invEqGeneral} generates a 
flow conjugacy on the manifolds. This is made more precise in the examples 
in the next sections.  Because of this fact, the manifold $\mathcal{M}$ is referred to 
as the model space and the vector field $K$ as the model dynamics.  
\item There is no requirement that $P$ be the graph of a function, 
hence the parameterization can follow folds in the embedding.
\item  Equation \eqref{eq:invEqGeneral} is a first order nonlinear system of PDEs,
and provides a quantitative approach to the study of invariant manifolds.
Note for example that in contrast to more standard approaches to invariant 
manifolds -- like the graph transform method,
the sequence space approach of Irwin, and Lyapunov-Perron method -- 
Equation \eqref{eq:invEqGeneral} does not involve the flow generated by 
$F$.  Only the vector fields appear in the equation and $P$ is the only quantity not 
explicitly known.    
\item The fact that $P$ solves a PDE is useful for both implementing numerical methods
and for developing a-posteriori analysis.  
\end{itemize}

\subsection{Stable/Unstable Manifolds Attached to Equilibria} \label{sec:parameterization}
We now specialize the general parameterization method discussed 
above to the case of a two dimensional 
stable manifold attached to an equilibrium solution. 
Vector space norms used in the present 
work are discussed in detail in Appendix \ref{sec:norms}.
In this section  $z = x + i y$ denotes a complex number, and $|z| = \sqrt{x^2 + y^2}$
denotes the usual complex absolute value.  Let 
\[
D := \left\{ \sigma = (z_1, z_2) \in \mathbb{C}^2 \, \colon |z_{1}|, |z_2| < 1 \right\},
\]
denote the open unit polydisc centered at the origin in $\mathbb{C}^2$.

Given an open set $U \subset \mathbb{C}^d$ and an analytic vector field 
$F \colon U \to \mathbb{C}^d$, suppose that $\mathbf{x}_0 \in U$
is an equilibrium solution for $F$.   We are interested in saddle-focus equilibria, 
so (focusing for a moment on the stable case) let us assume that $DF(\mathbf{x}_0)$ has 
a complex pair of eigenvalues $\lambda_1, \lambda_2 \in \mathbb{C}$ with 
$\lambda_2 = \overline{\lambda_1}$ and $\mbox{real}(\lambda_{1,2}) < 0$.
Assume in addition that none of the other eigenvalues for $DF(\mathbf{x}_0)$ are stable,
so that $W^s(\mathbf{x}_0)$ is two dimensional.  Let $\mathbf{\xi}_1, \mathbf{\xi}_2 \in \mathbb{C}^d$
be eigenvectors associated with $\lambda_1, \lambda_2$.

Following the discussion of the previous section, we choose $\mathcal{M} = D$ as
the model space for the stable manifold 
and define the model vector field $K \colon D \to \mathbb{C}^2$ by  
\[
K(\mathbf{z}) := \Lambda \mathbf{z}, \quad \quad \quad \mathbf{z} \in D,
\]
where $\mathbf{z} = (z_1, z_2)$, and 
\[
\Lambda = \left(
\begin{array}{cc}
\lambda_1 & 0 \\
0 & \lambda_2 
\end{array}
\right).
\]
Plugging these choices into Equation \eqref{eq:invEqGeneral}
we see that a function $P \colon D \to \mathbb{C}^d$ parameterizing the 
stable manifold should satisfy 
\begin{equation}\label{eq:invEqnFlows1}
 \lambda_1 z_1 \frac{\partial}{\partial z_1} P(z_1, z_2) + 
 \lambda_2 z_2 \frac{\partial}{\partial z_2} P(z_1, z_2) = F(P(z_1, z_2)),
\end{equation}
for all $(z_1, z_2) \in D$.
We impose the first order constraints
\begin{equation} \label{eq:firstOrder1}
P(0, 0) = \mathbf{x}_0, 
\quad \quad \quad \quad
\frac{\partial}{\partial z_1} P(0, 0) = s \mathbf{\xi}_1, \quad \quad \quad \quad 
 \mbox{and} \quad \quad \quad \quad 
\frac{\partial}{\partial z_2} P(0, 0) = s \mathbf{\xi}_2,
\end{equation}
with $|s| > 0$ an arbitrary nonzero scaling.  
The following Lemma, 
whose proof is included for the sake of completeness,
establishes the conjugacy alluded to in the previous section. 
The meaning of the lemma is illustrated in Figure \ref{fig:conjugacyFP}.

\begin{figure}[t!]
\center{
  \includegraphics[width=0.6\linewidth]{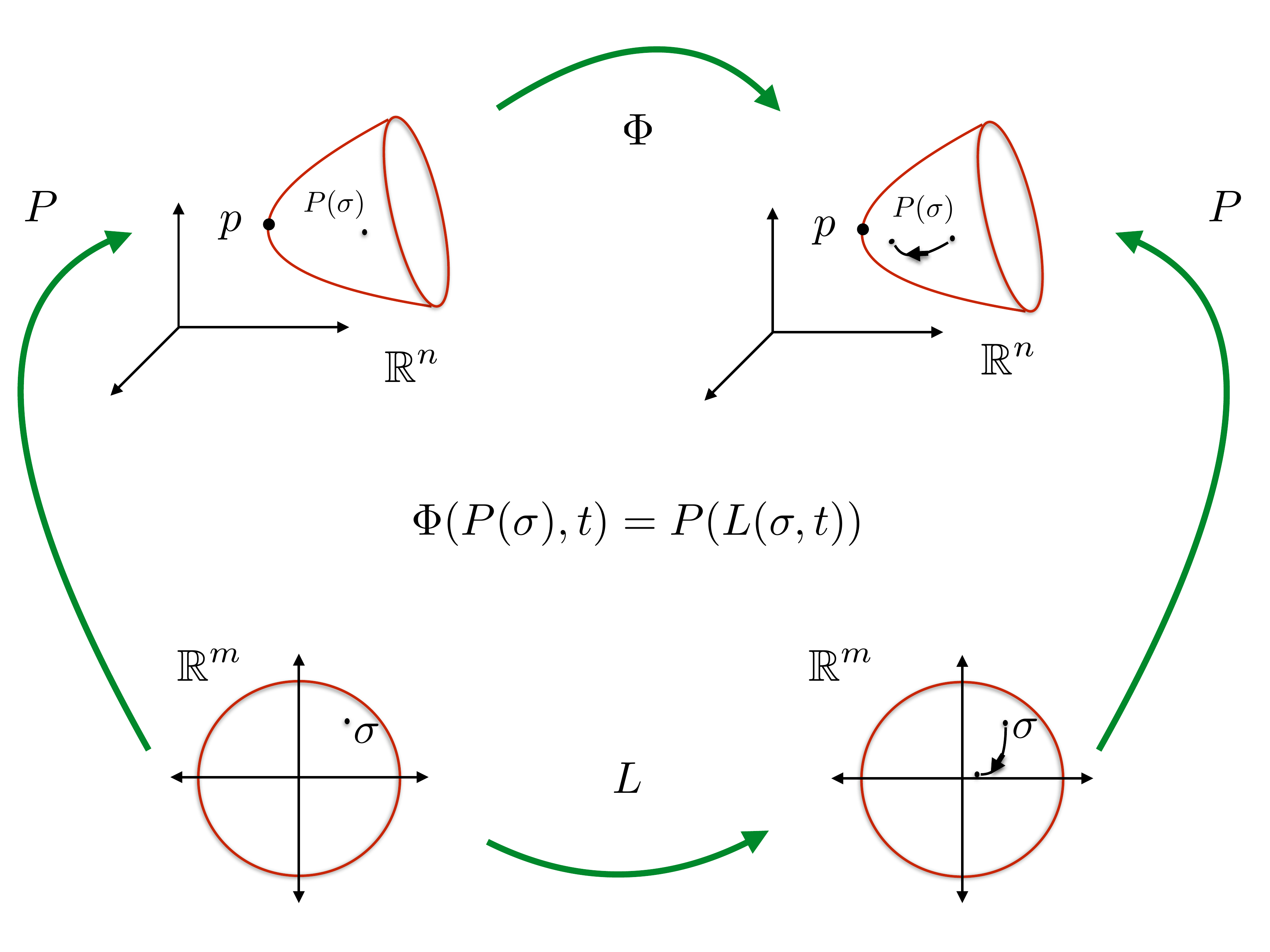}
}
\caption{Flow conjugacy: if $P$ solves Equation \eqref{eq:invEqnFlows1} then 
orbits of the linearized system are mapped to orbits of $f$, and the diagram 
above commutes.} \label{fig:conjugacyFP}
\end{figure}

\begin{lemma}[Flow conjugacy for stable manifolds] \label{lem:flowConj} {\em
Suppose that $P \colon D \to \mathbb{C}^d$ is a smooth solution of 
the invariance equation  \eqref{eq:invEqnFlows1}.  Then 
\begin{equation} \label{eq:flowInvSM}
\Phi(P(z_1, z_2), t) = P(e^{\lambda_1 t} z_1, e^{\lambda_2 t} z_2), 
\end{equation}
for all $(z_1, z_2) \in D$ and all $t \geq 0$.  }
\end{lemma}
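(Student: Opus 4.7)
The plan is to prove equality of two curves by showing they satisfy the same initial value problem, and then invoking uniqueness of solutions to the ODE generated by $F$.

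Fix $(z_1, z_2) \in D$ and define two curves
\[
\gamma_1(t) := \Phi(P(z_1, z_2), t), \quad\quad
\gamma_2(t) := P(e^{\lambda_1 t} z_1, e^{\lambda_2 t} z_2).
\]
The first step is to check that $\gamma_2$ is well-defined for all $t \geq 0$. Since $\mathbf{x}_0$ is a saddle-focus with $\mathrm{Re}(\lambda_{1,2}) < 0$, we have $|e^{\lambda_j t} z_j| = e^{\mathrm{Re}(\lambda_j) t} |z_j| \leq |z_j| < 1$ for all $t \geq 0$. Hence the argument of $P$ remains in $D$ and $\gamma_2 \colon [0, \infty) \to \mathbb{C}^d$ is smooth. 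This is precisely why the conjugacy holds only in forward time.

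Next I would verify that both curves satisfy the same initial condition and the same ODE. For the initial condition, $\gamma_1(0) = P(z_1, z_2) = \gamma_2(0)$ is immediate. For $\gamma_1$, the definition of the flow generated by $F$ gives $\gamma_1'(t) = F(\gamma_1(t))$. For $\gamma_2$, the chain rule yields
\[
\gamma_2'(t) = \lambda_1 e^{\lambda_1 t} z_1 \,\partial_{z_1} P(e^{\lambda_1 t} z_1, e^{\lambda_2 t} z_2) + \lambda_2 e^{\lambda_2 t} z_2 \,\partial_{z_2} P(e^{\lambda_1 t} z_1, e^{\lambda_2 t} z_2).
\]
Now I would apply the invariance equation \eqref{eq:invEqnFlows1} evaluated at the point $(e^{\lambda_1 t} z_1, e^{\lambda_2 t} z_2) \in D$, which converts the right-hand side above into $F(P(e^{\lambda_1 t} z_1, e^{\lambda_2 t} z_2)) = F(\gamma_2(t))$.

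Finally, since $F$ is analytic and in particular locally Lipschitz, uniqueness of solutions to the initial value problem $\gamma' = F(\gamma)$, $\gamma(0) = P(z_1, z_2)$ forces $\gamma_1 \equiv \gamma_2$ on their common interval of existence. The curve $\gamma_2$ exists for all $t \geq 0$ by the first step, so $\gamma_1$ extends to $[0, \infty)$ as well, and the identity \eqref{eq:flowInvSM} holds. I do not anticipate any real obstacle here; the only subtlety worth emphasizing is the verification that the $D$-argument of $P$ does not escape the polydisc under the linear model flow, which is what singles out forward time and uses the stability hypothesis $\mathrm{Re}(\lambda_{1,2}) < 0$.
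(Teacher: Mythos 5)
Your proposal is correct and follows essentially the same route as the paper: define the curve $t \mapsto P(e^{\lambda_1 t} z_1, e^{\lambda_2 t} z_2)$, check it stays in $D$ using $\mathrm{Re}(\lambda_{1,2}) < 0$, differentiate via the chain rule and the invariance equation to see it solves $\gamma' = F(\gamma)$ with the right initial condition, and identify it with the flow. Your explicit appeal to uniqueness of solutions is the same step the paper makes implicitly when equating the curve with $\Phi(P(z_1,z_2),t)$.
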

\begin{proof}
Choose $(z_1, z_2) \in D$, and define the curve $\gamma \colon [0, \infty) \to \mathbb{C}^d$
by 
\[
\gamma(t) = P(e^{\lambda_1 t} z_1, e^{\lambda_2 t} z_2).
\]
Note that $\gamma$ is well defined, as  
\begin{equation} \label{eq:goodDom}
(e^{\lambda_1 t} z_1, e^{\lambda_2 t} z_2) \in D,
\quad \quad \mbox{ for all } t \in [0, \infty).
\end{equation}
This last fact exploits the hypothesis that $\mbox{real}(\lambda_{1,2}) < 0$, 
that $\lambda_1, \lambda_2$ are complex conjugates, and that $D$ is a polydisc 
It is also clear that $\gamma(0) = P(z_1, z_2)$.
 
The inclusion of Equation \eqref{eq:goodDom}, 
combined with the hypothesis that $P$ satisfies Equation \eqref{eq:invEqnFlows1} 
on $D$ for all $t > 0$, give that 
\[
F\left(P(e^{\lambda_1 t} z_1, e^{\lambda_2 t} z_2)\right)
= 
\]
\[
\lambda_1 \left(e^{\lambda_1 t} z_1\right) \frac{\partial}{\partial z_1} P(e^{\lambda_1 t} z_1, e^{\lambda_2 t} z_2)
+ \lambda_2 \left( e^{\lambda_2 t} z_2 \right)  \frac{\partial}{\partial z_2} P(e^{\lambda_1 t} z_1, e^{\lambda_2 t} z_2). \label{eq:parmProof1}
\]
But then 
\begin{align*}
\frac{d}{dt} \gamma(t)  &= \frac{d}{dt} P(e^{\lambda_1 t} z_1, e^{\lambda_2 t} z_2) \\
&= D P(e^{\lambda_1 t} z_1, e^{\lambda_2 t} z_2) \left(
\begin{array}{c}
\lambda_1 e^{\lambda_1 t} z_1 \\
\lambda_2 e^{\lambda_2 t} z_2
\end{array} 
\right) \\
&= \lambda_1 \left(e^{\lambda_1 t} z_1\right) \frac{\partial}{\partial z_1} P(e^{\lambda_1 t} z_1, e^{\lambda_2 t} z_2)
+ \lambda_2 \left( e^{\lambda_2 t} z_2 \right)  \frac{\partial}{\partial z_2} P(e^{\lambda_1 t} z_1, e^{\lambda_2 t} z_2) \\
&=  F(P(e^{\lambda_1 t} z_1, e^{\lambda_2 t} z_2)) \\
&= F(\gamma(t)), 
\end{align*}
where we use Equation \eqref{eq:parmProof1} to pass from the third to the fourth line.  
So $\gamma$ solves the initial value problem $\gamma' = F(\gamma)$ with $\gamma(0) = P(z_1, z_2)$, 
which is to say that 
\[
\Phi(P(z_1, z_2), t) = \gamma(t) = P(e^{\lambda_1 t} z_1, e^{\lambda_2 t} z_2),
\]
for all $t \geq 0$.  Since $(z_1, z_2) \in D$ was arbitrary we have the result.  
\end{proof}

Examining the conjugacy of Lemma \ref{lem:flowConj} 
explains why the image of $P$ is a local stable manifold, as
for any $(z_1, z_2) \in D$ we have that 
\begin{align}
\lim_{t \to \infty} \Phi(P(z_1, z_2), t) &= \lim_{t \to \infty} P(e^{\lambda_1 t} z_1, e^{\lambda_2 t} z_2) \nonumber \\
& = P(0,0)  \nonumber \\
& = \mathbf{x}_0,  \label{eq:parmLimit}
\end{align}
by the continuity of $P$,
where the passage to the last line again evokes the first order constraints of Equation \eqref{eq:firstOrder1}.
In other words, for all $(z_1,z_2) \in D$, we have
\[
P(z_1, z_2) \in W^s(\mathbf{x}_0).
\]
All the comments in the this section apply to unstable manifolds by reversing time.

\begin{remark}[Flow conjugacy] 
Define the liner flow $L \colon \mathbb{R} \times D \to D$ by  
\begin{equation} \label{eq:defLinFlow_L}
\mathcal{L}(t, z_1, z_2) = 
\left(
\begin{array}{c}
e^{\lambda_1 t} z_1 \\
e^{\lambda_2 t} z_2
\end{array}
\right).
\end{equation}
Then Equation \eqref{eq:flowInvSM} is viewed as a conjugacy between 
flows.  That is, the equation says that 
\[
\left(\Phi \circ P \right) (t, z_1, z_2)= \left(P \circ \mathcal{L} \right) (t, z_1, z_2),
\]
for $(z_1, z_2) \in D$ and all $t \geq 0$.
\end{remark}

\begin{remark}[Real analytic vector fields and the real image of the parameterization] \label{rem:realImage}
We look for solutions $P$ of Equation \eqref{eq:invEqnFlows1} with the property that 
\[
P(z, \bar z) \in \mathbb{R}^d.
\]
That is we seek parameterizations which, when restricted to complex conjugate
variables, are real valued.  Note that  an analytic function $P$ has that
$P(z, \bar z) \in \mathbb{R}^d$ for all $z \in \mathbb{C}$ 
if and only if the Taylor coefficients $a_{mn} \in \mathbb{C}^d$  
satisfy the symmetry condition
\[
a_{nm} = \overline{a}_{mn},
\] 
for all $(m,n) \in \mathbb{N}^2$.
If $\lambda_1, \lambda_2$ are complex conjugates
and if $F$ is real analytic, then we can arrange that the power series coefficient of $P$ 
have this property. 
Since the solution $P$ is unique up to the choice of the eigenvectors, 
only the eigenvectors can effect this outcome.  It turns out that choosing 
$s \mathbf{\xi}_2 = s \overline{\mathbf{\xi}_1}$ gives the desired result.
See Remark \ref{rem:realConj} below for more detailed discussion
\end{remark}

\begin{remark}[Real parameterization and real flow conjugacy] \label{rem:realConj}
Suppose, as discussed in Remark \ref{rem:realImage}, that $P \colon D \to \mathbb{C}^d$ 
is a solution of Equation \eqref{eq:invEqnFlows1} with the 
property that $P(z, \bar z) \in \mathbb{R}^d$ for all $z \in D$.
Let 
\[
B = \left\{(\sigma_1, \sigma_2) \in \mathbb{R}^2 : \sqrt{\sigma_1^2 + \sigma_2^2} < 1 \right\}.
\]
For $\sigma = (\sigma_1, \sigma_2) \in B$ define the complex conjugate variables
\[
z = \sigma_1 + i \sigma_2, 
\quad \quad \quad \mbox{and} \quad \quad \quad 
\bar z = \sigma_1 - i \sigma_2,
\]
and the mapping $Q \colon B \to \mathbb{R}^d$ by 
\[
Q(\sigma_1, \sigma_2) := P(\sigma_1 + i \sigma_2, \sigma_1 - i \sigma_2).
\]
Note that $Q$ is well defined and real valued, 
as $(\sigma_1 + i \sigma_2, \sigma_1 - i \sigma_2) \in D$ for all 
$(\sigma_1, \sigma_2) \in B$.

Now define the linear flow $L \colon \mathbb{R}^2 \times \mathbb{R} \to \mathbb{R}^2$ by
\begin{equation} \label{eq:def_linearFlow}
L(\sigma_1, \sigma_2, t) := e^{\alpha t} \left(
\begin{array}{cc}
\cos(\beta t) &  - \sin(\beta t) \\
\sin(\beta t) &  \cos(\beta t) 
\end{array}
\right)
\left[
\begin{array}{c}
\sigma_1 \\
\sigma_2
\end{array}
\right],
\end{equation}
and note that $L$
is the real flow which results from restricting 
Equation \eqref{eq:defLinFlow_L} to complex conjugate variables.

Then, for any $(\sigma_1, \sigma_2) \in B$ and any $t \geq 0$ we have 
\begin{align*}
\Phi\left(Q(\sigma_1, \sigma_2), t \right) &= \Phi \left( P(\sigma_1 + i \sigma_2, \sigma_1 - i \sigma_2), t  \right) \\
&= P(e^{\lambda_1 t} (\sigma_1 + i \sigma_2), e^{\lambda_2 t} (\sigma_1 - i \sigma_2)) \\
& = Q \left (\mbox{real}\left(e^{\lambda_1 t} (\sigma_1 + i \sigma_2) \right), 
                  \mbox{imag}\left( e^{\lambda_1 t} (\sigma_1 + i \sigma_2) \right) \right),
\end{align*}
and
\begin{equation} \label{eq:realConj}
\Phi\left(Q(\sigma_1, \sigma_2), t \right) = Q(L(\sigma_1, \sigma_2, t)), 
\end{equation}
for all  $(\sigma_1, \sigma_2) \in B$ is the resulting real conjugacy. 
\end{remark}

\begin{remark}[Outflowing/inflowing invariant manifolds with boundary] \label{rem:inflowOutflow}
Consider the curve 
\[
\mathbf{c}(s) = \left(
\begin{array}{c}
c_1(s) \\
c_2(s)
\end{array}
\right)
= 
R \left(
\begin{array}{c}
\cos(s) \\
\sin(s)
\end{array}
\right),
\]
with $0 < R < 1$, so that $\mbox{image}(\mathbf{c}) \subset B$.
We are interested in the restriction of the real linear vector field
to the curve $\mathbf{c}(s)$.
The matrix for the linear vector field is
\begin{equation} \label{eq:def_M}
M = \left(
\begin{array}{cc}
\alpha & - \beta \\
\beta &  \alpha 
\end{array}
\right),
\end{equation}
so that 
\begin{align*}
\left< \mathbf{c}(s), M \mathbf{c}(s) \right> 
& = \alpha R^2.
\end{align*} 
where $\left<\cdot,\cdot\right>$ is the Euclidean inner product. It follows that 
the angle $\theta$ between the vectors $\mathbf{c}(s)$ and $M \mathbf{c}(s)$ is given by 
\begin{align*}
\cos(\theta) 
& = \frac{\alpha}{ \sqrt{\alpha^2 + \beta^2}},
\end{align*}
and since $\mathbf{c}(s)$ is an outward normal 
the linear vector field is inflowing with respect to $\mathbf{c}(s)$ 
if $\alpha < 0$ and outflowing of $\alpha > 0$.  
We also note that if we replace the circular curve $\mathbf{c}(s)$ with a fine enough piecewise 
linear mesh of secant lines, then the same result holds.   
\end{remark}

\begin{remark}[Existence and uniqueness of solutions of Equation \eqref{eq:invEqnFlows1}]
Existence, uniqueness, and regularity questions concerning the parameterization method
for stable/unstable manifolds of equilibrium/fixed points are considered at length 
in \cite{MR1976079, MR1976080, MR2177465, MR2289544, MR2122688, MR2966749, parmPDE}.
For example, it can be shown that solutions (if they exist) are unique up to the choice of the eigenvector scalings, 
and that (if it exists) the parameterization $P$ is as smooth as $F$ -- so analytic in the present case.
Existence issues are more subtle, and involve certain \textit{non-resonance}
conditions between eigenvalues of like stability.  See \cite{MR1976079, MR2177465, mamotreto, myAMSnotes} 
for precise definition of the resonance conditions and fuller discussion.

At present we only remark that in the case of a two dimensional saddle-focus all subtleties concerning existence vanish.
This is because a single pair of complex conjugate eigenvalues cannot be resonant in the 
relevant sense.  For the manifolds studied in the present work we 
have the following: let $\xi_1, \xi_2$ denote some choice of eigenvectors, and $s \neq 0$.
Then there exists an $\epsilon > 0$ 
so that for all $|s| < \epsilon$ there exists a unique analytic solution $P \colon D \to \mathbb{C}^d$ of 
equation \eqref{eq:invEqnFlows1}, satisfying the linear constraints given in Equation \eqref{eq:firstOrder1}.
Note that in Equation \eqref{eq:firstOrder1}, $s \neq 0$ is the scaling of the eigenvectors.
In the analytic case the proof follows by the implicit function theorem, see \cite{MR2177465}.
\end{remark}

\begin{figure}[h!]
\center{
  \includegraphics[width=0.8\linewidth]{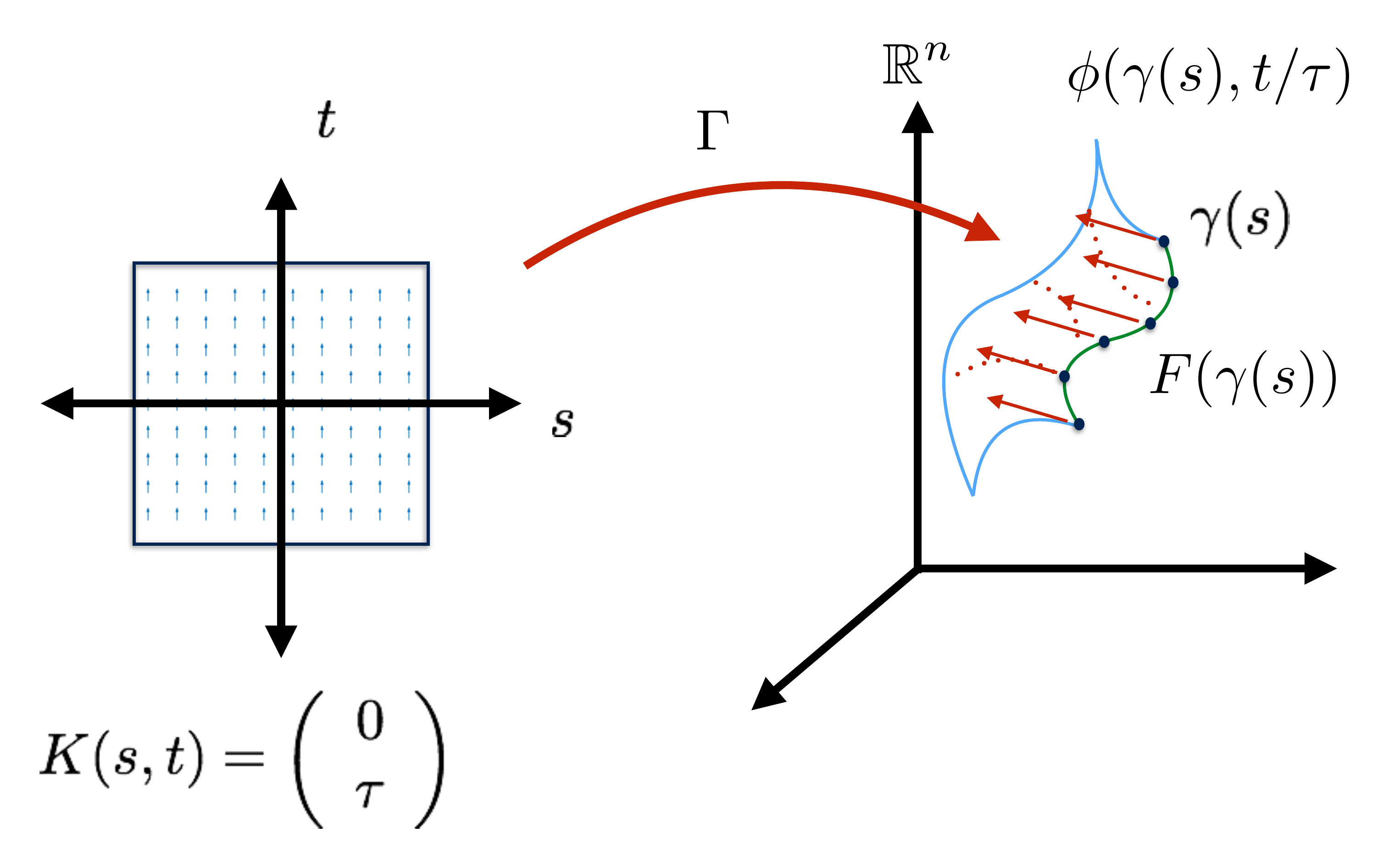}
}
\caption{\textbf{Parameterization of an advected curve:} here 
$\mathcal{M} = [-1,1]^2$ is our model manifold and the constant 
vector field $K(s,t) = (0, \tau)$ is the model dynamics.
Solving the invariance equation $DP \circ K = F(P)$ 
leads to a parameterization of the locally invariant manifold
given by advecting $\gamma$. The dynamics are conjugate to the 
flow box dynamics in $\mathcal{M}$.  That is  
$P(s,t) = \Phi(\gamma(s), t/\tau)$.} \label{fig:advectionSchematic}
\end{figure}

\subsection{Advection of transverse arcs/submanifolds}  \label{sec:growingTheManifold}
The parameterization method also describes the
locally invariant manifold generated by advecting a curve 
which is transverse to the vector field.  
More precisely, let $F \colon \mathbb{R}^d \to \mathbb{R}^d$ be a real analytic vector field and 
suppose that $\gamma \colon [-1, 1] \to \mathbb{R}^d$ is a real analytic curve.  
Assume further that $\gamma$ is transverse to the vector field $F$.
That is, suppose that
\[
\left< \gamma'(s), F \left(\gamma(s)\right) \right> \neq 0,
\]
for all $s \in [-1,1]$. (Actually it is reasonable to allow the inner product to vanish 
at at $\pm 1$ only if $F(\gamma(\pm 1)) = 0$).  Under this transversality condition,
the advected curve is a locally invariant manifold, which we wish to parameterize.

Choose $\mathcal{M} = [-1,1]^2$ and $\tau > 0$.  Define the vector field 
$K \colon [-1,1]^2 \to \mathbb{R}^2$ by 
\[
K(s,t) = \left(
\begin{array}{c}
0 \\
\tau
\end{array}
\right).
\]
That is , we take our model space to by the unit square and 
our model dynamics given by the two dimensional ``flow-box'' vector field.
We seek a parameterization $\Gamma \colon [-1,1]^2 \to \mathbb{R}^d$ 
satisfying the invariance equation \eqref{eq:invEqGeneral} 
with these choices of $\mathcal{M}$ and $K$.
This gives 
\begin{equation} \label{eq:invEqAdvArc}
\tau \frac{\partial}{\partial t} \Gamma(s,t) = F(\Gamma(s,t)), 
\end{equation}
subject to the initial condition $\Gamma(s, 0) = \gamma(s)$.

This equation simply re-states that $\Gamma$ solves the initial 
value problem with initial conditions on  $\gamma$.  
But considering the conjugacy with the flow-box vector 
field is useful, as it guarantees that the parameterization 
is well aligned with the flow in the sense discussed in 
Section \ref{sec:ideaOfTheProof}.

More precisely, by imitating the proof of Lemma \ref{lem:flowConj}, 
one checks that $\Gamma$ satisfies the flow conjugacy 
\[
\Gamma(s, t) = \Phi(\gamma(s), t/\tau), 
\]
for $(s,t) \in (-1,1)^2$.
Now choose any $t_1, t_2 \in (-1,1)$ having that $t_1 + t_2 \in (-1,1)$.
Then
\begin{align}
\Phi(\Gamma(s, t_1),   t_2/ \tau) &= \Phi(\Phi(\gamma(s), t_1/ \tau),  L t_2) \nonumber \\
&= \Phi(\gamma(s), (t_1 + t_2)/ \tau) \nonumber \\
&= \Gamma(s, t_1 + t_2) \nonumber,
\end{align}
and $\Gamma$ satisfies Equation \eqref{eq:wellAlligned}.

\begin{figure}[h!]
\center{
  \includegraphics[width=0.8\linewidth]{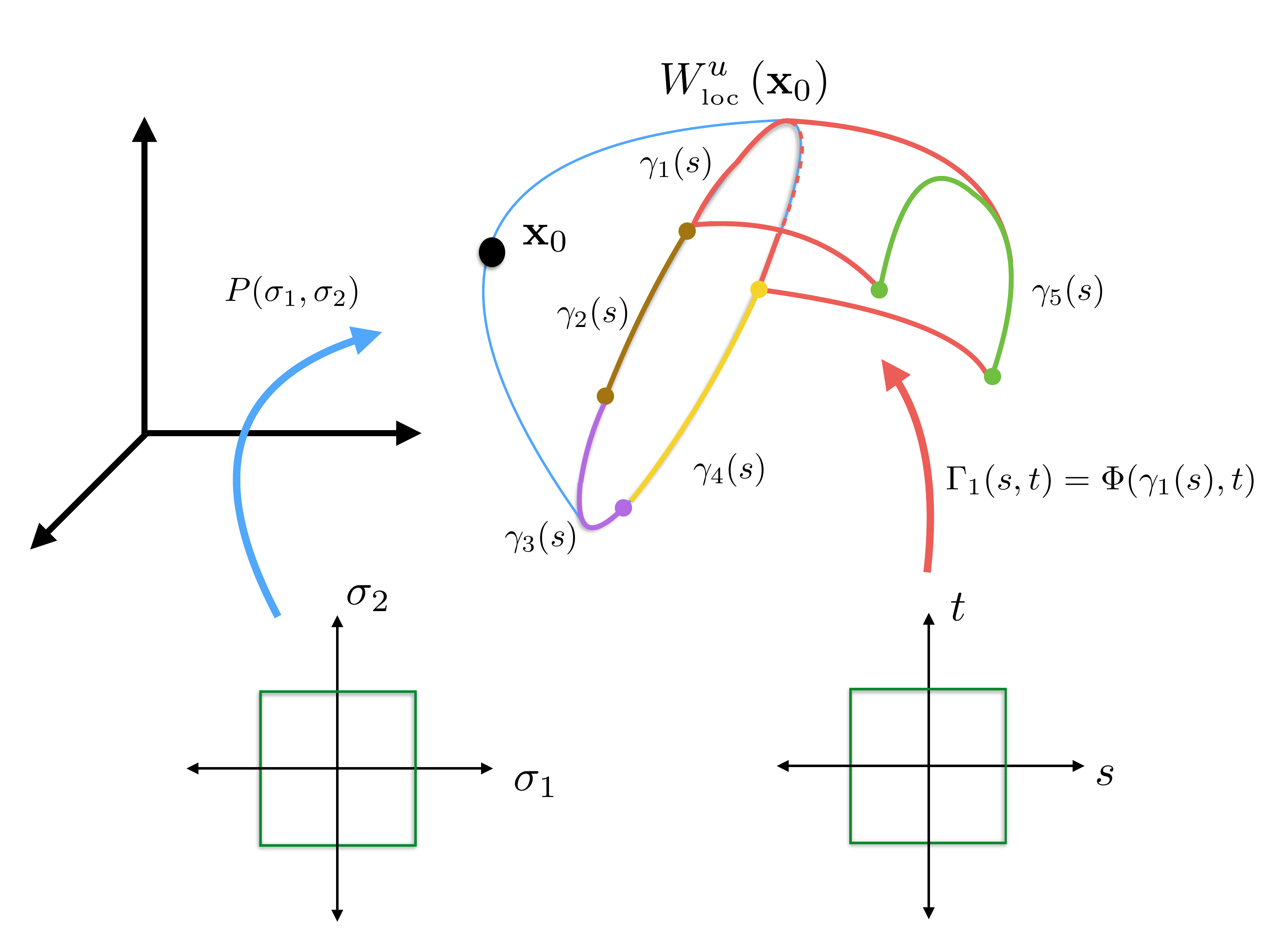}
}
\caption{The figure provides a schematic rendering of the two kinds of 
charts used on our method.  Here $P$ is the local patch containing the 
fixed point.  This chart is computed and analyzed using the Parameterization
method discussed in Section \ref{sec:parameterization}.  
The boundary of the image of $P$ is meshed into a number of 
boundary arcs $\gamma_j(s)$.  A large local manifold is ``grown'' by 
advecting these boundary arcs.  This results in the patches $\Gamma_j(s,t)$
which describe the manifold far from the equilibrium point.} \label{fig:introSchematic}
\end{figure}

\subsection{Growing the atlas by advecting boundary arcs}

Consider an equilibrium point $\mathbf{x}_0 \in \mathbb{R}^d$ 
for the smooth vector field $F$.  Let 
$B \subset \mathbb{R}^2$ denote the open unit disk in the plane, and
suppose that  
$P \colon B  \to \mathbb{R}^d$ is a real valued parameterization of a
local (un)stable manifold attached to $\mathbf{x}_0$.  
In fact, suppose that $P$ is as discussed in Section \ref{sec:parameterization},
so that the dynamics on the stable(unstable) manifold are 
conjugate to the dynamics on $B$ generated by the vector field 
$M$ defined in Remark \ref{rem:inflowOutflow}.
In particular, suppose that the vector field is inflowing(outflowing)
on the boundary of $B$, as discussed in Remark \ref{rem:inflowOutflow}.

Let $\mathbf{c}_j \colon [-1, 1] \to B$, $1 \leq j \leq M$ denote a system of 
real analytic arcs, continuous on the closed interval, and that $\mathbf{c}_1(-1) = \mathbf{c}_M(1)$.
That is, suppose that $C := \cup_{j = 1}^M c_j([-1,1])$ is a closed loop.  Assume moreover that 
$C$ has no self intersections, and has winding number $1$ with respect to the origin in $B$.
Finally,  suppose that the vector field generated by $M$ 
is nowhere tangent to $C$.  That is we assume that the vector field is 
inflowing/outflowing with respect to $C$, so that the interior of the curve
$C$ defines a fundamental domain for the stable/unstable manifold.

Then 
\[
\gamma = \bigcup_{j = 1}^M P(c_j(s)),
\]
is an outflowing/inflowing boundary for a local unstable/stable manifold of $\mathbf{x}_0$,
so that each of the sub arcs $P \circ c_j$ are everywhere transverse to the 
vector field.   
Advecting each of the curves $\gamma_j(s) = P(c_j(s))$
we obtain manifold patches 
\[
\Gamma_j(s,t) = \Phi(\gamma_j(s), t).
\]
Taking 
\[
P(B) \cup \bigcup_{j=1}^M \Gamma_j([-1,1]^2), 
\]
we obtain an analytic continuation of the local unstable manifold.  
The idea is illustrated in Figure \ref{fig:introSchematic}.

Of course now a new boundary system of arcs is giving by ``collapsing'' the 
time variable in $\Gamma$, that is we define
\[
\gamma_j^1(s) = \Gamma_j(s, 1),
\]
for $j = 1, \ldots, M$.  This new system of arcs is again the boundary of a 
local unstable manifold for $\mathbf{x_0}$, and the procedure is started again from this
new initial data.  Proceeding in this way we systematically ``grow'' a larger
and larger local portion of the unstable manifold of $\mathbf{x}_0 $.  

The procedure can be repeated indefinitely, as long as the 
arcs remain in $U$,  because  the existence and uniqueness 
of solution curves rules out the development of a tangency 
between the vector field and the curves $\gamma$.

\begin{remark}[Remeshing]
During the procedure just described, it sometimes happens that the arc
$\Gamma_j(s, 1)$ is much longer, or has much higher curvature, than the 
initial arc $\gamma_j(s)$.  In this case it is desirable to cut $\gamma_j$ up 
into an appropriate number of smaller arcs and try again.  This remeshing 
 is the most delicate part of the procedure, but it is also much closer to 
 numerical analysis.  We use the adaptive scheme developed in
  \cite{manifoldPaper1}, and refer the interested reader to that reference for more
detailed discussion of these considerations.  
\end{remark}

\section{Formal series expansions} \label{sec:formalComputations}
In this section we derive the polynomial vector field resulting from automatic differentiation
of the CRFBP, and develop formal series expansions for the stable/unstable manifolds
as well as the advected boundary arcs.  

\subsection{A polynomial problem related to the CRFBP: Automatic Differentiation}
\label{sec:poly}The idea of the automatic differentiation is to introduce new variables 
\[
u_5 = \frac{1}{r_1}, \quad
u_6 = \frac{1}{r_2}, \quad
\text{and} \quad
u_7 = \frac{1}{r_3},
\]
where the $r_j$, for $j = 1,2,3$ are as defined in Equation \eqref{eq:def_r}.  
The new variables capture the non-polynomial nonlinearity of the CRFBP, and 
are incorporated into the vector field by observing that
\begin{align*}
u_5' &= \frac{-1}{r_1^2} r_1' \\
&= \frac{-1}{r_1^2}\frac{d}{dt} \sqrt{(x(t)-x_1)^2 + (y(t) - y_1)^2}
\\
&=  \frac{-1}{r_1^2}
\frac{\frac{d}{dt}(x(t)-x_1)^2 + (y(t) - y_1)^2}{2\sqrt{(x(t)-x_1)^2 + (y(t) - y_1)^2}}
\\
& = - (u_1 - x_1)u_2 u_5^3 - (u_3 - y_1)u_4 u_5^3.
\end{align*}
Similar calculations show that differentiating $r_6$ and $r_7$ leads to the
equations
\begin{align*}
u_6' &= - (u_1 -x_2) u_2 u_6^3  - (u_3-y_2)  u_4 u_6^3,
\\
u_7' &= - (u_1-x_3) u_2 u_7^3  - (u_3-y_3) u_4 u_7^3.
\end{align*}
Expressed in these new variables the partials of $\Omega$ are
\begin{align*}
\Omega_x &=
u_1 - m_1(u_1 - x_1) u_5^3 - m_2(u_1 - x_2) u_6^3 - m_3(u_1 - x_3) u_7^3,
\\
\Omega_y &= 
 u_3 - m_1(u_3 - y_1)u_5^3 - m_2(u_3 - y_2) u_6^3 - m_3(u_3 - y_3) u_7^3,
\end{align*}
and letting  $\mathbf{u} = (u_1, u_2, u_3, u_4, u_5, u_6, u_7) \in \mathbb{C}^7$ denote
the new vector of variables, we study the new vector field 
\[
\mathbf{u}' = F(\mathbf{u}),
\]
where $F \colon \mathbb{C}^7 \to \mathbb{C}^7$ is the 
fifth order polynomial given by 
\[
F(u_1, u_2, u_3, u_4, u_5, u_6, u_7) = 
\]
\begin{equation} \label{eq:bigPoly}
\left(
\begin{array}{c}
u_2 
\\
2u_4 + u_1 - m_1 u_1u_5^3 + m_1 x_1 u_5^3 - m_2 u_1 u_6^3 + m_2 x_2  u_6^3 - m_3 u_1 u_7^3  + m_3 x_3 u_7^3 
\\
u_4
\\
-2 u_2 + u_3 - m_1 u_3  u_5^3  + m_1 y_1  u_5^3 - m_2  u_3 u_6^3  + m_2 y_2  u_6^3 - m_3 u_3 u_7^3 + m_3 y_3 u_7^3
\\
 - u_1 u_2 u_5^3 + x_1 u_2 u_5^3 -  u_3 u_4 u_5^3 + y_1 u_4 u_5^3
\\
 - u_1 u_2 u_6^3  +x_2  u_2 u_6^3  -  u_3   u_4 u_6^3   + y_2  u_4 u_6^3
\\
 - u_1 u_2 u_7^3  + x_3 u_2 u_7^3  - u_3  u_4 u_7^3  + y_3 u_4 u_7^3  
\end{array}
\right).
\end{equation}
The dynamics of $f$ and $F$ are of course not equivalent.  For example $f$ 
has singularities while $F$ is entire.  Nevertheless, in  
Appendix \ref{sec:polyFacts} we show that by 
restricting to an appropriate four dimensional 
sub-manifold of $\mathbb{R}^7$, we recover the dynamics of $f$ 
from those of $F$.  First we focus on formals series methods
for the polynomial field $F$.


\subsection{Stable/unstable manifold of a saddle-focus}
Let $\mathbf{u}_0 \in \mathbb{R}^7$ be an equilibrium solution for $F$,
$\lambda_1, \lambda_2 \in \mathbb{C}$ be eigenvalues for $DF(\mathbf{u}_0)$, and 
$\mathbf{v}_1, \mathbf{v}_2 \in \mathbb{C}^7$ be associated 
eigenvectors.
We look for a  formal power series 
\[
P(z_1, z_2) = \sum_{m=0}^\infty \sum_{n = 0}^\infty a_{mn} z_1^m z_2^n, 
\]
with $a_{mn} \in \mathbb{C}^7$ for all $m, n \in \mathbb{N}$, 
such that $P$ is a solution of Equation Equation \eqref{eq:invEqnFlows1}.  
The first order constraints given in Equation \eqref{eq:firstOrder1} are satisfied by taking
\[
a_{00} = \mathbf{u}_0, \quad \quad \quad 
a_{10} = \mathbf{v}_1, \quad \quad \quad 
\mbox{and} \quad \quad \quad 
a_{01} = \mathbf{v}_2.
\]
The higher order Taylor coefficients of $P$ are found by  
expanding both sides of the Equation \eqref{eq:invEqnFlows1}
as a power series.   

Expanding the left hand side of Equation \eqref{eq:invEqnFlows1} 
as a power series gives  
\begin{equation} \label{eq:formalSerLHS}
\lambda_1 z_1 \frac{\partial}{\partial z_1} P(z_1, z_2) + \lambda_2 z_2 \frac{\partial}{\partial z_2} P(z_1, z_2) = 
\sum_{m=0}^\infty \sum_{n=0}^\infty (m \lambda_1 + n \lambda_2) a_{mn} z_1^m z_2^n.
\end{equation}
For the right hand side of  Equation \eqref{eq:invEqnFlows1}, we study the composition $F(P(z_1, z_2))$
on the level of power series. To this end let 
\[
a_{mn} = \left(
p_{mn},  
q_{mn},  
r_{mn},  
s_{mn},  
u_{mn}, 
v_{mn},  
w_{mn}
\right),
\]  
denote the components of the Taylor coefficients, and $b_{mn}$ denote the 
power series coefficients of the composition
\begin{equation} \label{eq:formalSerRHS}
F(P(z_1, z_2)) = \sum_{m=0}^\infty \sum_{n=0}^\infty b_{mn} z_1^m z_2^n,
\end{equation}
Exploiting the formula for the Cauchy product of power series gives
\[
b_{mn} =
\]
\scalebox{.60}{
$ 
\left(
\begin{array}{c}
q_{m, n}       \\
2 s_{mn} + p_{mn} - m_1 (p * u * u * u)_{mn} + m_1 x_1 (u*u*u)_{mn}  
  - m_2 (p * v*v*v)_{mn}   + m_2 x_2  (v*v*v)_{mn} - m_3 (p *w*w*w)_{mn}  + m_3 x_3 (w*w*w)_{mn}  \\
s_{mn}     \\
-2 q_{mn} + r_{mn} - m_1 (r * u*u*u)_{mn} + m_1 y_1 (u*u*u)_{mn}    
 - m_2 (r*v*v*v)_{mn}  + m_2 y_2 (v*v*v)_{mn} - m_3 (r*w*w*w)_{mn} + m_3 y_3 (w*w*w)_{mn}  \\
 - (p*q*u*u*u)_{mn}  +  x_1 (q *u*u*u)_{mn}   - (r*s*u*u*u)_{mn} + y_1 (s*u*u*u)_{mn}   \\
  - (p*q*v*v*v)_{mn}  + x_2 (q *v*v*v)_{mn} - (r*s*v*v*v)_{mn} + y_2 (s*v*v*v)_{mn}       \\
 - (p*q*w*w*w)_{mn}  + x_3 (q *w*w*w)_{mn} - (r*s*w*w*w)_{mn} + y_3 (s*w*w*w)_{mn}  
\end{array}
\right).
$
}

\medskip

\noindent Equating the right hand sides of Equations \eqref{eq:formalSerLHS} and \eqref{eq:formalSerRHS} and 
matching like powers of $z_1, z_2$, 
we obtain the countably many vector valued equations
\begin{equation} \label{eq:preHomological}
(m \lambda_1 + n \lambda_2) a_{mn} = b_{mn}.
\end{equation}
This simple looking expression is of little value until we 
understand the dependence of the right hand side on the 
unknown Taylor coefficients $\left\{ a_{mn} \right\}$.  

%
%

The nonlinear terms in the expression for $b_{mn}$ are all of the form 
$(a * a * a)_{mn}$ -- a cubic Cauchy product coefficient, 
$(a * b * b * b)_{mn}$ -- a quartic Cauchy product coefficient, or
$(a * b * c * c * c)_{mn}$ -- a quintic Cauchy product coefficient.    
For example the cubic terms have the general form
\begin{align*}
(a*a*a)_{mn}  &= \sum_{j_1 = 0}^m \sum_{j_2 = 0}^{j_1} 
                            \sum_{k_1 = 0}^n \sum_{k_2 = 0}^{k_1} 
                            a_{m-j_1, n-k_1} a_{j_1 - j_2, k_1-k_2} a_{j_2, k_2}.
\end{align*}
Note that there are three ways that an $a_{mn}$ term can appear in the 
summand.  
It could be that $j_1 = k_1 = 0$, that $j_1 = m$ and $k_1 = n$, or that 
$j_2 = m$ and $k_3 = n$.  In each of these three cases the summand reduces to 
$a_{00}^2 a_{mn}$.  Since this can happen exactly three ways, the entire sum contains a 
term of the form $3 a_{00}^2 a_{mn}$.  Moreover this is the only term in the sum
depending on $a_{mn}$.  

Motivated by these comments we define a new triple product 
$(a \hat * a \hat * a)$ whose $mn$ term is given by 
\[
(a \hat * a \hat * a)_{mn} := (a*a*a)_{mn}  - 3 a_{00}^2  a_{mn}.
\]
Extending these considerations to the higher order products
leads to a new quartic product
$(a \hat* b \hat* b \hat* b)$ defined by 
\[
(a \hat* b \hat* b \hat* b)_{mn} := (a * b * b * b)_{mn} - 3 a_{00} b_{00}^2 b_{mn} - b_{00}^3 a_{mn}, 
\]
and a new quintic product $(a \hat* b \hat* c \hat* c \hat* c)$ defined by 
\[
(a \hat* b \hat* c \hat* c \hat* c)_{mn} :=  (a * b * c * c * c)_{mn} 
- b_{00} c_{00}^3 a_{mn} - a_{00} c_{00}^3 b_{mn}  - 3 a_{00} b_{00} c_{00}^2 c_{mn}.
\]
Expanding all Cauchy products using 
the hat products just defined leads to    
\[
b_{mn} = 
\]
\scalebox{.55}{
$\left(
\begin{array}{c}
q_{m n}       \\
2 s_{mn} + p_{mn} - m_1(u_{00}^3 p_{mn} + 3 p_{00} u_{00}^2 u_{mn} ) 
m_1 x_1 3 u_{00}^2 u_{mn} - m_2 (v_{00}^3 p_{mn} + 3 p_{00}v_{00}^2 v_{mn}) 
+ m_2 x_2 3 v_{00}^2 v_{mn} - m_3 (w_{00}^3 p_{mn} + 3 p_{00} w_{00}^2 w_{mn})
+ m_3 x_3 3 w_{00}^3 w_{mn}
+ Q_{mn}   \\
 \\
s_{mn}     \\
-2 q_{mn} + r_{mn} - m_1 (u_{00}^3 r_{mn} + 3r_{00}u_{00}^2 u_{mn}) + 3 m_1 y_1 u_{00}^2 u_{mn} 
- m_2(v_{00}^3 r_{mn} + 3 r_{00} v_{00}^2 v_{mn}) + 3 m_2 y_2 v_{00}^2 v_{mn} - 
m_3 (w_{00}^3 r_{mn} + 3r_{00} w_{00}^2 w_{mn}) + 3 m_3 y_3 w_{00}^2 w_{mn}
+ S_{mn}  \\
- (q_{00}u_{00}^3 p_{mn} + p_{00} u_{00}^3 q_{mn} + 3 p_{00} q_{00} u_{00}^2 u_{mn})
+ x_1 (u_{00}^3 q_{mn} + 3 q_{00} u_{00}^2 u_{mn})
- (s_{00}u_{00}^3 r_{mn} + r_{00} u_{00}^3 s_{mn} + 3 r_{00} s_{00} u_{00}^2 u_{mn})
+ y_1 (u_{00}^3 s_{mn} + 3 s_{00} u_{00}^2 u_{mn})
+U_{mn} \\
  - (q_{00}v_{00}^3 p_{mn} + p_{00} v_{00}^3 q_{mn} + 3 p_{00} q_{00} v_{00}^2 v_{mn})
+ x_2 (v_{00}^3 q_{mn} + 3 q_{00} v_{00}^2 v_{mn})
- (s_{00}v_{00}^3 r_{mn} + r_{00} v_{00}^3 s_{mn} + 3 r_{00} s_{00} v_{00}^2 v_{mn})
+ y_2 (v_{00}^3 s_{mn} + 3 s_{00} v_{00}^2 v_{mn})
+  V_{mn}   \\
 - (q_{00}w_{00}^3 p_{mn} + p_{00} w_{00}^3 q_{mn} + 3 p_{00} q_{00} w_{00}^2 w_{mn})
+ x_3 (w_{00}^3 q_{mn} + 3 q_{00} w_{00}^2 w_{mn})
- (s_{00}w_{00}^3 r_{mn} + r_{00} w_{00}^3 s_{mn} + 3 r_{00} s_{00} w_{00}^2 w_{mn})
+ y_3 (w_{00}^3 s_{mn} + 3 s_{00} w_{00}^2 w_{mn})
+W_{mn}
\end{array}
\right),
$
}

\medskip

\noindent where

\scalebox{.7}{
$Q_{mn} :=  -m_1(p \hat * u \hat * u \hat * u)_{mn} + m_1 x_1 (u \hat* u \hat *u)_{mn}  
  - m_2 (p \hat* v\hat*v\hat*v)_{mn}   + m_2 x_2  (v\hat*v\hat*v)_{mn} 
  - m_3 (p \hat*w\hat*w\hat*w)_{mn}  + m_3 x_3 (w\hat*w\hat*w)_{mn},$
}

\scalebox{.7}{$
S_{mn} :=  -m_1 (r \hat* u\hat*u\hat*u)_{mn} + m_1 y_1 (u\hat*u\hat*u)_{mn}    
 - m_2 (r\hat*v\hat*v\hat*v)_{mn}  + m_2 y_2 (v\hat*v\hat*v)_{mn} - m_3 (r\hat*w\hat*w\hat*w)_{mn} 
 + m_3 y_3 (w\hat*w\hat*w)_{mn}$,
}

\scalebox{0.7}{
$
U_{mn} := 
 - (p\hat*q\hat*u\hat*u\hat*u)_{mn}  +  x_1 (q \hat*u\hat*u\hat*u)_{mn}   - (r\hat*s\hat*u\hat*u\hat*u)_{mn} + y_1 (s\hat*u\hat*u\hat*u)_{mn},
$}

\scalebox{0.7}{
$
V_{mn} := 
 - (p\hat*q\hat*v\hat*v\hat*v)_{mn}  +  x_2 (q \hat*v\hat*v\hat*v)_{mn}   - (r\hat*s\hat*v\hat*v\hat*v)_{mn} + y_2 (s\hat*v\hat*v\hat*v)_{mn},
$}

\noindent and

\scalebox{0.7}{
$
W_{mn} := 
 - (p\hat*q\hat*w\hat*w\hat*w)_{mn}  +  x_3 (q \hat*w\hat*w\hat*w)_{mn}   - (r\hat*s\hat*w\hat*w\hat*w)_{mn} + y_3 (s\hat*w\hat*w\hat*w)_{mn}.
$}

\medskip


Note that $Q_{mn}, S_{mn}, U_{mn}, V_{mn}$, and $W_{mn}$ are composed of hat products
and hence depend on lower order terms.  Moreover the terms highest order terms  
$p_{mn}$, $q_{mn}$, $r_{mn}$, $s_{mn}$, $u_{mn}$, $v_{mn}$, and $w_{mn}$ appear only linearly.
We write the linear terms in matrix-vector form and compare the resulting 
matrix with the formula for the Jacobian $DF(a_{00})$ 
given in Equation \eqref{eq:polyJacobi}
and see that 
\[
b_{mn } = DF(a_{00}) a_{mn} + c_{mn}, 
\]
where 
\begin{equation}\label{eq:defOfcmn}
c_{mn} :  =  \left(
0, 
Q_{mn}, 
0, 
S_{mn}, 
U_{mn}, 
V_{mn}, 
W_{mn}
\right).
\end{equation}
Finally, recalling that $a_{00} = \mathbf{u}_0$, we rewrite Equation \eqref{eq:preHomological} 
as
\begin{equation} \label{eq:homEq}
\left[DF(\mathbf{u}_0) - (m \lambda_1 + n \lambda_2) \mbox{Id} \right] a_{mn} = - c_{mn}.
\end{equation}
Equation \eqref{eq:homEq} is called the homological equation for 
$P$, and provides one linear equation for each Taylor coefficient $a_{mn}$, with
right hand side depending only on 
lower order coefficients.  

\begin{remark}[The formal series is well defined]
Since 
\[
\lambda_1 = -\alpha + i \beta,  
\]
and $\lambda_2 = \overline{\lambda_1}$, we have that 
$m \lambda_1 + n \lambda_2$ is never an eigenvalue of $DF(a_{00})$, and 
the homological equations are uniquely solvable to all orders $m + n \geq 2$.  
This gives a direct proof, in the explicit case of the automatically differentiated CRFBP, 
of the more general Existence result alluded to in
Remark \ref{eq:invEqnFlows1}.  That is, the homological 
equations are uniquely solvable, hence we can compute the 
coefficients of $P$ to any desired finite order.   

This discussion is summarized in Algorithm \ref{alg:equilibrium}.
Supposing that $\{\bar{a}_{mn}\}_{0 \leq m+n \leq N} \in \mathbb{C}^7$ is the
output the algorithm,  then 
\[
P^N(z_1, z_2) = \sum_{0 \leq m+n \leq N} \bar{a}_{mn} z_1^m z_2^n,
\] 
is our approximation of the stable manifold.  Running the algorithm on 
interval enclosures of the input data, and using a validated 
interval linear system solver results in 
interval coefficients $\bar{a}^N = \{\bar{a}_{mn}\}_{0 \leq m+n \leq N}$
enclosing the actual Taylor coefficients of the true solution $P$.
\end{remark}

\begin{remark}[Scaling the eigenvectors] \label{rem:eigScaling}
Since the homological equations uniquely determine all the coefficients of order $m + n \geq 2$, 
we also have direct proof, again in the case of the automatically differentiated CRFBP,
of the uniqueness result cited in Remark \ref{eq:invEqnFlows1}.  
That is, the coefficients 
of $P$ are uniquely determined once the scalings of the eigenvectors are fixed.  
In fact more is true.  The coefficients of different parameterizations corresponding to 
different eigenvector scalings are related in a quantifiable way.  

Suppose that $P, \tilde P \colon D \to \mathbb{C}^d$ are parameterizations satisfying Equation
\eqref{eq:invEqnFlows1}, but suppose that $P$ satisfies the first order constraints 
\[
P(0,0) = \mathbf{x}_0, 
\quad \quad \quad 
\frac{\partial}{\partial z_1} P(0,0) = \mathbf{\xi}_1, 
\quad \quad \quad 
\frac{\partial}{\partial z_2} P(0,0) = \mathbf{\xi}_2, 
\] 
while 
\[
\tilde{P}(0,0) = \mathbf{x}_0, 
\quad \quad \quad 
\frac{\partial}{\partial z_1} \tilde{P}(0,0) = s \mathbf{\xi}_1, 
\quad \quad \quad 
\frac{\partial}{\partial z_2} \tilde{P}(0,0) = s \mathbf{\xi}_2, 
\] 
for some $s \neq 0$.  That is: $P$ and $\tilde P$ are parameterizations corresponding 
to different choices of eigenvector scalings.
Let $\{a_{mn}\}_{(m,n) \in \mathbb{N}^2}$ and $\{\tilde{a}_{mn}\}_{(m,n) \in \mathbb{N}^2}$
denote, respectively the power series coefficients of $P$ and $\tilde P$.  
Then for each $m + n \geq 2$ the coefficients have
\[
\tilde{a}_{mn} = s^{m+n} a_{mn}.
\]
A proof of this claim is found in Section $3.3$ of \cite{myAMSnotes}.

The formula says we obtain the coefficients of any parameterization 
once we know the coefficients of one.
This means we can select the desired exponential growth rate of the parameterization 
coefficients by adjusting the scaling of the eigenvectors.  Larger choices of $|s|$ correspond to 
larger embeddings of $D$ in phase space and hence parameterization of larger patches
of the local stable manifold.  Numerical optimization schemes for the parameterization method
based on these observations have been developed by \cite{maximeJPMe}.
\end{remark}

\begin{remark}[Complex conjugate coefficients] \label{rem:complexConjCoeff}
Returning to Equation \eqref{eq:homEq}, note that
\[
\overline{DF(p) - (m \lambda_1 + n \lambda_2) \mbox{Id}} = DF(p) - (m \lambda_2 + n \lambda_1) \mbox{Id},
\]
as $DF(p)$ is a real matrix and $\lambda_1, \lambda_2$ are complex conjugates.  
Moreover, we have that the right hand side $c_{mn}$ is a real polynomial function of lower order coefficients.  
Choosing complex conjugate eigenvectors for the first order constraints gives
\[
\overline{a_{10}} = \overline{\mathbf{v}_1} =\mathbf{v}_2 = a_{01},  
\]
and an induction argument with the above as the base case shows that   
\[
\overline{a_{mn}} = a_{nm},
\]
for all $m + n \geq 2$.  
\end{remark}

\begin{remark}[Implementation of the hat product] \label{rem:implementHat}
When the hat products are implemented numerically it is not necessary to 
compute the terms of the Cauchy product and the subtract out the 
terms depending on $(m,n)$ as in the definition.  Instead these terms are
simply ignored when computing the sums, that is they are not included 
in the first place.  
\end{remark}

\begin{algorithm}[t!]
\caption{Stable manifold approximation -- equilibrium $p$}\label{alg:equilibrium}
\begin{algorithmic}[1]
\Function{Param\_Equilibrium}{$\mathbf{u}_0, \mathbf{v}_1,\mathbf{v}_2, \lambda_1, \lambda_2, N$} 
\State $a_{00} \gets \mathbf{u}_0$
\State $a_{10} \gets \mathbf{v}_1$
\State $a_{01} \gets \mathbf{v}_2$
\ForAll{$n=2$ to $N$}
\ForAll{$k=0$ to $n$}
\State  $c_{n-k, k} =  \mbox{computeHomEqRHS\_CRFBP}(\{a_{jk}\}_{0 \leq j+k \leq n}, n-k, k)$
 \State $a_{n-k k} \gets  \mbox{LinSysSolve}(DF(p) - ((n-k)\lambda_1 + k \lambda_2)\mbox{Id}, -c_{n-k k})$  
\EndFor
\EndFor
\State \Return $\displaystyle\{a_{mn}\}_{0\leq m +n \leq N}$
\EndFunction 
\State \textbf{Remark $1$:} note that the computation of the $c_{mn}$
is specific to the CRFBP, but  the rest of the algorithm is general.
The function \verb|computeHomEqRHS_CRFBP| implements the  
evaluation of Equation \eqref{eq:defOfcmn}, which is itself comprised of
Cauchy ``hat'' products.  The hat products are simply finite sums. 
\State \textbf{Remark $2$:} if $\mathbf{u}_0, \mathbf{v}_1,\mathbf{v}_2, \lambda_1, \lambda_2$
are interval enclosures of the first order data, 
and if  \verb|computeHomEqRHS_CRFBP| is implemented in 
interval arithmetic, and if the linear system
solver solver returns a validated interval enclosure of the solution, then 
\verb|Param_Equilibrium| returns interval enclosures of the $\{a_{mn}\}$.  
Linear system solvers with mathematically 
rigorous interval error bounds are discussed in 
\cite{MR2652784} and implemented in IntLab\cite{Ru99a}. 
\end{algorithmic}
\end{algorithm}

\subsection{Formal integration of a material line}
Let $\gamma \colon [-1,1] \to \mathbb{R}^d$ be a real analytic curve with 
Taylor series expansion 
\[
\gamma(s) = \sum_{n=0}^\infty \gamma_n s^n.
\]
We look for a power series solution $\Gamma \colon [-1,1]^2 \to \mathbb{R}^d$ of 
Equation \eqref{eq:invEqAdvArc}, and write
\[
\Gamma(s,t) = \sum_{n=0}^\infty \Gamma_n(s) t^n,
\] 
where
\[
\Gamma_n(s) = \sum_{m=0}^\infty \Gamma_{mn} s^t.
\]
Let 
\[
\Gamma_n(s) = 
\left(
\begin{array}{c}
p_n(s) \\
q_n(s)  \\
r_n(s)  \\
s_n(s)  \\
u_n(s)  \\
v_n(s)  \\
w_n(s) 
\end{array}
\right),
\quad \quad \quad 
\mbox{and}
\quad \quad \quad 
\Gamma_{mn} = 
\left(
\begin{array}{c}
p_{mn} \\
q_{mn}  \\
r_{mn}  \\
s_{mn} \\
u_{mn} \\
v_{mn} \\
w_{mn} 
\end{array}
\right),
\]
denote the component functions and coefficients.  

Expanding the left hand side of 
Equation \eqref{eq:invEqAdvArc} as a power series leads to 
\[
\tau \frac{\partial}{\partial t} \Gamma(s, t) = \sum_{n=0}^\infty
(n+1) \Gamma_{n+1}(s) t^n.
\]
On the other hand, we write 
\[
F(\Gamma(s,t)) = \sum_{n=0}^\infty b_n(s) t^n, 
\]
where 
\[
b_n(s) = \sum_{m=0}^\infty b_{mn} s^m t^n.
\]
Employing the Cauchy product
leads to 
\[
b_n(s) = 
\]
\scalebox{.60}{
$
\left(
\begin{array}{c}
q_n       \\
2 s_n + p_n - m_1 (p * u * u * u)_n + m_1 x_1 (u*u*u)_n  
  - m_2 (p * v*v*v)_n   + m_2 x_2  (v*v*v)_n - m_3 (p *w*w*w)_n  + m_3 x_3 (w*w*w)_n  \\
s_n     \\
-2 q_n + r_n - m_1 (r * u*u*u)_n + m_1 y_1 (u*u*u)_n    
 - m_2 (r*v*v*v)_n  + m_2 y_2 (v*v*v)_n - m_3 (r*w*w*w)_n + m_3 y_3 (w*w*w)_n  \\
 - (p*q*u*u*u)_n  +  x_1 (q *u*u*u)_n   - (r*s*u*u*u)_n + y_1 (s*u*u*u)_n   \\
  - (p*q*v*v*v)_n  + x_2 (q *v*v*v)_n - (r*s*v*v*v)_n + y_2 (s*v*v*v)_n       \\
 - (p*q*w*w*w)_n  + x_3 (q *w*w*w)_n - (r*s*w*w*w)_n + y_3 (s*w*w*w)_n  
\end{array}
\right).
$
}

\smallskip

\noindent Matching like powers and solving for $\Gamma_{n+1}(s)$ leads to 
the recursion relation
\[
\Gamma_{n+1}(s) = \frac{1}{\tau (n+1)} b_n(s),
\]
where $b_n(s)$ is a function of only lower order terms.  By iterating this 
recursion, initializing with $\Gamma_0(s) = \gamma(s)$, we compute 
the Taylor expansion of $\Gamma(s,t)$ to any desired order.

In practice we are only able, for each value of $n$, to 
compute the expansion for $\Gamma_{n+1}(s)$ to 
finite order in $m$ .  That is, for fixed $n$ we solve the recursion relations 
\begin{equation} \label{eq:gammaRecurEq}
\Gamma_{m(n+1)} = \frac{1}{\tau (n+1)} b_{mn}, 
\end{equation}
where 
\[
b_{mn} = 
\]
\scalebox{.60}{
$ 
\left(
\begin{array}{c}
q_{mn}       \\
2 s_{mn} + p_{mn} - m_1 (p * u * u * u)_{mn} + m_1 x_1 (u*u*u)_{mn}  
  - m_2 (p * v*v*v)_{mn}   + m_2 x_2  (v*v*v)_{mn} - m_3 (p *w*w*w)_{mn}  + m_3 x_3 (w*w*w)_{mn}  \\
s_{mn}     \\
-2 q_{mn} + r_{mn} - m_1 (r * u*u*u)_{mn} + m_1 y_1 (u*u*u)_{mn}    
 - m_2 (r*v*v*v)_{mn}  + m_2 y_2 (v*v*v)_{mn} - m_3 (r*w*w*w)_{mn} + m_3 y_3 (w*w*w)_{mn}  \\
 - (p*q*u*u*u)_{mn}  +  x_1 (q *u*u*u)_{mn}   - (r*s*u*u*u)_{mn} + y_1 (s*u*u*u)_{mn}   \\
  - (p*q*v*v*v)_{mn}  + x_2 (q *v*v*v)_{mn} - (r*s*v*v*v)_{mn} + y_2 (s*v*v*v)_{mn}       \\
 - (p*q*w*w*w)_{mn}  + x_3 (q *w*w*w)_{mn} - (r*s*w*w*w)_{mn} + y_3 (s*w*w*w)_{mn}  
\end{array}
\right).
$
}

\begin{algorithm}[t!]
\caption{Approximate integration of a material line}\label{alg:flowMaterialLine}
\begin{algorithmic}[1]
\Function{Flow\_Line}{$\left\{\gamma_{mn}\right\}_{m=0}^{M}, N$} 
\State \ForAll{$m = 0$ to $M$}
\State $\Gamma_{m0} = \gamma_m$
\EndFor
\State \ForAll{$n=1$ to $N$}
\ForAll{$m=0$ to $M$}
\State  $b_{m, n} =  \mbox{computeRecursionRHS\_CRFBP}(\{\Gamma_{jk}\}_{j=0, k=0}^{mn}, m, n)$
 \State $\Gamma_{m n+1} \gets \frac{1}{\tau(n+1)} b_{mn}$
\EndFor
\EndFor
\State \Return $\displaystyle \{\Gamma_{mn}\}_{m=0, n=0}^{M, N}$
\EndFunction 
\State \textbf{Remark $1$:}
The first loop initializes $\Gamma(s,0)$ using the given expansion for $\gamma$.
The next loop compute the Taylor coefficients from order $n = 1$ to order $n = N$, 
using the recursion relation of Equation \eqref{eq:gammaRecurEq}.
The inner loop computes for each $n$ 
the coefficient of $\Gamma_{mn}$ to order $m = M$.
\State \textbf{Remark $2$:}
The function \verb|computeRecursionRHS_CRFBP| 
just implements the explicit formula given below 
Equation \eqref{eq:gammaRecurEq}. When this function is 
implemented in interval arithmetic then the output
results in rigorous interval encloses the correct Taylor coefficients. 
\end{algorithmic}
\end{algorithm}

Algorithm \ref{alg:flowMaterialLine} summarizes the preceding discussion.
Suppose that $\left\{\Gamma_{mn} \right\}_{m=0 n=0}^{MN}$ is the 
output of the algorithm.  Then 
\[
\Gamma^{MN}(s,t) = \sum_{m=0}^M \sum_{n=0}^N \Gamma_{mn} s^m t^n,  
\]
is our polynomial approximation of the advected curve.

\begin{remark}[Rescaling to control the coefficient growth]
From a practical point of view, the most difficult part of growing the local 
stable/unstable manifolds
is managing the rescaling and recentering of the manifold patches in an 
efficient and automated way.  These technical details are managed using the 
techniques developed in \cite{manifoldPaper1}.  The reference just cited 
is devoted to the treatment of these issues.
\end{remark}

\subsection{Remarks on the tail validations for the formal series}
Suppose that $f \colon D \to \mathbb{C}$ is an analytic function
and $f^N$ is a polynomial approximation of $f$. For example
in the discussion above $f$ could be a component of the 
local stable/unstable manifold parameterization or a component of 
a chart $\Gamma$ for an advected boundary arc. 
The goal of any validated numerical method 
is to obtain, with computer assistance, a mathematically 
rigorous error bound $r > 0$ of the form 
\[
\sup_{(z_1, z_2) \in D} \left| f^N(z_1, z_2) - f(z_1, z_2) \right| \leq r.
\]
Note that if $f$ is analytic on $D$ then 
\[
h(z_1, z_2) =  f(z_1, z_2) - f^N(z_1, z_2),
\]
defines an analytic function on $D$, and $r$ provides a bound on the 
supremum norm of $h$.  

It follows that 
\[
f(z_1, z_2) =  f^N(z_1, z_2) + h(z_1, z_2),
\]
so that 
\[
\sup_{(z_1, z_2) \in D} \left| f(z_1, z_2) \right| \leq \sup_{(z_1, z_2) \in D} \left| f^N(z_1, z_2) \right| + r,
\]
where the supremum of the polynomial $f^N$ is easily bound numerically.
Considering derivatives, we have for example that 
\[
\frac{\partial}{ \partial z_1 } f(z_1, z_2) =  \frac{\partial}{ \partial z_1 }  f^N(z_1, z_2) 
+ \frac{\partial}{ \partial z_1 }  h(z_1, z_2),
\]
where the partial derivative of a polynomial is once again easy to compute numerically.
Derivatives of the truncation error $h$ are obtained on any smaller domain
than $D$ by using classical estimates
or complex analysis.  See for example the Cauchy Bounds of Lemma 2.9 
of \cite{MR3068557}.  Of course these remarks generalize to other/higher order
derivatives of $f$ in the obvious way.  

Implementation of the computer assisted error analysis
used for the CRFBP is discussed in the companion paper 
\cite{thisPaperII}.  Roughly speaking, the idea of the 
error analysis is to exploit that the unknown function 
solves a functional equation (invariance equation for the parameterization 
method in the case of $P$ and the differential equation in the case of
$\Gamma$).  The functional equation is used in conjunction with the 
known polynomial approximation $P^N$ or $\Gamma^N$ to derive
a fixed point problem for the truncation error $h(z_1, z_2)$ on an 
appropriate function space.  In fact, the 
computer assisted argument is actually formulated on an 
infinite sequence space of power series coefficients.
This is an often used  convenience going back to the work of
\cite{MR648529, MR727816, MR883539}.

The approach of \cite{thisPaperII} is similar to 
that of \cite{MR3281845}, and also to the techniques worked out for 
some simple example problems in \cite{myAMSnotes}.   
Fuller discussion of validated numerical 
integrators -- that is, computer assisted proofs for initial value problems -- 
is found in the works of  
\cite{cnLohner, MR1701385, MR1870856, 
MR1652147,MR2312532,
MR1930946, MR3022075, MR3281845}
and in the references found therein.  
See also the book of \cite{MR2807595}.
In Section \ref{sec:results} we only remark on the results of the 
validated numerical computations carried out using the 
methods of \cite{thisPaperII} and suppress the technical details
of how the bounds are actually obtained.  

\section{Results} \label{sec:results}

\subsection{Computer assisted proof of a transverse homoclinic} 
\label{sec:finalresults}
In this section, we conclude the proof of Theorem \ref{thm:ourThm1}. Specifically, we verify 
the hypotheses of Theorem \ref{thm:devaney} by explicitly computing a validated intersection of the stable 
and unstable manifolds and then proving transversality for this intersection. 
Throughout this section we use the following notation. Recall that $\norm{z}_{\infty}$ denotes the max 
norm on $\cc^d$ as defined in Appendix \ref{sec:norms}. For any $r \in \rr^+$, let 
\[
B_r(z_0) = \{z \in \mathbb{C}^d : \ \norm{z - z_0}_{\infty} < r \}
\]
denote the open ball of radius $r$, and let $C_r^{d}$ denote the space of bounded 
analytic functions, $g: B_r(0) \to \cc^d$, which we equip with the norm
\[
\norm{g}_r = \sup\limits_{z \in B_r(0)} \norm{g(z)}_{\infty}.
\]

\subsubsection{Local manifolds}
\label{sec:localmanifolds}
The local stable and unstable manifolds are each computed to order 7 
using the methods discussed in Section \ref{sec:parameterization}. This yields a 
parameterization for $W^u_{\text{loc}}(\mathbf{x_0})$ of the form
\[
Q(z_1,z_2) = \sum_{0 \leq m+n \leq 7}  q_{m,n}z_1^{m}z_2^n + h_{Q}(z_1,z_2)
\]
where each $q_{m,n} \in \intrr ^7$ is an interval vector, and $h_{Q} \in C_1^{2}$ with a rigorous error bound
\[
\norm{h_{Q}}_1 \leq r_Q = .5048 \times 10^{-16}. 
\]
Any arc segment transverse to the linear flow can be parameterized, and then rigorously lifted through the local 
parameterization to obtain a parameterized arc segment, $\gamma^u: [-1,1] \to \rr^7$, such that 
\[
\gamma^u([-1,1]) \subseteq W^u_{\text{loc}}(\mathbf{x_0}),
\]
which is transverse to $\Phi$. For this validation, we lifted a piecewise linear closed curve transverse to the linear flow to obtain a collection of charts
\[
\gamma_*^u(s) = \sum_{n = 0}^{15}  a_ns^n + h^u_{*}(s),
\]
where $a_n \in \intrr^7$, $h^u_{*} \in C^1_1$, and a rigorous bound, $\norm{h^u_*}_1 \leq r^u = .2936 \times 10^{-13}$, which holds for all 20 unstable charts. 

A similar parameterization is carried out for the stable manifold to obtain a collection of charts mapping into $W^s_{\text{loc}}(\mathbf{x_0})$ of the form
\[
\gamma_*^s(s) = \sum_{n = 0}^{15}  b_ns^n + h_*^s(s)
\]
with $\norm{h_*^s}_1 \leq r^s = .3050 \times 10^{-13}$ for all 20 stable charts.


%
\begin{figure}[t!]
\center{
	\includegraphics[width = .49\textwidth,keepaspectratio=true]{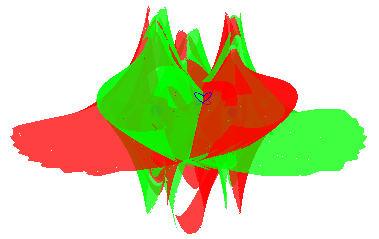}
	\includegraphics[width = .49\textwidth,height = 2.25in]{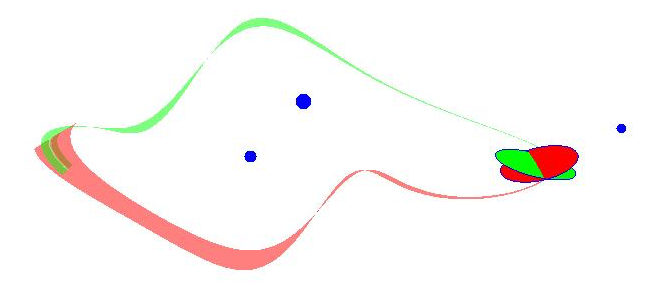}
	}
	\caption{Stable/unstable manifold atlases for the CRFBP. 
        Projection on the $(x,y,\dot{y})$ coordinates.
	(Left) thousands of polynomial chart maps
	obtained by advecting the boundaries of the local parameterizations.  Stable shown green and unstable 
	red.  The local patches are the disks with black boundaries in the center of the frames. (Right) a candidate 
	intersection extracted form the atlas by pairwise checking the patches for potential intersections.  The  
	parent charts are shown all the way back to the local manifold boundaries. Note that the Figure illustrates
	three dimensional projections of four dimensional objects, and the intersection of the manifolds in a 
	straight line through the center of the local manifolds is a projection error.  The actual local manifolds
	intersection only at the saddle-focus equilibrium point.}\label{fig:CRFBP_connection}
\end{figure}

%
%

\subsubsection{Growing an atlas} \label{sec:atlas}
We obtain the global stable/unstable manifolds by advecting the initial charts for the local stable/unstable manifolds 
using the formal series calculations discussed in Section \ref{sec:formalComputations} ,
the computer assisted validation techniques of \cite{thisPaperII}, and the automatic remeshing and 
rescaling algorithms developed in \cite{manifoldPaper1}.
For example, advecting the boundaries of the unstable/stable manifolds for 
five units of forward/backward leads to the manifold atlases illustrated in the left frame of 
Figure \ref{fig:CRFBP_connection}.
The stable atlas is comprised of $6,546$ and the unstable manifold comprised of $6,753$ 
polynomial chart maps.  Each chart is computed to order $15$ in the spatial direction and 
order $50$ in time.  The validated manifold patches have error bounds ranging from about
$10^{-11}$ close to the parameterized local manifolds, to $10^{-4}$ near the end of the calculation.

Next, we search the results for homoclinic connection candidates, which are then 
post-processed using the computer assisted methods of proof discussed in 
Section \ref{prop:finiteDimNewton}.  A candidate connection is 
illustrated in the right frame of Figure \ref{fig:CRFBP_connection}.  The proof is discussed in 
more detail in the next section.  For now we only remark that if the proof of the candidate 
connection fails then we can recover the ``parent'' charts of the candidates all the way back 
to the boundary of the invariant manifold and we can recompute to forward advection 
using increased accuracy.  This is much cheaper than recomputing the entire atlas with 
increased accuracy.

\subsubsection{Existence of a transverse intersection}
After identifying a pair of charts with potential transverse intersection as discussed above, 
we use Lemma \ref{lem:aPosEnergyLemma1} to establish the existence of a true intersection 
point.  Recall that the Lemma restates the existence of an intersection of the stable/unstable 
manifolds in terms of the solutions of a certain zero finding problem, defined in terms of the candidate 
charts.  The next step is to run a non-rigorous numerical Newton method 
to refine the approximate intersections.  This numerical calculation is 
carried out using the polynomial (truncated) part of the charts. 

Specifically, given a pair of charts, $\Gamma^u,\Gamma^s$, which are candidates for an intersection of the stable/unstable manifolds, we run Newton's method (non-rigorously) to obtain parameters, $(\overbar{s}, \overbar{t}, \overbar{\sigma}) \in \rr^3$, such that  $\Gamma^s(\overbar{s},\overbar{t}) \approx \Gamma^u(\overbar{\sigma},0)$. Now, we compute the a-posteriori estimates discussed in Theorem \ref{prop:finiteDimNewton}. If the estimates satisfy the theorem, then the a-posteriori validation succeeds and we check the condition described in Lemma \ref{lem:aPosEnergyLemma1} to conclude the existence of a true homoclinic in an explicit neighborhood of $\Gamma^u(\overbar{s},\overbar{t})$. In fact the argument is very similar to the example proof discussed in Section 
\ref{sec:NKex1}. Finally, we verify the hypothesis of Lemma \ref{lem:aPosEnergyLemma2} and conclude that the homoclinic is transverse. 
 
Below, we explicitly describe the a-posteriori estimates obtained for a pair of charts lying in the intersection shown on the right of Figure \ref{fig:CRFBP_connection}. Recall that the stable chart may be decomposed as $\Gamma^s = P^N + P^{\infty}$ where $P^N$ is a polynomial with $(M,N) = (15,50)$ coefficients and $P^{\infty} \in C^2_1$. A similar decomposition for the unstable chart is given by $\Gamma^u = Q^N + Q^{\infty}$. We define
\[
F(s,t,\sigma) = P(s,t) - Q(\sigma,0) = F^N(s,t,\sigma) + F^{\infty}(s,t,\sigma),
\] 
and applying Newton iteration to $F$ we find $(\overbar{s}, \overbar{t}, \overbar{\sigma}) = (-.1421, -.0682, .0946) = \overbar{x}$ satisfying $F^N(\overbar{x})\approx 0$. In other words, we take our approximate zero for $F$ to be an approximate zero for the polynomial part of $F$. Now, we define $A^{\dagger}$ to be the matrix obtained by 
evaluating the formula $DF^N(\overbar{x})$ using double precision floating point arithmetic
(no interval enclosures), and let $A$ be any numerical inverse of $A^\dagger$.  Set (somewhat arbitrarily)
$r_* = 7 \times 10^{-3}$. 

Next, we have a lemma which allows us to control derivatives of bounded analytic functions on $B_1(0) \subset \cc^3$, by restricting to a smaller polydisc. 
\begin{lemma}
\label{lem:derivativebounds}
Suppose $g \in C^3_1$ is a bounded analytic function defined on $B_1(0)$. Then for any $\nu > 0$, we have the bounds
\begin{align*}
\norm{Dg}_{e^{-\nu}} & \leq \frac{6 \pi}{\nu} \norm{g}_{1} \\
\norm{D^2g}_{e^{-\nu}} & \leq \frac{36 \pi^2}{\nu^2} \norm{g}_{1} 
\end{align*}
\end{lemma}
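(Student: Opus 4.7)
The plan is to prove both bounds by a direct appeal to the Cauchy integral formula for bounded analytic functions on polydiscs, with a single iteration to handle the second derivative.

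For the derivative bound, I would fix $z = (z_1, z_2, z_3) \in B_{e^{-\nu}}(0)$, so $|z_k| \leq e^{-\nu}$ for each $k$. For each index $k \in \{1,2,3\}$, the slice map $w \mapsto g(z_1, \ldots, w, \ldots, z_3)$ (with $w$ sitting in the $k$-th coordinate) is bounded analytic on the open unit disc in $\cc$, with supremum at most $\norm{g}_1$. Applying the one-variable Cauchy integral formula for the derivative on the circle $|w - z_k| = 1 - |z_k|$ gives the componentwise estimate
\[
\norm{\partial_k g(z)}_{\infty} \leq \frac{\norm{g}_1}{1 - |z_k|} \leq \frac{\norm{g}_1}{1-e^{-\nu}}.
\]
Since the matrix norm induced by the max norm on $\cc^3$ (Appendix \ref{sec:norms}) is the maximum absolute row-sum, summing the three partial-derivative estimates gives $\norm{Dg(z)}_M \leq 3 \norm{g}_1/(1-e^{-\nu})$. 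Taking the supremum over $z \in B_{e^{-\nu}}(0)$ and invoking the elementary inequality $1 - e^{-\nu} \geq \nu/(2\pi)$ (valid on the range of $\nu$ relevant to the validation) yields the claimed $6\pi/\nu$ bound.

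For the second-derivative bound, the plan is to iterate. Split the radius contraction as $e^{-\nu} = e^{-\nu/2} \cdot e^{-\nu/2}$. First apply the derivative estimate to $g$ on the intermediate polydisc $B_{e^{-\nu/2}}(0)$ to obtain a bound on $\norm{Dg}_{e^{-\nu/2}}$. Then regard $Dg$ as an analytic function on $B_{e^{-\nu/2}}(0)$ valued in the space of $3 \times 3$ matrices (equivalently, apply the Cauchy formula componentwise to each $\partial_k g_j$) and apply the derivative estimate a second time to bound $\norm{D^2 g}_{e^{-\nu}}$ in terms of $\norm{Dg}_{e^{-\nu/2}}$. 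Chaining the two estimates, tracking the multiplicative constants through both Cauchy integrals, and converting from componentwise partial bounds to the induced bilinear form norm $\norm{\cdot}_Q$ of Appendix \ref{sec:norms} produces the claimed $36\pi^2/\nu^2$ factor. An alternative is to apply Cauchy directly to mixed second-order partials using the polydisc kernel $(w_j - z_j)^{-2}(w_k - z_k)^{-2}$, but the iterated version keeps the constants cleanest because $36\pi^2 = (6\pi)^2$ appears naturally as the square of the first-order constant.

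The main obstacle is bookkeeping of constants rather than any genuine analytical difficulty: the Cauchy estimates themselves are classical, but one must carefully convert componentwise bounds into the operator norm $\norm{\cdot}_M$ (contributing a factor of $3$ from summing over rows) and the bilinear norm $\norm{\cdot}_Q$ (contributing an analogous factor in the second-derivative case), and verify that the scalar inequality $1 - e^{-\nu} \geq \nu/(2\pi)$ is applied in an admissible range of $\nu$. None of these steps is deep, but the algebra must be done consistently to land on the exact constants $6\pi$ and $36\pi^2$ stated in the lemma.
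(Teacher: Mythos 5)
The paper never proves this lemma at all: it is quoted verbatim with the remark that ``a proof of this lemma for arbitrary dimension can be found in \cite{MR3068557}'', so you are reconstructing an external argument. Your general strategy (one--variable Cauchy estimates applied slice-wise on the polydisc, then conversion to the $\norm{\cdot}_M$ and $\norm{\cdot}_Q$ norms) is the right spirit, but the constant bookkeeping that you defer as ``the main obstacle'' is exactly where the argument breaks. For the first bound, your chain gives $\norm{Dg(z)}_M \le 3\norm{g}_1/(1-e^{-\nu})$, and you then invoke $1-e^{-\nu} \ge \nu/(2\pi)$ as an ``elementary inequality''. It is not universally valid: the left side is bounded by $1$ while the right side grows, so it fails for $\nu$ beyond roughly $6.3$, and hence your argument yields the stated inequality only on a restricted range of $\nu$, whereas the lemma is asserted for every $\nu>0$. (In fairness, with the paper's norms the universal statement cannot hold with these constants --- take $g(z)=(z_1,z_1,z_1)$, for which $\norm{Dg}_{e^{-\nu}}=1=\norm{g}_1$ while $6\pi/\nu<1$ once $\nu>6\pi$ --- so the cited lemma must implicitly carry a range restriction; but then your proof must identify and verify that range explicitly, e.g.\ $\nu\le 2\pi(1-e^{-\nu})$, and check that the value $\nu=1.9028$ used in Section \ref{sec:finalresults} lies in it, rather than gesture at ``the range relevant to the validation''.)

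The more serious gap is the second-derivative bound. Your plan is to pass through the intermediate radius $e^{-\nu/2}$ and claim that ``$36\pi^2=(6\pi)^2$ appears naturally as the square of the first-order constant''. It does not, for two separate reasons. First, each application is at parameter $\nu/2$, so even formally the product of the two first-order constants is $(12\pi/\nu)^2=144\pi^2/\nu^2$, not $36\pi^2/\nu^2$. Second, and more fundamentally, after the first step you control $Dg$ only on $B_{e^{-\nu/2}}(0)$, not on the unit polydisc, so the second Cauchy estimate must use discs of radius $e^{-\nu/2}-e^{-\nu}=e^{-\nu/2}\bigl(1-e^{-\nu/2}\bigr)$; equivalently, rescaling $Dg$ back to the unit polydisc costs a chain-rule factor $e^{-\nu/2}$ that reappears as $e^{+\nu/2}$ when you return to $D^2g$. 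Carried out, the iteration gives an entrywise bound $e^{\nu/2}\norm{g}_1/(1-e^{-\nu/2})^2$, hence $\norm{D^2g}_{e^{-\nu}}\le 9\,e^{\nu/2}\norm{g}_1/(1-e^{-\nu/2})^2$, which is $\le 36\pi^2\norm{g}_1/\nu^2$ only on an even smaller $\nu$-interval; the factor $e^{\nu/2}$ is precisely what the ``square the constant'' step ignores. Your fallback --- the polydisc Cauchy formula applied directly to second partials --- is the sounder route, but as sketched it also misses the target: the diagonal entries $\partial_k^2 g_i$ carry the factor $2!$, so summing the nine entries for the $\norm{\cdot}_Q$ norm gives $12\norm{g}_1/(1-e^{-\nu})^2\le 48\pi^2\norm{g}_1/\nu^2$, overshooting the stated $36\pi^2/\nu^2$. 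To actually obtain the lemma you either need the sharper per-entry estimates (and norm conventions) of the cited reference, or you must restate the bounds with the constants and the $\nu$-range your method genuinely produces and check that the application in Section \ref{sec:finalresults} still goes through.
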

\noindent A proof of this lemma for arbitrary dimension can be found in \cite{MR3068557}. With this in hand, set 
\[
\nu = -\ln( \norm{\overbar{x}}_{\infty} + r_*) = 1.9028 
\]
and note that $B_{r_*}(\overbar{x}) \subseteq B_{e^{-\nu}}(0)$. Now, we are prepared to compute the a-posteriori estimates required for Theorem \ref{prop:finiteDimNewton}. All computations below are carried out using interval arithmetic with floating point numbers regarded as degenerate intervals of the form $[x,x] \in \intrr$. 

\subsubsection*{$Y_0$}
We note that we have an error bound for $F^{\infty}$ given by $\norm{F^{\infty}}_1 \leq \norm{P^{\infty}}_1 + \norm{Q^{\infty}}_1$ which leads to the enclosure
\[
F(\overbar{x}) \in [F^N(\overbar{x}) - \norm{F^{\infty}}_1, F^N(\overbar{x}) + \norm{F^{\infty}}_1].
\]
From this estimate, we compute the enclosure $\norm{A \cdot F(\overbar{x})}_{\infty} \in [0,.0054]$ and we take $Y_0 = .0054$. 

\subsubsection*{$Z_0$}
Let $I$ denote the $3 \times 3$ identify matrix, then we explicitly compute
\[
\norm{I - AA^{\dagger}}_{\infty} \in [0,.1349 \times 10^{-14}].
\]
Hence, we take $Z_0 = .1349 \times 10^{-14}$. 

\subsubsection*{$Z_1$}
We decompose $DF(\overbar{x}) = DF^N(\overbar{x}) + DF^{\infty}(\overbar{x})$ and apply Lemma \ref{lem:derivativebounds} to obtain
\[
\norm{DF^{\infty}}_{e^{-\nu}} \leq \frac{6 \pi}{\nu}\norm{F^{\infty}}_{1} \leq  4.3739 \times 10^{-4}.
\]
It follows that 
\[
\sup\limits_{y \in B_{r_*}(\overbar{x})} \norm{DF^{\infty}(y)}_\infty \leq \norm{DF^{\infty}}_{e^{-\nu}} \leq 4.2815 \times 10^{-4}.
\]
Combining this result with an interval computation on the finite part we obtain 
\[
DF(\overbar{x}) \in 
[DF^N(\overbar{x}) - 4.2815 \times 10^{-4}, DF^N(\overbar{x}) + 4.2815 \times 10^{-4}], 
\]
and we use this interval enclosure  to compute
\[
\norm{A(A^{\dagger} - DF(\overbar{x})}_{\infty} \in [0,.1538].
\]
Hence, we set $Z_1 = .1538$. 	

\subsubsection*{$Z_2$}
Similarly, we decompose $DF(\overbar{x}) = DF^N(\overbar{x}) + DF^{\infty}(\overbar{x})$ and apply the second part of Lemma \ref{lem:derivativebounds} to obtain the enclosure
\[
\norm{D^2F^{\infty}}_{e^{-\nu}} \leq \frac{36 \pi^2}{\nu^2}\norm{F^{\infty}}_{1} \leq  .0043,
\]
which yields the bound. 
\[
\sup\limits_{y \in B_{r_*}(\overbar{x})} \norm{D^2F^{\infty}(y)}_{\infty} \leq \norm{D^2F^{\infty}}_{e^{-\nu}} \leq .0043.
\]
Turning to the finite part, we apply the formula given in Appendix \ref{sec:boundsAppendix} which yields
\begin{align*}
\sup_{y \in B_{r_*}(\overbar{x})} \norm{D^2F^N(y)}_\infty & =
 \sup_{y \in B_{r_*}(\overbar{x})} \max_{1 \leq i \leq 3} \sum_{j = 1}^{3} \sum_{k = 1}^3 \left |\partial_j \partial_k F_i^N(y) \right| \\
& \leq \max_{1 \leq i \leq 3} \sum_{j = 1}^{3} \sum_{k = 1}^3 \sup_{y \in B_{r_*}} \left |\partial_j \partial_k F_i^N(y) \right| \\
& = \max_{1 \leq i \leq 3} \sum_{j = 1}^{3} \sum_{k = 1}^3 \norm{\partial_j \partial_k F_i^N}_{B_{r_*}(\overbar{x})}.
\end{align*}
For each $i = 1,2,3$, the terms in the sum are computed with interval 
arithmetic and taking the max yields
\[
\sup_{y \in B_{r_*}(\overbar{x})}\norm{D^2F^N(y)}_Q \leq .0151.
\]
Taking these bounds together, we obtain the estimate 
\begin{align*}
\sup_{y \in B_{r_*}(\overbar{x})} \norm{D^2F(y)}_{\infty} & \leq \sup_{y \in B_{r_*}(\overbar{x})} \norm{D^2F^N(y)}_{\infty} + \sup_{y \in B_{r_*}(\overbar{x})} \norm{D^2F^{\infty}(y)}_{\infty} \\
& \leq .0043 + .0151 = .0194.
\end{align*}
Finally, a rigorous computation yields the bound $\norm{A}_\infty \leq 84.6195$ and we obtain the enclosure
\[
\norm{A}\cdot \sup_{y \in B_{r_*}} \norm{D^2F(y)} \in [2.3284,2.3414],
\]
and we set $Z_2 = 2.3414$. 

Finally, we verify that Theorem \ref{prop:finiteDimNewton} holds by computing the radii polynomial,
\[
p(r) = Z_2r^2 - (1-Z_0 - Z_1)r + Y_0 = 2.3414r^2 - .8462r + .0053
\]
which has roots given by $(r_-,r_+) = (.0064,.3550)$. Noting that $r_- < r_*$, we conclude that $p(r) < 0$ for all $r \in [.0064,.007]$. Hence, setting $r = r_- = .0064$, it follows from Theorem  \ref{prop:finiteDimNewton} that there exists a unique 
$\hat{x} = (\hat{s}, \hat{t}, \hat{\sigma}) \in B_r(\overbar{x})$ such that $F(\hat{x}) = 0$.

Next, we verify Lemma \ref{lem:aPosEnergyLemma1} holds. We consider the rectangle, $B_r(\overbar{s},\overbar{t}) \in \intrr^2$, and the interval $B_r(\overbar{\sigma}) \in \intrr$ so that via interval arithmetic we compute the enclosures
\begin{align*}
	\Gamma_4^u(B_r(\overbar{s},\overbar{t})) & \in [-.1629,   -.1619]  \\
\Gamma_4^s(B_r(\overbar{\sigma}),0) & \in [-.1633,   -0.1615] 
\end{align*}
and we conclude that $\Gamma_4^u(\hat{s}, \hat{t})$ has the same sign as $\Gamma_4^s(\hat{\sigma}, 0)$. Thus, by Lemma \ref{lem:aPosEnergyLemma1}, the orbit of $\Gamma^u(\hat s, \hat t)$ is a homoclinic for the saddle focus $\mathbf{x}_0$.


Finally, the transversality is verified using Lemma \ref{lem:aPosEnergyLemma2}.  
First note that invertibility of $DF(\hat{x})$ is a conclusion of Theorem \ref{prop:finiteDimNewton}.
Then the hypotheses of Lemma \ref{lem:aPosEnergyLemma2} are satisfied as soon as
$\nabla E(\Gamma^u(\hat s, \hat t)) \neq 0$. Note that it suffices to prove this for any coordinate. In particular, in 
the second component rigorously compute
\[
\pi_2 \circ \nabla E = \partial_2 E =  \Gamma_2^u(B_r(\overbar{x})) \in  [ -0.0747, -.0618 ],
\]
and $0 \notin [ -0.0747, -.0618 ]$. Thus, we conclude that $\Gamma_2^u(\hat s, \hat t) \neq 0$ and therefore, $\nabla E(\Gamma^u(\hat s, \hat t)) \neq 0$, proving that this homoclinic is transverse. Combining these results, we have proved Theorem \ref{thm:ourThm1} , 
namely that the orbit of $\Gamma^u(\hat s, \hat t)$ is homoclinic to $\mathbf{x}_0$ and transverse in the energy section. 
In other words, the hypothesis for Theorem \ref{thm:devaney} 
are satisfied and we conclude that the CRFBP is chaotic.


\subsection{Bounding transport times}
\label{sec:boundtransporttime}
The global parameterization of the (un)stable manifolds for the saddle-focus at $\mathbf{x_0}$ is the backbone of our proof of chaos. Other methods for validating homoclinic orbits have been taken up which would allow a proof of a transverse homoclinic with significantly less effort. However, our method based on computing global atlas' for the stable and unstable manifolds was chosen in large part because this method can also rule out connections. In other words, our interest in this paper is not only to prove that the CRFBP is chaotic, but also, to show that our method of verifying Devaney's theorem also allows one to rigorously bound the transport times for {\em all} homoclinic connections. This is due to the fact that we find and validate such connections by computing global paramaterizations of the (un)stable manifolds. It follows that the {\em lack} of an intersection between these manifolds after some integration time can serve equally well as a rigorous proof that any connection which does exist must take longer. In this section we make this more precise. 

Suppose $\Gamma^u,\Gamma^s$ are any pair of analytic charts for the unstable/stable manifolds of $\mathbf{x_0}$ respectively. Following our scheme outlined in Section \ref{sec:growingTheManifold}, we obtain $\Gamma^s$ in the form
\[
\Gamma^s(s,t) = \sum_{j = 0}^{M} \sum_{k = 0}^{N} b_{j,k} s^kt^j + h^s(s,t)
\]
where each $a_{j,k} \in \intrr$ and $h^u$ is analytic and satisfying $\norm{h^u}_1 \leq r^u$ for $r^u \in (0,\infty)$. Similarly, we obtain
\[
\Gamma^u(s,t) = \sum_{j = 0}^{M} \sum_{k = 0}^{N} a_{j,k} s^kt^j + h^u(s,t)
\]
where each $b_{j,k} \in \intrr$ and $h^s$ is analytic and satisfying $\norm{h^s}_1 \leq r^s$ for $r^s \in (0,\infty)$. Now, we define 
$F: [-1,1]^4 \to \intrr$ given by 
\[
F(s_1,t_1,s_2,t_2) = \Gamma^s(s_1,t_1) - \Gamma^u(s_2,t_2).
\]
Evidently, if $F$ has no roots on $[-1,1]$, then $\Gamma^s$ and $\Gamma^u$ are non-intersecting charts. The rigorous verification of this follows by evaluating the finite part of $F$ with interval arithmetic and padding by the interval $[-(r^u + r^s), (r^u + r^s)]$. We carried out this computation pairwise for stable and unstable charts contained in the atlases shown on the left side of Figure \ref{fig:CRFBP_connection} until a connection could not be ruled out. In fact, the first instance that this procedure fails occurs for a pair of charts for which a homoclinic connection exists. By rigorously ruling out all pairwise charts with shorter (combined) integration time, we obtain the following result.
\begin{prop}
Let $W^s_{\text{loc}}(\mathbf{x_0})$, $W^u_{\text{loc}}(\mathbf{x_0})$ be the validated local stable/unstable manifolds computed to order 8 as described in Section \ref{sec:localmanifolds} and suppose $\gamma$ is a homoclinic connection for $\mathbf{x_0}$ which exits $W^u_{\text{loc}}(\mathbf{x_0})$ at time $0$ and intersects $W^s_{\text{loc}}(\mathbf{x_0})$ after $t_f < 4.1437$ time units. Then, for some $0 \leq t \leq t_f$, we have $\sqrt{\gamma_2(t)^2 + \gamma_4(t)^2} > 1.7$. 

Equivalently, for every homoclinic to $\mathbf{x_0}$, if the {\em speed} along the orbit has the bound $\sqrt{\dot x^2 + \dot y^2} < 1.7$ for all time, then connection time is bounded below by $4.1437$ time units. 
\end{prop}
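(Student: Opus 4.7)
The plan is to establish this as the contrapositive of a ruling-out statement. The validated atlases for $W^u(\mathbf{x}_0)$ and $W^s(\mathbf{x}_0)$ described in Section \ref{sec:atlas} are organized by integration time along each flowed boundary arc. Every chart $\Gamma^u$ records the total forward advection time from its seed on $\partial W^u_{\text{loc}}(\mathbf{x}_0)$, and likewise $\Gamma^s$ records the backward advection time to $\partial W^s_{\text{loc}}(\mathbf{x}_0)$. Any candidate homoclinic orbit $\gamma$ whose trajectory stays within the spatial region covered by the atlases and has total transit time $t_f < 4.1437$ must cross some unstable chart $\Gamma^u$ and some stable chart $\Gamma^s$ whose advection times sum to at most $t_f$, and consequently must produce a root of the difference map $F(s_1,t_1,s_2,t_2) = \Gamma^s(s_1,t_1)-\Gamma^u(s_2,t_2)$ on $[-1,1]^4$. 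The strategy is therefore to show that the spatial ``envelope'' of the atlas is precisely the set $\{\sqrt{\dot x^2+\dot y^2}\le 1.7\}$, and to exhaustively rule out such roots for every admissible chart pair.

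First I would catalog the finite collection of pairs $(\Gamma^u_i,\Gamma^s_j)$ whose combined recorded advection times are bounded by $4.1437$. For each such pair I would run the pairwise non-intersection test already described in the section: evaluate $F^N$ on the box $[-1,1]^4$ using interval arithmetic, pad the result by the scalar interval $[-(r^u+r^s),(r^u+r^s)]$ using the validated tail bounds $\|h^u\|_1 \le r^u$, $\|h^s\|_1 \le r^s$, and check that $0$ is not contained in the resulting enclosure componentwise. If the test succeeds for every such pair, then there is no pair $(s_1,t_1,s_2,t_2) \in [-1,1]^4$ for which $\Gamma^u_i$ and $\Gamma^s_j$ could coincide.

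Second, I would establish the geometric confinement property of the atlas: that the union of the images of all charts with advection time at most $4.1437$ is contained in the velocity-bounded slab $V := \{(x,\dot x, y, \dot y) : \sqrt{\dot x^2+\dot y^2} \le 1.7\}$. This can be done by rigorously enclosing the $(\dot x,\dot y)$ components of each chart on $[-1,1]^2$ with interval arithmetic (again padded by the chart error bound) and verifying that the enclosure sits inside the disc of radius $1.7$. Combined with the pairwise non-intersection, this shows: \emph{every homoclinic satisfying both} (i) $t_f < 4.1437$ \emph{and} (ii) $\sup_{0\le t\le t_f}\sqrt{\dot x(t)^2+\dot y(t)^2}\le 1.7$ \emph{must live entirely inside the atlas and therefore must produce a validated intersection of some admissible chart pair}, contradicting the previous step. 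The stated proposition is then the contrapositive: if $t_f < 4.1437$, then the velocity must exceed $1.7$ somewhere along $\gamma$.

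The main obstacle is the combinatorial verification: the atlases contain thousands of charts, so the pairwise non-intersection check involves millions of interval box evaluations, each of which must be organized so that the accumulated interval widths (from the truncation bounds $r^u,r^s$, growing with advection time) do not overwhelm the separation of the manifold pieces. Some charts will inevitably be close to the first true homoclinic crossing and will require either finer subdivision of the $[-1,1]^4$ parameter box or local recomputation with tighter error bounds as mentioned at the end of Section \ref{sec:atlas}. A secondary but routine difficulty is bookkeeping: one must keep track, for each chart, of the cumulative advection time back to the local manifold boundary, so that the set of ``admissible'' pairs summing to $<4.1437$ is correctly identified and no pair is omitted.
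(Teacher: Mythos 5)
Your first step coincides with the paper's actual argument: the bound $4.1437$ is obtained exactly by evaluating, with interval arithmetic, the difference map $F(s_1,t_1,s_2,t_2)=\Gamma^s(s_1,t_1)-\Gamma^u(s_2,t_2)$ for every pair of stable/unstable charts whose combined integration time is shorter than that of the first pair that cannot be excluded (which is the pair containing the validated transverse homoclinic), padding the finite part by $[-(r^u+r^s),\,r^u+r^s]$ and checking that $0$ is excluded. The bookkeeping of cumulative advection times and the time-localization (at any $t^*\in[0,t_f]$ the point $\gamma(t^*)$ lies on an unstable chart of advection time $t^*$ and a stable chart of advection time $t_f-t^*$, so the combined time of that pair is $t_f<4.1437$) is exactly what the paper does.

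The gap is in your second step. You propose to verify that every chart with advection time at most $4.1437$ maps into the slab $V=\{\sqrt{\dot x^2+\dot y^2}\le 1.7\}$, i.e.\ the containment (atlas) $\subset$ (slab), and from this you conclude that a homoclinic with $t_f<4.1437$ and speed everywhere $\le 1.7$ ``must live entirely inside the atlas.'' That inference runs in the wrong direction. What the proposition needs is the reverse containment: that the computed atlases \emph{cover} every portion of $W^{u}(\mathbf{x}_0)$, resp.\ $W^{s}(\mathbf{x}_0)$, reachable from the local manifold boundaries within the stated integration times while the speed stays $\le 1.7$ --- equivalently, that the only way an orbit can leave the validated atlas before the relevant time is by attaining speed $>1.7$. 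This is precisely where the constant $1.7$ enters: the adaptive advection is stopped or degraded in the high-velocity regions (near the primaries), so one must know that the uncovered parts of the flowed manifolds are exactly the high-speed parts. Without such a covering statement, a speed-bounded homoclinic could pass through a hole in the atlas (a column of charts whose advection was terminated early because of error blow-up) and would never be forced to produce a root of any $F$ you tested, so no contradiction arises. Verifying that the charts you did compute have velocities inside the disc of radius $1.7$ says nothing about the portions of the manifold you did not compute; you need the completeness of the atlas relative to the speed-$1.7$ region, not the velocity bound on the atlas itself.
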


\begin{figure}[t!]
	\label{fig:CRFBP_numConnections}
	\includegraphics[width = 0.9\textwidth,keepaspectratio=true]{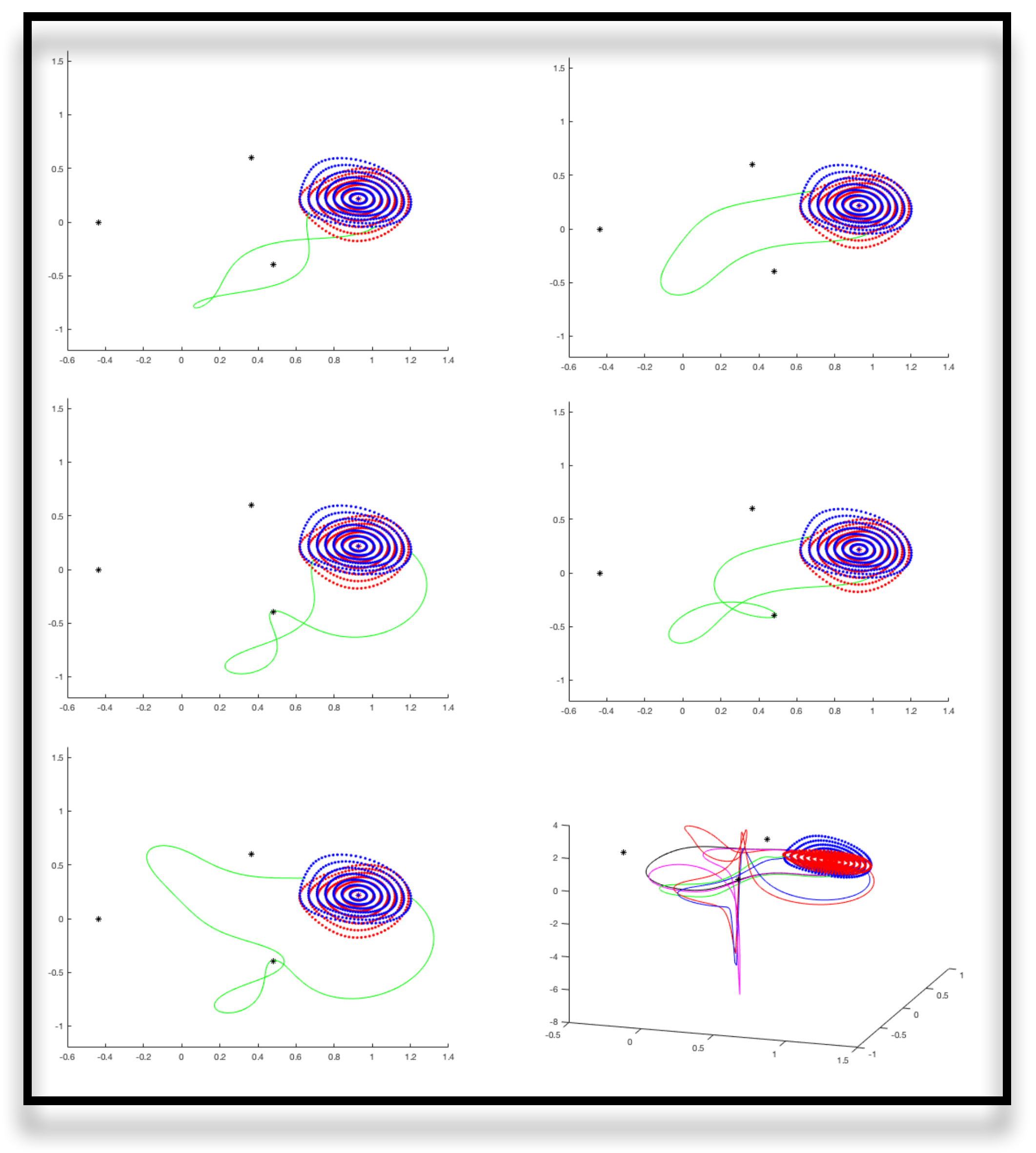}
	\caption{\textbf{Five numerical connecting orbits near the smallest primary:} the 
	top two frames illustrate connections which occur for moderately small velocities 
	(max velocity on the order of 1.5), but
	the velocities of the remaining orbits have comparatively large spikes
	(max closer to 8).  Note that the orbits in the middle two, and the bottom
	left frames pass near a primary.  The bottom right frame show all five orbits
	in the same figure, with the $y$ component of velocity as the third dimension.
        This last frame clearly illustrates the velocity spikes in the orbits.}
\end{figure}

\section{Conclusions} \label{sec:conclusions}
The methods of the present work yield computer assisted proof of energy section-transverse intersections
of the stable unstable manifolds of a saddle-focus equilibrium for the planar CRFBP
as desired, hence verrifying the hypotheses of the Devaney theorem and proving the existence of
chaotic dynamics in the problem.  

From here there are at least two very interesting 
possibilities for future extension of this work.
The first would be to explore the existence of Devaney homoclinic tangles at other parameter values.
Indeed, the parameter values of the present paper were chosen somewhat arbitrarily,
subject only to the constraint that they should avoid symmetries (non-equal masses)
and be non-perturbative (the masses are approximately the same size).  In truth, the precise 
mass values considered in the present work were chosen primarily because 
these mass values were studied numerically in
\cite{MR510556}, where they are used to illustrate
typical qualitative properties of the 
CRTBP in the asymmetric case.  

While the transversality results of the present work imply that the homoclinic 
connections persist for small enough changes in the masses of the primaries,
numerical simulations suggest that the homoclinic connections are typical.
Recent work in \cite{maximeBridge, meRon}
develop methods for mathematically 
rigorous computer assisted existence proofs of continuation
and bifurcation of connecting orbits.  These techniques could be used to study global branches
of homoclinic connections for the CRTBP over a large range of parameters.  
Indeed a very ambitious project along these lines would be to adapt the techniques
just mentioned to prove the existence of chaotic motions for all $m_1 + m_2 + m_3 = 1$
such that the CRFBP admits a saddle focus equilibrium.

Another interesting project will be to make a more exhaustive study of 
connecting orbits at a single set of parameter values, perhaps at the same 
parameter values used in the present work.
Numerical experiments conducted by the authors suggest that the homoclinic connecting orbits
in the problem are plentiful.  Yet many connecting orbits pass near one 
or more of the primary bodies, and these near collisions lead to large deviations in 
velocities.  Several such orbits passing near the smallest primary 
are illustrated in Figure \ref{fig:CRFBP_numConnections}.  
Large velocities make the validated Taylor computations used in the present work 
difficult, often eroding the validated bounds below useful limits or pushing 
the computational time beyond acceptable bounds.

One way to improve numerical outcomes near the collisions is to introduce regularized
coordinates near the primary or primaries, as a way of compactifying the phase space 
of the system.  This is a now standard technique for studying ejection/collision dynamics 
in celestial mechanics problems ever since the work of McGehee \cite{MR0495348, MR562695}.
Changing to regularized coordinates in the vicinity of the collision would lead to a problem 
with much better numerical properties, and allow use to prove the existence of 
connections passing near a collision.  

In fact, the introduction of regularized coordinates suggests 
even more interesting possibilities.  One effect of the compactifying transformation is 
that the finite time collision becomes a fixed point of parabolic stability type in the new 
coordinates.  See the works \cite{MR1085296, MR1342132, MR3111784}
for explicit results in the restricted three body problem.  
Numerical experiments  suggest that there are finite time collisions
between orbits on the stable/unstable manifold in the CRFBP, and 
these would become heteroclinic connections between the saddle-focus
and the regularized parabolic fixed point representing the collision.

Recently there has been a lot of interest in parameterization 
type method for invariant manifolds attached to objects of 
parabolic linear stability.   We refer the interested reader to the work of
\cite{MR2276478, MR2030148, MR3642259}.  If validated numerical 
methods were developed to study the error bounds 
associated with these parameterization methods, then it 
would be possible to use the techniques of the present work to 
prove the existence of transverse heteroclinic connections between 
the collision and the saddle-focus and back (heteroclinic cycles).  
These transverse cycles then lead to symbolic dynamics between the 
collision dynamics and the saddle-focus, providing a great deal of new
global dynamical information about the problem.  
This will make the topic of an upcoming study by the authors.

\section*{Acknowledgments}
The authors wish to thank Robert Devaney, Jan Bouwe van den Berg, 
J.P. Lessard, and Jaime Burgos for helpful conversations.
Both authors were partially supported by NSF grant DMS-1700154 
and by  the Alfred P. Sloan Foundation grant G-2016-7320 
during the work on this research.

\appendix

\section{Norms in $\mathbb{R}^d \backslash \mathbb{C}^d$} \label{sec:norms}
Let $V = \mathbb{R}^d$ or $V= \mathbb{C}^d$,
and write $\mathbf{v} = (v_1, \ldots, v_d)$ to denote an element of $V$.
In all numerical computations and the computer
assisted proofs in the present work endow $V$ with the max-norm 
\begin{equation} \label{eq:normDef}
\| v \|_\infty := \mbox{max}_{1 \leq j \leq d} \,  | v_j |,
\end{equation}
where $| \cdot |$ is the real or complex absolute value as appropriate.  
In fact, when we write $\| \cdot \|$ we always 
mean the max-norm, unless explicitly stated otherwise.  

Let $A$ be a $d \times d$ matrix and and write $\{a_{ij}\}_{1 \leq i,j \leq d}$
to denote the entries of $A$.  For $v \in V$ recall that the matrix-vector product
$Av \in V$  has components given by 
\[
(Av)_i = \sum_{j=1}^d a_{ij} v_j,
\]
for $1 \leq i \leq d$.  Define the \textit{matrix norm} 
\[
\| A \|_M := \max_{1 \leq i \leq d} \sum_{j = 1}^d |a_{ij}|.  
\]
The matrix norm above is in actually the norm on the space of linear operators 
from $V$ to $V$ induced by the max-norm (a fact which plays no further role 
in the present work).  
Observe that for any $v \in V$ we have the useful bound 
\begin{align}
\| A v \| &= \max_{1 \leq i \leq d}  \left| (Av)_i  \right|  \nonumber  \\
&=  \max_{1 \leq i \leq d}  \left|    \sum_{j=1}^d a_{ij} v_j   \right|   \nonumber \\
&\leq  \max_{1 \leq i \leq d}  \sum_{j=1}^d | a_{ij} | \,  |v_j |  \nonumber \\
&\leq    \max_{1 \leq i \leq d}  \sum_{j=1}^d | a_{ij}| \, \| v \|  \nonumber \\
&= \| A\|_M \| v \|.  \label{eq:matNormEst}
\end{align}

In a similar fashion, consider a $d \times d \times d$ \textit{matroid} $B$.
We write $\{b_{ijk}\}_{1 \leq i,j,k \leq d}$ do denote the entries or components of $B$.  
The matroid $B$ defines a $V$-valued bi-linear mapping on 
$V \times V$ with action given by the formula
\[
B(u,v)_i = \sum_{j = 1}^d \sum_{k = 1}^d b_{ijk} u_i v_j, 
\quad \quad \quad \quad u, v \in B.
\]
Here $B(u,v)_i$, for $1 \leq i \leq d$, are the components of $B(u,v) \in V$.
Define the \textit{matroid norm}
\[
\| B \|_Q := \max_{1 \leq i \leq d}  \sum_{j = 1}^d \sum_{k = 1}^d | b_{ijk} |,
\]
and observe that for any $u, v \in V$ we have the bound 
\begin{align}
\| B(u,v) \| &= \max_{1 \leq i \leq d} \left| B(u,v)_i \right| \nonumber \\
&= \max_{1 \leq i \leq d} \left|   \sum_{j = 1}^d \sum_{k = 1}^d b_{ijk} u_i v_j   \right| \nonumber \\
&\leq \max_{1 \leq i \leq d}    \sum_{j = 1}^d \sum_{k = 1}^d  | b_{ijk}| \, |u_i| \, |v_j|   \nonumber \\
&\leq \max_{1 \leq i \leq d}    \sum_{j = 1}^d \sum_{k = 1}^d  | b_{ijk}| \, \| u \| \, \| v \|   \nonumber \\
&= \| B \|_Q  \, \| u \| \, \| v \|.  \label{eq:quadNormEst} 
\end{align}
The subscript $Q$ on the matroid norm is suggestive of the fact that $B(u,u)$ is a 
``quadratic mapping,'' in the sense that 
\[
\| B(u,u) \| \leq C \| u \|^2, \quad \quad \quad \quad \mbox{for all } u \in V.
\]
Indeed, $C = \| B \|_Q$.

\section{Bounds on first and second derivatives} \label{sec:boundsAppendix}
Suppose that $U \subset V$ is an open set, 
$f \colon U \to V$ is a smooth map, and $\mathbf{v} \in V$.
We write $f = (f_1, \ldots, f_d)$ to denote the component maps.  
The first derivative of $f$ at $\mathbf{v}$ is the linear operator 
represented by the $d \times d$ \textit{Jacobian matrix}
$A = Df(\mathbf{v})$ with entries given by 
\[
a_{ij} =  \partial_j f_i(\mathbf{v}),     \quad \quad \quad \quad 1 \leq i, j \leq d, 
\]
and we have the explicit expression 
\begin{equation} \label{eq:jacobianBound}
\| Df(\mathbf{v})\|_M = \max_{1 \leq i \leq d} \sum_{j=1}^d \left|
\partial_j f_i(\mathbf{v}) 
\right|, 
\end{equation}
for the matrix norm of the derivative.

Similarly, the second derivative of $f$ at $\mathbf{v}$ is given by 
(what we might call the \textit{Hessian matroid})
$B = D^2f(\mathbf{v})$ with entries given by 
\[
b_{ijk} = \partial^2_{jk} f_i(\mathbf{v}), \quad \quad \quad \quad 1 \leq i, j, k \leq d, 
\]
and we have the expression  
\begin{equation} \label{eq:hessianEst}
\| D^2 f(\mathbf{v}) \|_Q = \max_{1 \leq i \leq d}  \sum_{j = 1}^d \sum_{k = 1}^d 
\left|
\partial^2_{jk} f_i(\mathbf{u})
\right|,
\end{equation}
for the matroid norm of the second derivative.
As an application of these ideas we have the following estimate. 
\begin{lemma} \label{lem:lipBoundDF}
{\em Let $r_* > 0$,
$\overline{\mathbf{u}} \in U \subset V$, and suppose that 
$\overline{B_{r_*}(\overline{\mathbf{u}})} \subset U$. 
Let $\mathbf{u}, \mathbf{v} \in \overline{B_{r_*}(\overline{\mathbf{u}})}$.
Then 
\begin{equation} \label{eq:2ndDerBound}
\| Df(\mathbf{u}) - Df(\mathbf{v}) \|_M  \leq \sup_{\mathbf{w} \in \overline{B_{r_*}(\overline{\mathbf{u}})}}  
\left\| D^2 f(\mathbf{w}) \right\|_Q \|u - v \|.
\end{equation}
}
\end{lemma}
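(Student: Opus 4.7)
The plan is to reduce the matrix-norm bound to a bilinear estimate for $D^2 f$ by integrating along the straight-line segment from $\mathbf{v}$ to $\mathbf{u}$. Since the closed ball $\overline{B_{r_*}(\overline{\mathbf{u}})}$ is convex, the curve $\phi(t) := \mathbf{v} + t(\mathbf{u}-\mathbf{v})$, $t \in [0,1]$, lies entirely in the ball, and in particular $f$ is $C^2$ along $\phi$. This is the geometric fact that makes the rest of the argument routine.

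First I would fix an arbitrary test vector $\mathbf{x} \in V$ and consider the $V$-valued function $g(t) := Df(\phi(t))\mathbf{x}$ for $t \in [0,1]$. By the chain rule, $g'(t) = D^2 f(\phi(t))(\mathbf{u}-\mathbf{v},\mathbf{x})$, interpreting the Hessian matroid as the bilinear mapping described in Appendix \ref{sec:norms}. Then the fundamental theorem of calculus, applied componentwise, yields
\[
\bigl(Df(\mathbf{u}) - Df(\mathbf{v})\bigr)\mathbf{x} \;=\; g(1) - g(0) \;=\; \int_0^1 D^2 f(\phi(t))(\mathbf{u}-\mathbf{v},\mathbf{x})\,dt.
\]
Taking the max-norm, using that $\|\cdot\|$ passes inside the integral, and invoking the bilinear estimate \eqref{eq:quadNormEst} gives
\[
\bigl\|(Df(\mathbf{u}) - Df(\mathbf{v}))\mathbf{x}\bigr\| \;\leq\; \int_0^1 \|D^2 f(\phi(t))\|_Q\,\|\mathbf{u}-\mathbf{v}\|\,\|\mathbf{x}\|\,dt \;\leq\; \sup_{\mathbf{w} \in \overline{B_{r_*}(\overline{\mathbf{u}})}} \|D^2 f(\mathbf{w})\|_Q\,\|\mathbf{u}-\mathbf{v}\|\,\|\mathbf{x}\|,
\]
since $\phi(t) \in \overline{B_{r_*}(\overline{\mathbf{u}})}$ for all $t \in [0,1]$. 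Finally, recognizing that the induced matrix norm satisfies $\|A\|_M = \sup_{\|\mathbf{x}\| \leq 1}\|A\mathbf{x}\|$ for the max-norm (as recalled in Appendix \ref{sec:norms}), dividing by $\|\mathbf{x}\|$ and taking the supremum over unit vectors produces the claimed bound.

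There is no real obstacle here; the only mild care needed is justifying the componentwise integration (which requires only the continuity of $t \mapsto D^2 f(\phi(t))$ on $[0,1]$, inherited from $f \in C^2(U)$) and correctly reading the bilinear action out of the matroid convention of Appendix \ref{sec:norms}. An alternative, purely scalar, route would be to apply the one-dimensional mean value theorem entrywise to each partial derivative $\partial_j f_i$ along the segment $\phi$ and then sum using the explicit formulas \eqref{eq:jacobianBound} and \eqref{eq:hessianEst}; this avoids vector-valued integrals but produces essentially the same bound. Either version gives the lemma in a few lines.
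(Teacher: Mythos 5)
Your argument is correct, but it follows a different route from the paper's. You integrate: fixing a test vector $\mathbf{x}$ and writing $(Df(\mathbf{u})-Df(\mathbf{v}))\mathbf{x}=\int_0^1 D^2f(\phi(t))(\mathbf{u}-\mathbf{v},\mathbf{x})\,dt$ along the segment, then use the bilinear estimate \eqref{eq:quadNormEst} and the fact that $\|\cdot\|_M$ is the operator norm induced by the max-norm. The paper instead works entrywise: it applies the one-dimensional mean value theorem to each scalar entry $\partial_j f_i$ along the segment, obtaining (possibly different) intermediate points $\tilde{\mathbf{v}}_{ij}$ for each $(i,j)$, and then sums directly using the explicit row-sum formulas \eqref{eq:jacobianBound} and \eqref{eq:hessianEst}; this is exactly the ``alternative, purely scalar, route'' you sketch in your last paragraph. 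The trade-off is minor: your version needs the vector-valued fundamental theorem of calculus (justified by continuity of $t\mapsto D^2f(\phi(t))$, as you note) and the induced-norm characterization of $\|\cdot\|_M$, which the appendix states but never otherwise uses, in exchange for a cleaner single intermediate object (the integral) instead of a family of mean-value points; the paper's version is more elementary and stays entirely within the explicit norm formulas, at the cost of having to absorb the entry-dependent points $\tilde{\mathbf{v}}_{ij}$ into the supremum over the ball---which both arguments must take anyway, so the final bound is identical. Do make sure, when invoking \eqref{eq:quadNormEst}, that you read the bilinear action of the Hessian matroid in the intended way (the $(j,k)$ indices pairing with the two arguments), since the appendix's displayed formula for $B(u,v)_i$ has an obvious index slip; your usage is the correct one.
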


\begin{proof}
Note that 
$f$ and all its partial derivatives are defined and continuous on 
an open set containing $\overline{B_{r_*}(\overline{\mathbf{u}})}$,
so that it makes sense to talk about partial derivatives defined in the 
closed ball.  Observe that the matrix $Df(\mathbf{u}) - Df(\mathbf{v})$ has 
entries
\[
\left[Df(\mathbf{u}) - Df(\mathbf{v}) \right]_{ij} = \partial_j f_i(\mathbf{u}) - 
\partial_j f_i(\mathbf{v}) =  \partial_j \left( f_i(\mathbf{u}) - 
f_i(\mathbf{v}) \right), 
\]
and that by the mean value theorem there is for each $1 \leq i, j \leq d$ 
a $\tilde{\mathbf{v}}_{ij} \in \overline{B_{r_*}(\overline{\mathbf{u}})}$ with 
\begin{itemize}
\item $\tilde{\mathbf{v}}_{ij}$ lies on the line segment connecting $\mathbf{u}$ to $\mathbf{v}$, 
\item and, 
\begin{align*}
 \frac{\partial}{\partial v_j} \left( f_i(\mathbf{u}) - 
f_i(\mathbf{v}) \right) &= \left< \nabla \partial_j f_i(\mathbf{v}_{ij}), \mathbf{u} - \mathbf{v} \right> \\
&= \sum_{k= 1}^d \partial_k \partial_{j} f_i(\mathbf{v}_{ij}) (u_k - v_k) \\
&= \sum_{k= 1}^d \partial^2_{jk} f_i(\mathbf{v}_{ij}) (u_k - v_k),
\end{align*}
by the equality of mixed partials.  
\end{itemize}
Then 
\begin{align*}
\| Df(\mathbf{u}) - Df(\mathbf{v}) \|_M &= \max_{1 \leq i \leq d} \sum_{j=1}^d 
\left|
\left[Df(\mathbf{u}) - Df(\mathbf{v}) \right]_{ij}
\right|  \\
&= \max_{1 \leq i \leq d} \sum_{j=1}^d 
\left|
 \sum_{k= 1}^d \partial^2_{jk} f_i(\mathbf{v}_{ij}) (u_k - v_k)
\right| 
\end{align*}
\begin{align*}
&\leq  \max_{1 \leq i \leq d} \sum_{j=1}^d 
 \sum_{k= 1}^d \left| \partial^2_{jk} f_i(\mathbf{v}_{ij}) \right| \| \mathbf{u} - \mathbf{v} \| 
\end{align*}
\begin{align*}
 & \leq \left( \max_{1 \leq i \leq d} \sum_{j=1}^d 
 \sum_{k= 1}^d \sup_{\mathbf{w}_{ij} \in \overline{B_{r_*}(\overline{u})}} 
 \left| \partial^2_{jk} f_i(\mathbf{w}_{ij}) \right|  \right)  \| \mathbf{u} - \mathbf{v} \| 
\end{align*}
\begin{align*}
&\leq \sup_{\mathbf{w} \in \overline{B_{r_*}(\overline{u})}}  \left( \max_{1 \leq i \leq d} \sum_{j=1}^d 
 \sum_{k= 1}^d 
 \left| \partial^2_{jk} f_i(\mathbf{w}) \right|  \right)  \| \mathbf{u} - \mathbf{v} \| \\
&= \sup_{\mathbf{w} \in \overline{B_{r_*}(\overline{u})}} \| D^2 f(\mathbf{w}) \|_Q \, 
 \| \mathbf{u} - \mathbf{v} \|,
\end{align*}
as desired.

\end{proof}

\section{Some explicit formulas for the CRFBP}  \label{sec:CRFBP_basics}
Let
\[
K = m_2(m_3 - m_2) + m_1(m_2 + 2 m_3).
\]
The precise positions of the primaries are given by 
the locations of the primaries are denoted by   
\[
p_1 = (x_1, y_1), \quad \quad 
p_2 = (x_2, y_2), \quad \quad
\text{and} \quad \quad
p_3 = (x_3, y_3),
\]
where we have
\begin{align*}
x_1 &=   \frac{-|K| \sqrt{m_2^2 + m_2 m_3 + m_3^2}}{K},  \\
y_1 &=   0  
\end{align*}
\begin{align*}
x_2 &=  \frac{|K|\left[(m_2 - m_3) m_3 + m_1 (2 m_2 + m_3)  \right]}{
2 K \sqrt{m_2^2 + m_2 m_3 + m_3^2} }  \\
y_2 & =  \frac{-\sqrt{3} m_3}{2 m_2^{3/2}} \sqrt{\frac{m_2^3}{m_2^2 + m_2 m_3 + m_3^2}}
\end{align*}
\begin{align*}
x_3 &=  \frac{|K|}{2 \sqrt{m_2^2 + m_2 m_3 + m_3^2}}  \\
y_3 &=  \frac{\sqrt{3}}{2 \sqrt{m_2}} \sqrt{\frac{m_2^3}{m_2^2 + m_2 m_3 + m_3^2}}.
\end{align*}

Let $\Omega \colon U \to \mathbb{C}$ be the effective potential function defined 
by Equation \eqref{eq:CRFBP_potential}.
Then
\[
\frac{\partial}{\partial x} \Omega = 
 x - \frac{m_1(x - x_1)}{r_1(x,y)^3}
- \frac{m_2(x - x_2)}{r_2(x,y)^3} - \frac{m_3(x - x_3)}{r_3(x,y)^3},
\]
and
\[
\frac{\partial}{\partial y} \Omega = 
 y - \frac{m_1(y - y_1)}{r_1(x,y)^3}
- \frac{m_2(y - y_2)}{r_2(x,y)^3} - \frac{m_3(y - y_3)}{r_3(x,y)^3},
\]
are the terms appearing explicitly in the vector field $f$ defined in
Equation \eqref{eq:SCRFBP}.

To study the derivative of the vector field $f$ we need the 
partial derivatives of $\Omega_{x,y}$, and these are given by the quantities
\[
g_{11}(x,y) :=  \frac{\partial}{\partial x} \Omega_x(x,y) = 
\]
\[
1 + \frac{m_1(2(x - x_1)^2 - (y - y_1)^2)}{\sqrt{(x - x_1)^2 + (y - y_1)^2}} 
+  \frac{m_2(2(x - x_2)^2 - (y - y_2)^2)}{\sqrt{x - x_2)^2 + (y - y_2)^2}}
+ \frac{m_3(2(x - x_3)^2 - (y - y_3)^2)}{\sqrt{(x - x_3)^2 + (y - y_3)^2}},
\]
\[
g_{12}(x,y) = g_{21}(x,y) := \frac{\partial}{\partial y} \Omega_x(x,y) =  \frac{\partial}{\partial x} \Omega_y(x,y) = 
\]
\[
\frac{3 m_1(x - x_1)(y - y_1)}{\sqrt{(x - x_1)^2 + (y - y_1)^2}} +
\frac{3 m_2(x - x_2)(y - y_2)}{\sqrt{(x - x_2)^2 + (y - y_2)^2}}  +
\frac{3 m_3(x - x_3)(y - y_3)}{\sqrt{(x - x_3)^2 + (y - y_3)^2}}, 
\]
and
\[
g_{22}(x,y) :=  \frac{\partial}{\partial y} \Omega_y(x,y)  = 
\]
\[
1 + \frac{m_1(2(y - y_1)^2 - (x - x_1)^2)}{\sqrt{(x - x_1)^2 + (y - y_1)^2}} 
+  \frac{m_2(2(y - y_2)^2 - (x - x_2)^2)}{\sqrt{(x - x_2)^2 + (y - y_2)^2}}
+ \frac{m_3(2(y - y_3)^2 - (x - x_3)^2)}{\sqrt{(x - x_3)^2 + (y - y_3)^2}}.
\]
When there is no possibility of confusion we sometimes 
suppress the $(x,y)$ dependence and simply write $g_{ij}$.  

Computer assisted proofs based on Theorem (REF) require bounds on the second derivative
of the vector field $f$, and of course this requires us to consider third derivatives of $\Omega$.
We record that the second partials of $\Omega_{x,y}$ are given by the expressions  
\begin{align*}
\frac{\partial^2}{\partial x^2} \Omega_x(x,y) = 
\sum_{j = 1}^3 \left(
\frac{4 m_j (x - x_j)}{r_j(x,y)^5} - 
\frac{5 m_j (x-x_j)(2 (x-x_j)^2 - (y-y_j)^2)}{r_j(x,y)^7}
\right),
\end{align*}

\begin{align*}
\frac{\partial^2}{\partial y \partial x} \Omega_x(x,y) = 
- \sum_{j = 1}^3 
\left(\frac{2 m_j (y-y_j)}{r_j(x,y)^5} + 
\frac{5 m_j (y-y_j)(2 (x-x_j)^2 - (y-y_j)^2)}{r_j(x,y)^7}  \right),
\end{align*}

\begin{align*}
\frac{\partial^2}{\partial x \partial y} \Omega_x(x,y) =  \frac{\partial^2}{\partial y \partial x} \Omega_x(x,y),
\end{align*}

\begin{align*}
\frac{\partial^2}{\partial x^2} \Omega_y(x,y) = 
\frac{\partial^2}{\partial x \partial y} \Omega_x(x,y),
\end{align*}

\begin{align*}
\frac{\partial^2}{\partial x \partial y} \Omega_y(x,y) = 
 \sum_{j = 1}^3 \left(
\frac{5 m_j (x-x_j)(2(y-y_j)^2 - (x-x_j))}{r_j(x,y)^7} - 
\frac{2 m_j (x-x_j)}{r_j(x,y)^5} \right),  
\end{align*}

\begin{align*}
\frac{\partial^2}{\partial y^2 } \Omega_x(x,y) = \frac{\partial^2}{\partial y \partial x} \Omega_x(x,y),
\end{align*}

\begin{align*}
\frac{\partial^2}{\partial y \partial x} \Omega_y(x,y) = 
\frac{\partial^2}{\partial y^2} \Omega_x(x,y),
\end{align*}

and
\begin{align*}
\frac{\partial^2}{\partial y^2 } \Omega_y(x,y) = 
 \sum_{j = 1}^3  \left(
\frac{4 m_j (y-y_j)}{r_j(x,y)^5} - 
\frac{5 m_j (y-y_j)(2(y-y_j)^2 - (x-x_j)^2)}{r_j(x,y)^7} \right).  
\end{align*}

By inspecting Equation \eqref{eq:SCRFBP} we see that an 
equilibrium solution of $f$ must have that $\dot x = \dot y  = 0$. 
Then Jacobian matrix of $f$ is  
\[
Df(x, 0, y, 0) = 
\left(
\begin{array}{cccc}
   0   &   1     &  0       &  0     \\
   g_{11}   &   0     &  g_{12}       &   2    \\
   0   &  0      &  0       &   1    \\
  g_{21}    &   -2     &  g_{22}       &  0     \\
\end{array}
\right),
\]
so the characteristic polynomial is 
\begin{equation} \label{eq:charEq_CRFBP}
\mbox{det}\left(Df(x, 0, y, 0) - \lambda \mbox{Id} \right) = 
\lambda^4 + \lambda^2 (4 - g_{11} - g_{22}) + (g_{11} g_{22} - g_{12}^2). \nonumber 
\end{equation}
Solving the equation above, we see that the eigenvalues are 
\begin{equation} \label{eq:CRFBP_eigs}
\lambda_{1,2,3,4} = \pm  \sqrt{\frac{-(4 - g_{11} - g_{22}) \pm \sqrt{(4 - g_{11} - g_{22})^2 - 4 (g_{11} g_{22} - g_{12}^2)}}{2}}.
\end{equation}
The following Lemma provides the associated eigenvectors.

\begin{lemma} \label{lem:eigVectLemma}  {\em
Let $\lambda \in \mathbb{C}$ be a nonzero eigenvalue for $Df(\mathbf{x}_0)$, 
and define the vectors 
\[
v_1 = \left(
\begin{array}{c}
1 \\
\lambda \\
0 \\
0
\end{array}
\right), 
\quad \quad \quad \mbox{and} \quad \quad \quad 
v_2 = \left(
\begin{array}{c}
0 \\
0 \\
1 \\
\lambda
\end{array}
\right).
\] 
Let $s \in \mathbb{C}$, with $s \neq 0$, and define 
\[
r = - s \frac{g_{12} + 2 \lambda}{g_{11} - \lambda^2}.
\]
Then  
\[
\xi = r v_1 + s v_2, 
\]
is an eigenvector for $Df(\mathbf{x}_0)$ associated with the eigenvalue $\lambda$.}
\end{lemma}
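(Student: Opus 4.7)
The plan is to verify directly that $(Df(\mathbf{x}_0) - \lambda I)\xi = 0$, exploiting the block-like structure of the Jacobian matrix displayed just above Equation \eqref{eq:CRFBP_eigs}. First, I would expand
\[
\xi = r v_1 + s v_2 = (r,\, r\lambda,\, s,\, s\lambda)^T,
\]
and apply $Df(\mathbf{x}_0)$ row by row. The first and third rows of $Df$ simply pick off the second and fourth components of $\xi$, so the first and third components of $Df(\mathbf{x}_0)\xi - \lambda \xi$ are $r\lambda - \lambda\cdot r = 0$ and $s\lambda - \lambda\cdot s = 0$. These components vanish automatically and impose no constraint; they reflect the fact that the ansatz $\xi$ already incorporates the "position/velocity" structure of the linearized CRFBP.

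Next I would turn to the second and fourth components, which are the substantive ones. Using $g_{21} = g_{12}$, one computes that the second component of $(Df(\mathbf{x}_0) - \lambda I)\xi$ equals
\[
r(g_{11} - \lambda^2) + s(g_{12} + 2\lambda),
\]
while the fourth component equals
\[
r(g_{12} - 2\lambda) + s(g_{22} - \lambda^2).
\]
Setting the second component to zero, and assuming $g_{11} - \lambda^2 \neq 0$ (which I will flag as an implicit nondegeneracy condition; it holds generically and can be verified by interval arithmetic in our parameter regime), one solves uniquely for the ratio $r/s$ and obtains precisely the formula
\[
r = -s\,\frac{g_{12} + 2\lambda}{g_{11} - \lambda^2}
\]
stated in the lemma. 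So the definition of $r$ is dictated by the requirement that the second component vanish.

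The key step, and the only one requiring real thought, is to see why the fourth component then vanishes automatically. Substituting the formula for $r$ into the fourth-component expression and clearing the denominator, it suffices to show
\[
-(g_{12}+2\lambda)(g_{12}-2\lambda) + (g_{11}-\lambda^2)(g_{22}-\lambda^2) = 0,
\]
which after expansion is exactly
\[
\lambda^4 + \lambda^2(4 - g_{11} - g_{22}) + (g_{11}g_{22} - g_{12}^2) = 0.
\]
But this is the characteristic polynomial of $Df(\mathbf{x}_0)$ displayed in Equation \eqref{eq:charEq_CRFBP}, which vanishes by hypothesis since $\lambda$ is an eigenvalue. Hence both nontrivial components of $(Df(\mathbf{x}_0)-\lambda I)\xi$ vanish, and $\xi$ is an eigenvector as claimed. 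I would close by noting that $\xi \neq 0$, since $s \neq 0$ forces the third component $s$ of $\xi$ to be nonzero, so the eigenvector is genuine rather than trivial. The anticipated obstacle — reconciling the two independent linear conditions produced by rows two and four of the eigenvalue equation — reduces to the characteristic polynomial identity, which is precisely what makes $\lambda$ an eigenvalue in the first place.
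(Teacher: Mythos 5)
Your proposal is correct and follows essentially the same route as the paper: both reduce the eigenvector equation to the two substantive conditions in $(r,s)$ coming from rows two and four, define $r$ from the first, and observe that the second is then automatic because the $2\times 2$ determinant $(g_{11}-\lambda^2)(g_{22}-\lambda^2)-(g_{12}+2\lambda)(g_{12}-2\lambda)$ is exactly the characteristic polynomial, which vanishes since $\lambda$ is an eigenvalue. Your explicit flagging of the nondegeneracy condition $g_{11}-\lambda^2\neq 0$ and of $\xi\neq 0$ is a small but welcome addition that the paper leaves implicit.
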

\begin{proof}
Observe that if $\xi = r v_1 + s v_2$ then  the equation 
\[
\left[Df(\mathbf{x}_0) - \lambda \mbox{Id} \right] \xi = 
\left(
\begin{array}{cccc}
-\lambda & 1 & 0 & 0 \\
g_{11} & - \lambda  & g_{12} & 2 \\
0 & 0 & - \lambda & 1 \\
g_{12} & -2 & g_{22} & - \lambda
\end{array}
\right)
\left(
\begin{array}{c}
r \\
\lambda r \\
s \\
\lambda s
\end{array}
\right),
\]
is equivalent to the system of equations 
\begin{eqnarray*}
-s \lambda + s \lambda & = 0 \\
s g_{11} - \lambda^2 s + r g_{12} + 2r \lambda & = 0 \\
-r \lambda + r \lambda &= 0 \\
s g_{12} - 2 s \lambda + r g_{22} - r \lambda^2 &=0.
\end{eqnarray*}
Since the first and third equations are trivially satisfied by any $r, s \in \mathbb{C}$ 
the system reduces to $2 \times 2$ the homogeneous system  
\[
\left[
\begin{array}{cc}
g_{11} - \lambda^2 & g_{12} + 2 \lambda \\
g_{12} - 2 \lambda & g_{22} - \lambda^2 
\end{array}
\right]
\left(
\begin{array}{c}
r \\
s
\end{array}
\right)
= 
\left(
\begin{array}{c}
0 \\
0
\end{array}
\right).
\]
The equation has nontrivial solutions if and only if one of the the rows is a multiple of the other, 
which happens if and only if the determinant of the $2 \times 2$ coefficient matrix 
has zero determinant.   But we now have that  
\[
\mbox{det}\left(
\left[
\begin{array}{cc}
g_{11} - \lambda^2 & g_{12} + 2 \lambda \\
g_{12} - 2 \lambda & g_{22} - \lambda^2 
\end{array}
\right]
\right) = \lambda^4 + (4 - g_{11} - g_{22}) \lambda^2 + g_{11} g_{22} - g_{12}^2, 
\]
which is the characteristic equation for $Df(\mathbf{x}_0)$.  This determinant
is zero as $\lambda$ is an eigenvalue by hypothesis.  
Now it follows that second row of the $2 \times 2$ homogeneous
systems depends on the first.
We obtain a solution $(r, s)$ by taking $s$ free and choosing 
\[
r = - s \frac{g_{12} + 2 \lambda}{g_{11} - \lambda^2}.
\]
Adjusting the choice of $s$ determines the length of $\xi$.  
\end{proof}

\section{Recovering the CRFBP from the polynomial problem} \label{sec:polyFacts}
The formal series computations discussed so far focus on the
polynomial vector field $F \colon \mathbb{C}^7 \to \mathbb{C}^7$  
defined in Equation \eqref{eq:bigPoly}.  In this section we justify the use of 
the polynomial problem, and show how to recover the dynamics for the 
CRFBP.

Let $U \subset \mathbb{C}^4$ be as defined in Equation \eqref{eq:defDomain_U}, and
$f \colon U \to \mathbb{C}^4$ be the analytic vector field for the CRFBP
defined in Equation \eqref{eq:SCRFBP}.  
Define the nonlinear map $R \colon U  \to \mathbb{C}^7$
by 
\[
R(x, \dot x, y, \dot y) := 
\left(
\begin{array}{c}
x \\
\dot x \\
y \\
\dot y \\
\frac{1}{\sqrt{(x-x_1)^2 + (y - y_1)^2}} \\
\frac{1}{\sqrt{(x-x_2)^2 + (y - y_2)^2}} \\
\frac{1}{\sqrt{(x-x_3)^2 + (y - y_3)^2}}
\end{array}
\right).
\]

For $\mathbf{u} = (u_1, u_2, u_3, u_4, u_5, u_6, u_7) \in \mathbf{C}^7$ define 
the projections $\pi \colon \mathbb{C}^7 \to \mathbb{C}^4$ and $\pi^{\perp} \colon \mathbb{C}^7 \to \mathbb{C}^3$
by 
\[
\pi (\mathbf{u}) = 
\left(
\begin{array}{c}
u_1 \\
u_2 \\
u_3 \\
u_4
\end{array}
\right), 
\quad \quad \quad \mbox{and} \quad \quad \quad 
\pi^\perp (\mathbf{u}) = 
\left(
\begin{array}{c}
u_4 \\
u_5 \\
u_7
\end{array}
\right).
\]
For any $\mathbf{u} \in \mathbb{C}^7$ we have the decomposition
$\mathbf{u} = (\pi(\mathbf{u}), \pi^\perp(\mathbf{u}))$.

Observe that $R$ and $\pi$ satisfy the following identity
\[
\pi (R(\mathbf{x})) = \mathbf{x},
\]
for any $\mathbf{x} \in \mathbb{C}^4$.
Now define the set
\[
\mathcal{S} :=
\left\{ \mathbf{u} \in \mathbf{C}^7 \, | \,
\mathbf{u} = R(\mathbf{x}) \mbox{ for some }
\mathbf{x} \in U \subset \mathbb{C}^4 \right\}.
\]
That is, $\mathcal{S} = \mbox{image}(R) = R(U)$.
Observe that 
$\mathbf{u} \in \mathcal{S}$ if and only if 
\begin{equation} \label{eq:u_is_Rpi_u}
\mathbf{u} = R(\pi \mathbf{u}),
\end{equation}
an identity which shows that $R$ is one to one.
It is worth noting that $R$ is differentiable on $U$ and that 
\begin{equation} \label{eq:DiffR}
DR(\mathbf{x}) = \left(
\begin{array}{c}
\mbox{Id} \\
\nabla  r_1^{-1}(\mathbf{x}) \\
\nabla  r_2^{-1}(\mathbf{x}) \\
\nabla  r_3^{-1}(\mathbf{x})
\end{array}
\right), 
\quad \quad \quad \quad \mathbf{x} \in U.
\end{equation}
The importance of the function $R$ and the 
set $\mathcal{S}$ are illustrated by the following Lemma, 
which one proves by direct calculation.

\begin{lemma} \label{lem:theVectorFields}{\em
For all $\mathbf{x} \in U$ we have that 
\[
\pi F(R(\mathbf{x})) = f(\mathbf{x}).
\]
That is, the composition of $F$ with $R$ recovers the 
CRFBP field as its first four components.  }
\end{lemma}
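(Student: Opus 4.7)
The plan is direct verification by substitution, since the identity $\pi F(R(\mathbf{x})) = f(\mathbf{x})$ is a pointwise algebraic equality between two explicit expressions. First I would fix $\mathbf{x} = (x, \dot x, y, \dot y) \in U$ and write out $\mathbf{u} := R(\mathbf{x})$ componentwise, so that $u_1 = x$, $u_2 = \dot x$, $u_3 = y$, $u_4 = \dot y$, and $u_{4+j} = 1/r_j(x,y)$ for $j=1,2,3$, with $r_j$ as defined in Equation \eqref{eq:def_r}. In particular this gives the key substitution $u_{4+j}^3 = 1/r_j(x,y)^3$, which is what bridges the polynomial field $F$ in Equation \eqref{eq:bigPoly} and the rational field $f$ in Equation \eqref{eq:SCRFBP}.

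Next I would check the four components of $\pi F(R(\mathbf{x}))$ one at a time, matching them against the definition of $f$. Components one and three are immediate: $F_1(\mathbf{u}) = u_2 = \dot x$ and $F_3(\mathbf{u}) = u_4 = \dot y$, in agreement with the first and third slots of $f$. For the second component, substituting the formulas for $u_1, u_4, u_5, u_6, u_7$ into the second row of Equation \eqref{eq:bigPoly} and collecting terms yields
\[
2\dot y + x - \frac{m_1(x-x_1)}{r_1(x,y)^3} - \frac{m_2(x-x_2)}{r_2(x,y)^3} - \frac{m_3(x-x_3)}{r_3(x,y)^3},
\]
which is precisely $2\dot y + \Omega_x(x,y)$ by the explicit formula for $\Omega_x$ recorded in Appendix \ref{sec:CRFBP_basics}, matching the second slot of $f$. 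The fourth component is treated by the same calculation with $x\leftrightarrow y$ and $x_j \leftrightarrow y_j$, producing $-2\dot x + \Omega_y(x,y)$.

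There is no serious obstacle here; the only thing to be careful about is the bookkeeping of which cubic monomial $u_{4+j}^3$ carries which mass $m_j$ and which primary coordinate $x_j$ or $y_j$. Since the verification is purely algebraic and does not require any smoothness, invertibility, or domain arguments beyond $\mathbf{x} \in U$ (which ensures the $r_j$ are nonzero so that $R$ is defined), the proof concludes immediately after the componentwise check.
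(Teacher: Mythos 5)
Your proposal is correct and is exactly the argument the paper intends: the paper dismisses this lemma with the remark that ``one proves [it] by direct calculation,'' and your componentwise substitution of $u_1=x$, $u_2=\dot x$, $u_3=y$, $u_4=\dot y$, $u_{4+j}=1/r_j(x,y)$ into Equation \eqref{eq:bigPoly} is precisely that calculation. Nothing further is needed.
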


\subsection{Conjugacy} \label{sec:autoDiffConj}
We are interested in the relationship between the dynamics generated by the 
vector fields $f$ and $F$.  The following identity will be useful.

\begin{lemma} \label{lem:autoDiffinfConj} {\em
For all $\mathbf{x} \in U$, 
\begin{equation} \label{eq:autoDiffInfConj}
DR(\mathbf{x}) f(\mathbf{x}) = F(R(\mathbf{x})).
\end{equation}}
\end{lemma}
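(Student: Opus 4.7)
My plan is to prove the identity componentwise, separating the first four coordinates (the ``physical'' coordinates $x,\dot x,y,\dot y$) from the last three (the auxiliary coordinates $u_5,u_6,u_7$ introduced by the automatic differentiation). The key observation is that $DR(\mathbf{x})f(\mathbf{x})$ is, by the chain rule, precisely the time derivative of $R(\mathbf{x}(t))$ along a solution $\mathbf{x}(t)$ of $\mathbf{x}' = f(\mathbf{x})$, and that $F$ was designed so that this is exactly what the last three coordinates of $F(R(\mathbf{x}))$ equal.

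For the first four coordinates, I would apply the projection $\pi$ to both sides. From the explicit block form of $DR(\mathbf{x})$ in Equation \eqref{eq:DiffR}, the top block is the $4\times 4$ identity, so $\pi(DR(\mathbf{x})f(\mathbf{x})) = f(\mathbf{x})$. By Lemma \ref{lem:theVectorFields} we also have $\pi F(R(\mathbf{x})) = f(\mathbf{x})$, so the first four components agree.

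For the remaining three coordinates, I would apply $\pi^\perp$ and check each component by direct computation. Again using Equation \eqref{eq:DiffR}, the $j$th of these components ($j = 5,6,7$) of $DR(\mathbf{x})f(\mathbf{x})$ equals $\nabla r_{j-4}^{-1}(\mathbf{x})\cdot f(\mathbf{x})$. Since $r_k^{-1}$ depends only on the positional variables $(x,y)$, its gradient in the full $\mathbb{R}^4$ has zeros in the velocity slots, so
\[
\nabla r_k^{-1}(\mathbf{x})\cdot f(\mathbf{x})
= (\partial_x r_k^{-1})\dot x + (\partial_y r_k^{-1})\dot y
= -\frac{(x-x_k)\dot x + (y-y_k)\dot y}{r_k^3}.
\]
Substituting the coordinate identifications $u_1 = x$, $u_2 = \dot x$, $u_3 = y$, $u_4 = \dot y$, and $u_{k+4} = 1/r_k$ that define $R$, this expression becomes
\[
-(u_1 - x_k)u_2 u_{k+4}^3 - (u_3 - y_k)u_4 u_{k+4}^3,
\]
which is precisely the $(k+4)$th component of $F$ read off from Equation \eqref{eq:bigPoly}. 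This matches $\pi^\perp F(R(\mathbf{x}))$ for $k = 1,2,3$, completing the verification.

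There is no real obstacle here beyond bookkeeping: the lemma is essentially a restatement that the last three equations of the polynomial system \eqref{eq:bigPoly} were obtained by differentiating $1/r_k$ along the flow of $f$. The cleanest presentation organizes the computation as a chain-rule identity and then just reads off the matching terms. A minor subtlety worth noting is that the formula $DR$ in \eqref{eq:DiffR} is valid on all of $U$ precisely because the $r_k$ are bounded away from zero there, so $R$ is smooth on $U$ and the chain rule applies unambiguously.
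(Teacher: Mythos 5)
Your proposal is correct and follows essentially the same route as the paper: the first four components are handled via the identity block of $DR$ together with Lemma \ref{lem:theVectorFields}, and the last three by the direct computation identifying $\left< \nabla r_{k}^{-1}(\mathbf{x}), f(\mathbf{x}) \right>$ with $F_{k+4}(R(\mathbf{x}))$. The only difference is cosmetic (you compute $DR\,f$ and match it to $F\circ R$, while the paper expands $F\circ R$ and matches it to $DR\,f$), so no further comment is needed.
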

\begin{proof}
First note that for any $\mathbf{x} \in U$ we have that  
\begin{align*}
 \pi F(R(\mathbf{x})) &= f(\mathbf{x}) \\
&= \mbox{Id} f(\mathbf{x}) \\
& = \pi DR(\mathbf{x})  f(\mathbf{x}), \\ 
\end{align*}
by the identity of Lemma \ref{lem:theVectorFields}, and the fact that 
$DR$ is the identity matrix in its first four components.

For the remaining components we have, by direct calculation, that 
\begin{align*}
F_{5,6,7}(R(\mathbf{x})) &= - (u_1 - x_{1,2,3}) u_2 u_{5,6,7}^3 - (u_3 - y_{1,2,3}) u_4 u_{5,6,7}^3 \left|_{\mathbf{u} = R(\mathbf{x})}  \right.\\  
&= - (x - x_{1,2,3}) \dot x \left( r_{1,2,3}^{-1} \right)^3 - (y - y_{1,2,3}) \dot y    \left( r_{1,2,3}^{-1} \right)^3 \\ 
&=   \left< - \left(r_{1,2,3}^{-1} \right)^3 ( x - x_{1,2,3}, 0, y - y_{1,2,3}, 0), f(\mathbf{x}) \right>  \\
&= \left< \nabla r_{1,2,3}^{-1}, f(\mathbf{x}) \right>,
\end{align*}
and, recalling the formula for the derivative of $R$ in Equation \eqref{eq:DiffR}, this gives 
\[
\pi^\perp F(R(\mathbf{x})) = \pi^{\perp} DR(\mathbf{x}) f(\mathbf{x}), 
\]
as desired.
\end{proof}

\begin{remark}[A general remark on automatic differentiation] \label{rem:whatIsAutoDiff}
We note that, in a more general treatment of automatic differentiation, Equation \eqref{eq:autoDiffInfConj}
would be a crucial part of the high level explanation of the method.  Geometrically Equation \eqref{eq:autoDiffInfConj}
says that the push forward of $f$ by 
$DR$ is equal to $F$ on $\mathcal{S}$, which explains why the dynamics of $F$ on $\mathcal{S}$ recover the dynamics of $f$.
In other words, the ``infinitesimal conjugacy'' of Equation \eqref{eq:autoDiffInfConj} is exactly what makes
the automatic differentiation work.  

So in general, the strategy of automatic differentiation is this: given a non-polynomial vector field $f$ defined 
on an open set $U \subset \mathbb{R}^d$ one looks for an embedding $R \colon U \to \mathbb{R}^D$
and a polynomial vector field $F \colon \mathbb{R}^D \to \mathbb{R}^D$ 
having that $\pi R = \mbox{Id}$ and that $f, R, F$ satisfy Equation \eqref{eq:autoDiffInfConj}.
Note that if $F$ and $R$ have the properties given in the last sentence then one immediately recovers 
the critical identity given in Lemma \ref{lem:theVectorFields}.  
If $f$ is composed of finite sums, products, and compositions of elementary functions then 
the informal procedure illustrated in Section \ref{sec:poly} always leads to appropriate $F$, and $R$.
This claim is best illustrated by considering a number of examples. For more discussion 
see \cite{mamotreto}.
\end{remark}

\bigskip

The following lemma makes the preceding geometric remarks more precise.

\begin{lemma}[Solution curves of $f$ lift to solution curves of $F$] \label{lem:lift_of_a_solution} {\em
Let $\mathbf{x} \in U$ and suppose that $\gamma \colon (-T, T) \to \mathbb{C}^4$ 
is a solution of the initial value problem $\gamma' = f(\gamma)$ with 
$\gamma(0) = \mathbf{x}$.
Then the curve $\Gamma \colon (-T,T) \to \mathbb{C}^7$ with 
\[
\Gamma(t) := R(\gamma(t)), 
\]
solves the initial value problem for $\Gamma' = F(\Gamma)$ 
with $\Gamma(0) = R(\mathbf{x})$. }
\end{lemma}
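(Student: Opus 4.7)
The plan is to prove this by a one-line chain rule computation followed by a direct appeal to Lemma \ref{lem:autoDiffinfConj}. Since $\gamma$ solves $\gamma' = f(\gamma)$ with $\gamma(0) = \mathbf{x}$ and initial condition lying in $U$, and since $U$ is open, the solution remains in $U$ on the interval $(-T,T)$ (at least on the maximal interval of existence, which is what $(-T,T)$ refers to). Hence $R \circ \gamma$ is well defined, and $R$ is differentiable along the trajectory thanks to the explicit formula for $DR$ given in Equation \eqref{eq:DiffR}.

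The main computation is then to differentiate $\Gamma(t) = R(\gamma(t))$. By the chain rule,
\[
\Gamma'(t) = DR(\gamma(t)) \, \gamma'(t) = DR(\gamma(t)) \, f(\gamma(t)),
\]
where the second equality uses that $\gamma$ solves the CRFBP differential equation. Now apply the infinitesimal conjugacy relation of Lemma \ref{lem:autoDiffinfConj}, which states exactly that $DR(\mathbf{x}) f(\mathbf{x}) = F(R(\mathbf{x}))$ for every $\mathbf{x} \in U$. Plugging in $\mathbf{x} = \gamma(t)$ yields
\[
\Gamma'(t) = F(R(\gamma(t))) = F(\Gamma(t)),
\]
so $\Gamma$ satisfies the desired differential equation.

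For the initial condition, I simply evaluate $\Gamma(0) = R(\gamma(0)) = R(\mathbf{x})$, which is the claimed initial value. There is no real obstacle here: all the heavy lifting was done in setting up Lemma \ref{lem:autoDiffinfConj}, and the present lemma is essentially a packaging of that infinitesimal statement into a statement about solution curves. The only point worth being careful about is verifying that $\gamma(t) \in U$ for all $t \in (-T,T)$ so that $DR$ and the composition $F \circ R$ are defined along the trajectory, but this is immediate from the hypothesis that $\gamma$ is a solution of $\gamma' = f(\gamma)$, since $f$ itself is only defined on $U$.
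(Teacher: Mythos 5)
Your proof is correct and follows the same route as the paper: note that $\gamma$ stays in $U$ so the composition is defined, differentiate $\Gamma = R\circ\gamma$ by the chain rule, substitute the ODE for $\gamma$, and invoke the infinitesimal conjugacy $DR(\mathbf{x})f(\mathbf{x}) = F(R(\mathbf{x}))$ of Lemma \ref{lem:autoDiffinfConj}, then check the initial condition. No gaps; this matches the paper's argument essentially verbatim.
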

\begin{proof}
Assume that $\gamma \colon (-T, T) \to \mathbb{C}^4$ is a solution of the 
initial value problem $\gamma' = f(\gamma)$, with $\gamma(0) = \mathbf{x}$ where $\mathbf{x} \in U$.
It follows that $\gamma((-T, T)) \subset U$, so that $\gamma(t) \in \mbox{dom}(R)$
for all $t \in (-T,T)$.
Then the curve $\Gamma \colon (-T, T) \to \mathbb{C}^7$ given by the expression 
\[
\Gamma(t) = R(\gamma(t)),
\]
is well defined.  Note that 
\[
\Gamma(0) = R(\gamma(0)) =  R(\mathbf{x}).
\]
Taking the time derivative of $\Gamma$ gives 
\begin{align*}
\frac{d}{dt} \Gamma(t) &= \frac{d}{dt} R(\gamma(t)) \\
&= DR(\gamma(t)) \gamma'(t) \\
&= DR(\gamma(t)) f(\gamma(t)) \\
&=  F(R(\gamma(t))) \\
&= F(\Gamma(t)),
\end{align*}
where we used the infinitesimal conjugacy of Lemma \ref{lem:autoDiffinfConj} 
to pass from the third to the fourth line.
This calculation shows that $\Gamma(t)$ solves the initial value problem $\Gamma' = F(\Gamma)$
with $\Gamma(0) = R(\mathbf{x})$, and since $\mathbf{x} \in U$ was 
arbitrary we have the claim.
\end{proof}

The following Lemma gives us some information about \textit{any} solution curve of the vector 
field $F$.  Not just solutions on the image of $R$.

\begin{lemma} \label{lem:formOfAutoDiffSol}  {\em
Suppose that $\Gamma \colon (-T, T) \to \mathbb{C}^7$ is a
smooth solution of the differential equation $\Gamma' = F(\Gamma)$.
Assume that $\Gamma_1(t) \neq x_{1,2,3}$ and $\Gamma_3(t) \neq y_{1,2,3}$
for all $t \in (-T, T)$.
Then there are $C_1, C_2, C_3 \in \mathbb{C}$ so that 
\[
\Gamma_{5,6,7}(t)  = \frac{1}{\sqrt{(\Gamma_1(t) - x_{1,2,3})^2 + (\Gamma_3(t) - y_{1,2,3})^2 + C_{1,2,3}}}.
\]
}
\end{lemma}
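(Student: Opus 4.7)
The plan is to exhibit, for each primary body $j \in \{1,2,3\}$, an explicit scalar function of the seven components of $\Gamma$ which is a constant of motion for the polynomial flow generated by $F$. The guiding heuristic is that the extra variables $u_5, u_6, u_7$ were introduced in Section~\ref{sec:poly} precisely to represent $1/r_j$, and that the defining identity $u_{4+j}^{-2} = (u_1 - x_j)^2 + (u_3 - y_j)^2$ which is imposed by the embedding $R$ at the level of the vector field should descend to an invariant of the flow, valid up to an integration constant fixed by initial data.

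First I would define, for $j = 1,2,3$, the function
\[
\phi_j(t) := \frac{1}{\Gamma_{4+j}(t)^2} - \bigl(\Gamma_1(t) - x_j\bigr)^2 - \bigl(\Gamma_3(t) - y_j\bigr)^2,
\]
which is well defined as long as $\Gamma_{4+j}(t) \neq 0$. Using the ODE $\Gamma' = F(\Gamma)$, together with the explicit formulas $\Gamma_1' = \Gamma_2$, $\Gamma_3' = \Gamma_4$ from the first and third components of \eqref{eq:bigPoly}, and the formula $\Gamma_{4+j}' = -(\Gamma_1 - x_j)\Gamma_2\, \Gamma_{4+j}^3 - (\Gamma_3 - y_j)\Gamma_4\, \Gamma_{4+j}^3$ from the fifth, sixth, and seventh components, a direct computation yields
\[
\frac{d}{dt}\,\Gamma_{4+j}^{-2} = -2\,\Gamma_{4+j}^{-3}\,\Gamma_{4+j}' = 2(\Gamma_1 - x_j)\Gamma_2 + 2(\Gamma_3 - y_j)\Gamma_4,
\]
which is exactly $\tfrac{d}{dt}[(\Gamma_1-x_j)^2 + (\Gamma_3 - y_j)^2]$. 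Hence $\phi_j'(t) \equiv 0$ on $(-T,T)$, and $\phi_j \equiv C_j$ for some constant $C_j \in \mathbb{C}$. Solving for $\Gamma_{4+j}$ then gives
\[
\Gamma_{4+j}(t)^2 = \frac{1}{(\Gamma_1(t) - x_j)^2 + (\Gamma_3(t) - y_j)^2 + C_j},
\]
and taking a square root yields the claimed formula, up to a sign which can be absorbed into the choice of branch.

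The main issues to address are the degenerate case $\Gamma_{4+j} \equiv 0$ and the branch choice for the square root. For the former, I would observe that the right-hand side of the ODE for $\Gamma_{4+j}$ factors through $\Gamma_{4+j}^3$, so by uniqueness of solutions either $\Gamma_{4+j}$ vanishes identically (in which case the formula of the lemma is vacuous and should be interpreted as excluded) or $\Gamma_{4+j}$ never vanishes on $(-T,T)$; in the latter case $\phi_j(t)$ is well defined and the computation above applies. For the branch issue, the hypothesis that $\Gamma_1(t) \neq x_j$ and $\Gamma_3(t) \neq y_j$ rules out the radicand degenerating to $C_j$ in a manner forcing $\Gamma_{4+j}$ to pass through zero or infinity; since $\Gamma$ is smooth, the values $\Gamma_{4+j}(t)$ trace out a continuous curve in $\mathbb{C}\setminus\{0\}$, and a smooth branch of the square root can be chosen along this curve with sign fixed by the initial value $\Gamma_{4+j}(0)$. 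I expect the whole argument to be essentially a conservation-law calculation; no real analytic obstacle arises beyond bookkeeping for the branch.
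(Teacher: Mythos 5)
Your argument is correct, and at its core it is the same computation as the paper's: the paper observes that $\Gamma_{5,6,7}$ satisfy the scalar non-autonomous equation \eqref{eq:gODEform}, exhibits the one-parameter family $g(t)=\bigl((x(t)-a)^2+(y(t)-b)^2+C\bigr)^{-1/2}$ by direct substitution, and concludes by uniqueness of the initial value problem, whereas you integrate the same separable structure directly by checking that $\phi_j=\Gamma_{4+j}^{-2}-(\Gamma_1-x_j)^2-(\Gamma_3-y_j)^2$ is a first integral. The difference is packaging: your route needs $\Gamma_{4+j}$ nonvanishing before $\phi_j$ is even defined, which you repair with the correct dichotomy (the right-hand side of the $\Gamma_{4+j}$ equation carries the factor $\Gamma_{4+j}^3$, so uniqueness for the scalar ODE forces $\Gamma_{4+j}\equiv 0$ or $\Gamma_{4+j}$ never zero), while the paper's route avoids dividing by $\Gamma_{4+j}$ but still implicitly needs $\Gamma_{4+j}(0)\neq 0$ to choose $C$ matching the initial value, since the identically-zero solution is not in the stated family; so the degenerate case you flag sits outside the lemma's formula in both proofs, and is harmless in the paper only because the lemma is applied to solutions with data on $\mathcal{S}=\mbox{image}(R)$, where $\Gamma_{5,6,7}(0)=1/r_{1,2,3}\neq 0$. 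One small quibble: the hypothesis $\Gamma_1\neq x_j$, $\Gamma_3\neq y_j$ does not by itself ``rule out the radicand degenerating''; what keeps the radicand away from zero is the established identity $\Gamma_{4+j}^{-2}=(\Gamma_1-x_j)^2+(\Gamma_3-y_j)^2+C_j$ together with the nonvanishing of $\Gamma_{4+j}$, after which continuity of the nonvanishing function $\Gamma_{4+j}$ fixes the branch of the square root, exactly as you conclude.
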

\begin{proof}
Let $a, b \in \mathbb{R}$ and 
suppose that $x, y \colon (-T, T) \to \mathbb{C}$ are smooth functions.  
Consider the differential equation 
\begin{equation} \label{eq:gODEform}
g'(t) = \left( \frac{-(x(t) - a)^2 - (y(t)- b)^2}{2} \right)' g(t)^3.
\end{equation}
If $x(t) \neq a$ and $y(t) \neq b$ for all $t \in (-T, T)$ then the equation has  the one parameter 
family of solutions 
\[
g(t) = \frac{1}{\sqrt{(x(t) - a)^2 + (y(t) - b)^2 + C}},
\]
as can be checked by direct substitution.  Now, if $\Gamma(t)$ is a 
solution curve for the vector field $F$ then we have that 
\[
\frac{d}{dt} \Gamma_{5,6,7}(t)  
\]
\begin{align*}
  &= -\Gamma_1(t) \Gamma_2(t) \Gamma^3_{5,6,7}(t) + x_{1,2,3} \Gamma_2(t) \Gamma^3_{5,6,7}(t)
   -\Gamma_3(t) \Gamma_4(t) \Gamma^3_{5,6,7}(t) + y_{1,2,3} \Gamma_4(t) \Gamma^3_{5,6,7}(t) \\
   &= -\left(\Gamma_1(t) - x_{1,2,3}\right) \Gamma_2(t) \Gamma^3_{5,6,7}(t)
   - \left(\Gamma_3(t) - y_{1,2,3}\right) \Gamma_4(t) \Gamma^3_{5,6,7}(t) \\
   &= \left[ -\left(\Gamma_1(t) - x_{1,2,3}\right) \Gamma_1'(t) 
   - \left(\Gamma_3(t) - y_{1,2,3}\right) \Gamma_3'(t) \right] \Gamma^3_{5,6,7}(t) \\
   &=  \left[  \frac{-\left(\Gamma_1(t) - x_{1,2,3}\right)^2  
   - \left(\Gamma_3(t) - y_{1,2,3}\right)^2}{2} \right]' \Gamma^3_{5,6,7}(t).
\end{align*}
In other words, $\Gamma_{5,6,7}$ satisfy Equation \eqref{eq:gODEform} with $a = x_{1,2,3}$, $b = y_{1,2,3}$,
$x(t) = \Gamma_1(t)$, and $y(t) = \Gamma_3(t)$.  The result follows by uniqueness.
\end{proof}

It follows from the previous Lemma that, as long as there are no collisions, $\mathcal{S}$ is an invariant set.

\begin{cor}[$F$-invariance of $\mathcal{S}$] \label{lem:IVPpoly} {\em
Let $T > 0$ and $\Gamma \colon (-T, T) \to \mathbb{C}^7$ be a smooth 
solution of the differential equation $\Gamma' = F(\Gamma)$.
Assume that  
$\Gamma_1(t) \neq x_{j}$ and $\Gamma_3(t) \neq y_{j}$ for $j = 1,2,3$ and all $t \in (-T,T)$,
and that 
\[
\Gamma(0) \in \mathcal{S}.
\]
Then 
\[
\Gamma(t) \in \mathcal{S}, 
\]
for all $t \in (-T, T)$.}
\end{cor}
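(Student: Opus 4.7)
The plan is to leverage Lemma \ref{lem:formOfAutoDiffSol} and pin down the three integration constants by evaluating at the initial time. Recall that $\mathcal{S} = R(U)$ and that by Equation \eqref{eq:u_is_Rpi_u}, membership in $\mathcal{S}$ is characterized by the identity $\mathbf{u} = R(\pi\mathbf{u})$, which, written out in coordinates, is the assertion that components $1$ through $4$ of $\mathbf{u}$ are arbitrary while components $5,6,7$ are the reciprocal Euclidean distances to the three primaries computed from components $1$ and $3$.

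First I would apply Lemma \ref{lem:formOfAutoDiffSol} directly to $\Gamma$. The hypothesis that $\Gamma_1(t) \neq x_j$ and $\Gamma_3(t) \neq y_j$ on $(-T,T)$ for $j = 1, 2, 3$ is exactly what is needed to invoke that lemma, which produces constants $C_1, C_2, C_3 \in \mathbb{C}$ such that
\[
\Gamma_{5,6,7}(t) = \frac{1}{\sqrt{(\Gamma_1(t) - x_{1,2,3})^2 + (\Gamma_3(t) - y_{1,2,3})^2 + C_{1,2,3}}},
\]
for all $t \in (-T,T)$ (with the branch of the square root determined by continuity together with the initial value).

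Next I would evaluate at $t = 0$ and use the hypothesis $\Gamma(0) \in \mathcal{S}$. By the characterization of $\mathcal{S}$ above,
\[
\Gamma_{5,6,7}(0) = \frac{1}{\sqrt{(\Gamma_1(0) - x_{1,2,3})^2 + (\Gamma_3(0) - y_{1,2,3})^2}}.
\]
Setting this equal to the expression from Lemma \ref{lem:formOfAutoDiffSol} at $t = 0$, squaring, and inverting forces $C_{j} = 0$ for $j = 1, 2, 3$. With the constants now known to vanish, the formula from Lemma \ref{lem:formOfAutoDiffSol} reduces for every $t \in (-T,T)$ to precisely the expression for the components $R_5, R_6, R_7$ evaluated at $\pi \Gamma(t)$.

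Finally I would assemble the conclusion: since $R$ acts as the identity on the first four components and the last three components of $\Gamma(t)$ now agree with those of $R(\pi \Gamma(t))$, we have $\Gamma(t) = R(\pi \Gamma(t))$ for all $t \in (-T, T)$, whence $\Gamma(t) \in \mathrm{image}(R) = \mathcal{S}$ as desired. There is no real obstacle here beyond Lemma \ref{lem:formOfAutoDiffSol} itself; the mild bookkeeping issue is making sure the same branch of the complex square root is selected at $t=0$ on both sides of the matching argument, which is automatic since $\Gamma_{5,6,7}(0)$ is a single prescribed complex number and both expressions must reproduce it.
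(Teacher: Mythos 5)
Your proposal is correct and follows essentially the same route as the paper's proof: invoke Lemma \ref{lem:formOfAutoDiffSol} to get the one-parameter form of $\Gamma_{5,6,7}$, use $\Gamma(0) \in \mathcal{S}$ to force $C_{1,2,3} = 0$, and conclude $\Gamma(t) \in \mathrm{image}(R) = \mathcal{S}$. The extra remark about the branch of the square root is a reasonable (if optional) refinement of the same argument.
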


\begin{proof}
By Lemma \ref{lem:formOfAutoDiffSol} we have that 
\begin{equation} \label{eq:someProofEq1}
\Gamma_{5,6,7}(t)  = \frac{1}{\sqrt{(\Gamma_1(t) - x_{1,2,3})^2 + (\Gamma_3(t) - y_{1,2,3})^2 + C_{1,2,3}}}.
\end{equation}
for some $C_{1,2,3} \in \mathbb{C}$. But $\Gamma(0) \in \mathcal{S}$, so that $\Gamma(0) \in \mbox{image}(R)$, 
or  
\[
\Gamma_{5,6,7}(0) = \frac{1}{\sqrt{(\Gamma_1(0) - x_{1,2,3})^2 + (\Gamma_3(0) - y_{1,2,3})^2 }}.
\]
Then
\[
C_{1,2,3} = 0.
\]
Substituting these values back into Equation \eqref{eq:someProofEq1} now shows that 
$\Gamma(t) \in \mbox{image}(R) = \mathcal{S}$ for all $t \in (-T,T)$. 
\end{proof}

Now we have the following partial converse of Lemma \ref{lem:lift_of_a_solution}, which says
that -- when restricted to the invariant set $\mathcal{S}$ -- 
the $\pi$ projection of an orbit of $F$ 
recovers and orbit of the CRFBP.

\begin{lemma}[Projections of solution curves of $F$ on $\mathcal{S}$ are solution curves for $f$] \label{prop:autoDiff_IVP} {\em
Suppose that $\Gamma \colon (-T, T) \to \mathbb{C}^7$
is a solution of the initial value problem $\Gamma' = F(\Gamma)$ having 
$\Gamma(0) = \mathbf{u} \in \mathcal{S}$.  Suppose in addition that   
$\Gamma_1(t) \neq x_{j}$ and $\Gamma_3(t) \neq y_j$ for $j = 1,2,3$ and 
 for all $t \in (-T, T)$.  Then 
\[
\gamma(t) = \pi \Gamma(t), 
\]
is the solution of the CRFBP with initial conditions $\gamma(0) = \pi \mathbf{u}$.  }
\end{lemma}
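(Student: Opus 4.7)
The plan is to combine the $F$-invariance of $\mathcal{S}$ (Corollary \ref{lem:IVPpoly}) with the identity $\pi F \circ R = f$ (Lemma \ref{lem:theVectorFields}) to read off the desired differential equation for $\gamma = \pi \Gamma$.

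First I would invoke Corollary \ref{lem:IVPpoly} directly. The hypotheses of the corollary are exactly what we have assumed: $\Gamma$ solves $\Gamma' = F(\Gamma)$ on $(-T,T)$, it does not encounter a collision on this interval (this is the role of the assumptions $\Gamma_1(t) \neq x_j$, $\Gamma_3(t) \neq y_j$), and $\Gamma(0) = \mathbf{u} \in \mathcal{S}$. Thus $\Gamma(t) \in \mathcal{S}$ for all $t \in (-T,T)$, so by Equation \eqref{eq:u_is_Rpi_u} we have the key identity $\Gamma(t) = R(\pi \Gamma(t)) = R(\gamma(t))$ valid pointwise in $t$. In particular, $\gamma(t) \in U$ for all $t \in (-T, T)$, so $f(\gamma(t))$ is defined, and the initial condition $\gamma(0) = \pi \mathbf{u}$ is immediate from $\gamma = \pi \Gamma$ and $\Gamma(0) = \mathbf{u}$.

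Next I would differentiate. Since $\pi$ is a linear projection and $\Gamma$ is smooth,
\begin{align*}
\gamma'(t) &= \pi \Gamma'(t) \\
&= \pi F(\Gamma(t)) \\
&= \pi F(R(\gamma(t))) \\
&= f(\gamma(t)),
\end{align*}
where the second equality uses that $\Gamma$ solves $\Gamma' = F(\Gamma)$, the third uses the identity $\Gamma(t) = R(\gamma(t))$ established in the previous step, and the fourth applies Lemma \ref{lem:theVectorFields}. This shows $\gamma$ satisfies the initial value problem $\gamma' = f(\gamma)$, $\gamma(0) = \pi \mathbf{u}$ for the CRFBP.

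There is no real obstacle here; all the analytic work is already packaged into the preceding lemmas. The only subtlety worth verifying is that invoking Lemma \ref{lem:theVectorFields} requires $\gamma(t) \in U$, but this is guaranteed since the collision-avoidance hypothesis on $\Gamma_1, \Gamma_3$ is precisely the condition that $\pi \Gamma(t) = \gamma(t)$ lies in the domain $U$ defined in Equation \eqref{eq:defDomain_U}.
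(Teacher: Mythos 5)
Your proposal is correct and follows essentially the same route as the paper: both invoke the $F$-invariance of $\mathcal{S}$ (Corollary \ref{lem:IVPpoly}) to get $\Gamma(t) = R(\pi\Gamma(t))$ via Equation \eqref{eq:u_is_Rpi_u}, then use Lemma \ref{lem:theVectorFields} together with the linearity of $\pi$ to compute $\gamma' = \pi F(\Gamma) = f(\gamma)$. No gaps; the observation that the collision-avoidance hypothesis keeps $\gamma(t)$ in $U$ is exactly how the paper's proof begins.
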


\begin{proof}
Since there are no collisions on $(-T, T)$ we have that $\gamma(t) = \pi \Gamma(t) \in U$ for 
all $t \in (-T, T)$.  
Moreover, since $\Gamma(0) = \mathbf{u} \in \mathcal{S}$ (and there are no collisions) 
we have that $\Gamma(t) \in \mathcal{S}$ for all $t \in (-T, T)$
by Lemma \ref{lem:IVPpoly}.  
Now, recalling that Equation \eqref{eq:u_is_Rpi_u} holds on 
$\mathcal{S}$, we have that for all $t \in (-T, T)$ 
\[
\Gamma(t) = R(\pi \Gamma(t)).
\]
Exploiting the identity above in conjunction with Lemma \ref{lem:theVectorFields} 
gives that
\begin{equation} \label{eq:IVP_eq1}
f(\gamma(t)) = \pi F(R(\gamma(t))) = \pi F(R(\pi \Gamma(t))) = \pi F(\Gamma(t)).
\end{equation}

Now, since the derivative commutes with the projection operator, and since 
$\Gamma$ is a trajectory for $F$ by hypothesis, we have that   
\begin{align}
\frac{d}{dt} \gamma(t) &= \frac{d}{dt} \pi \Gamma(t)  \nonumber \\
&= \pi \frac{d}{dt} \Gamma(t) \nonumber \\
&= \pi F(\Gamma(t)).    \label{eq:IVP_eq2}
\end{align}
Combining Equation \eqref{eq:IVP_eq1} with Equation \eqref{eq:IVP_eq2} gives that 
\[
\frac{d}{dt} \gamma(t) = \pi F(\Gamma(t)) =  f(\gamma(t)).
\]
Combining the equation above with the fact that
\[
\gamma(0) = \pi \Gamma(0) = \pi \mathbf{u},
\]
we have that $\gamma$ solves the desired initial value problem.
\end{proof}

The preceding results combine to give the conjugacy result for our automatic differentiation scheme.   
Let $\Phi$ denote the flow generated by $F \colon \mathbb{C}^7 \to \mathbb{C}^7$ and $\Psi$
be the flow generated by $f \colon U \to \mathbb{C}^4$.  Note that there are some initial 
conditions for which the flows are not defined for all times, but only when there 
are collisions.

\begin{prop}[Flow conjugacy for the automatic differentiation] \label{prop:autoDiffConj}  {\em
Let $\mathbf{x} \in U$.  Then   
\begin{equation} \label{eq:autoDiffConj}
\Phi(R(\mathbf{x}), t) = R \left(\Psi(\mathbf{x}, t) \right),
\end{equation}
for all $t$ such that there is no collision.}
\end{prop}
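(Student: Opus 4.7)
The plan is to obtain the conjugacy as an essentially immediate corollary of Lemma \ref{lem:lift_of_a_solution} combined with uniqueness of solutions to the polynomial initial value problem. Fix $\mathbf{x} \in U$ and an interval $(-T,T)$ on which the trajectory $\gamma(t) := \Psi(\mathbf{x},t)$ avoids the primaries, so that $\gamma(t) \in U$ throughout. Then $\gamma$ satisfies $\gamma' = f(\gamma)$ with $\gamma(0) = \mathbf{x}$ by definition of $\Psi$.

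Next, I would apply Lemma \ref{lem:lift_of_a_solution} to the curve $\gamma$. This yields that the lifted curve $\Gamma(t) := R(\gamma(t)) = R(\Psi(\mathbf{x},t))$ is well-defined on $(-T,T)$ and solves the initial value problem $\Gamma' = F(\Gamma)$ with $\Gamma(0) = R(\mathbf{x})$. Since $F$ is a polynomial vector field on $\mathbb{C}^7$, it is in particular locally Lipschitz, so the Picard--Lindelöf theorem gives uniqueness of solutions. But $\Phi(R(\mathbf{x}),\cdot)$ is by definition the unique solution of this same initial value problem, so
\[
\Phi(R(\mathbf{x}),t) = \Gamma(t) = R(\Psi(\mathbf{x},t))
\]
for all $t \in (-T,T)$, which is the desired identity.

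Finally, I would note that the hypothesis "no collision" is exactly the condition under which $\gamma(t) \in U = \mathrm{dom}(R)$, so $R(\Psi(\mathbf{x},t))$ makes sense on precisely the stated time interval. There is no genuine obstacle here: the real content of the conjugacy is already packaged into the infinitesimal identity $DR\cdot f = F\circ R$ of Lemma \ref{lem:autoDiffinfConj}, which was used to prove Lemma \ref{lem:lift_of_a_solution}. The present statement simply promotes that infinitesimal conjugacy to a flow conjugacy via uniqueness of ODE solutions, and no new estimates or constructions are required.
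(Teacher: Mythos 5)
Your argument is correct: lifting the CRFBP trajectory with Lemma \ref{lem:lift_of_a_solution} and then invoking uniqueness for the (locally Lipschitz, polynomial) field $F$ does give $\Phi(R(\mathbf{x}),t)=R(\Psi(\mathbf{x},t))$ on any collision-free interval of the $f$-orbit, and this is in fact the content of the second paragraph of the paper's proof, where the appeal to Picard--Lindel\"of is left implicit. The paper's main argument, however, runs in the opposite direction first: it starts from the $F$-flow at $R(\mathbf{x})$, uses the $F$-invariance of $\mathcal{S}=R(U)$ (Corollary \ref{lem:IVPpoly}) and the projection result (Lemma \ref{prop:autoDiff_IVP}) to get $\Psi(\mathbf{x},t)=\pi\Phi(R(\mathbf{x}),t)$, and then applies the identity $R\circ\pi=\mathrm{Id}$ on $\mathcal{S}$. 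The point of doing both directions is the closing remark of the paper's proof: the conjugacy holds as long as \emph{either} side is defined, i.e.\ one may also start from a collision-free $F$-orbit and conclude existence of, and agreement with, the corresponding CRFBP orbit. Your single-direction proof is cleaner and fully suffices for the statement as literally phrased (where ``no collision'' naturally refers to the CRFBP trajectory), but it does not by itself deliver that slightly stronger two-sided version, which is what one uses when reading off CRFBP dynamics from computations performed entirely in the polynomial variables.
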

\begin{proof}
Let $\mathbf{x} \in U$.  Then $\mathbf{u} := R(\mathbf{x}) \in \mathcal{S}$ by definition.
By Lemma \ref{lem:IVPpoly}, we have that $\Phi(\mathbf{u}, t) \in \mathcal{S}$ for all $t$ such that 
there is no collision, and by Lemma \ref{prop:autoDiff_IVP} it follows that 
$\pi \Phi(\mathbf{u}, t)$ is the solution of the IVP for $f$ associated 
with the initial condition $\mathbf{x}$.  
This says that 
\begin{equation} \label{eq:anEquationInAProof}
\Psi(\mathbf{x}, t) = \pi \Phi(R(\mathbf{x}), t).
\end{equation}
Now since $\mathcal{S}$ is invariant it follows from the identity 
of Equation \eqref{eq:u_is_Rpi_u} that 
\[
R(\pi \Phi(R(\mathbf{x}), t)) = \Phi(R(\mathbf{x}), t).
\]
Combining the expression above with Equation \eqref{eq:anEquationInAProof}
gives
\[
\Phi(R(\mathbf{x}), t) = R(\pi \Phi(R(\mathbf{x}), t)) = R(\Psi(\mathbf{x}, t)),
\]
as desired.

Suppose on the other hand we begin with the same $\mathbf{x} \in U$ and consider the orbit 
$\Psi(\mathbf{x}, t)$, which exists for some open interval of time.  Then by Lemma \ref{lem:lift_of_a_solution} we 
have that $R(\Psi(\mathbf{x}, t))$ is a solution of the IVP for $F$ associated with the initial 
condition $R(\mathbf{x})$.
Combining these observations leads back to the identity
\[
R(\Psi(\mathbf{x}, t)) = \Phi(R(\mathbf{x}), t).
\]  
So: the identity holds as long as one side or the other is defined.
\end{proof}

\begin{cor} \label{cor:equilibriaRelation} {\em
Suppose that $\mathbf{x}_0 \in U$ is an equilibrium point of $f$.  Then $\mathbf{u}_0 = R(\mathbf{x}_0)$
is an equilibrium point for $F$.}
\end{cor}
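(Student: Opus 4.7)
The plan is to deduce this immediately from the infinitesimal conjugacy established in Lemma \ref{lem:autoDiffinfConj}, which is the most direct route. Recall that for every $\mathbf{x} \in U$ one has
\[
DR(\mathbf{x}) f(\mathbf{x}) = F(R(\mathbf{x})).
\]
Applying this identity at the point $\mathbf{x}_0$ and using the hypothesis $f(\mathbf{x}_0) = 0$, the left-hand side collapses to the zero vector, so $F(R(\mathbf{x}_0)) = F(\mathbf{u}_0) = 0$. This is all that is needed, since an equilibrium of $F$ is precisely a zero of $F$.

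As a sanity check I would also note that the same conclusion follows from the flow conjugacy of Proposition \ref{prop:autoDiffConj}: since $\Psi(\mathbf{x}_0, t) = \mathbf{x}_0$ for all $t$ in the domain of definition, Equation \eqref{eq:autoDiffConj} yields $\Phi(R(\mathbf{x}_0), t) = R(\mathbf{x}_0)$ for all such $t$, so the orbit of $\mathbf{u}_0$ under $F$ is constant. Either argument suffices.

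There is essentially no obstacle here: the only substantive content is Lemma \ref{lem:autoDiffinfConj}, which has already been verified by direct computation, together with the trivial observation that the projection $\pi$ plays no role in recovering the equilibrium property. The corollary is really a corollary in the strict sense, a one-line consequence of the infinitesimal conjugacy encoding the automatic differentiation scheme.
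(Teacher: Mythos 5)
Your argument is correct, and your primary route differs from the paper's. The paper proves the corollary at the level of flows: it uses the characterization of an equilibrium as a point fixed by the flow for all time, applies the flow conjugacy $\Phi(R(\mathbf{x}_0),t) = R(\Psi(\mathbf{x}_0,t))$ of Proposition \ref{prop:autoDiffConj}, and concludes $\Phi(\mathbf{u}_0,t)=\mathbf{u}_0$ for all $t$ --- which is exactly the argument you relegate to a sanity check. Your main argument instead works at the level of vector fields, applying the infinitesimal conjugacy $DR(\mathbf{x})f(\mathbf{x}) = F(R(\mathbf{x}))$ of Lemma \ref{lem:autoDiffinfConj} at $\mathbf{x}_0$ to get $F(\mathbf{u}_0) = DR(\mathbf{x}_0)\,f(\mathbf{x}_0) = 0$ directly. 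This is shorter and slightly cleaner: it never invokes the flow, so it sidesteps the ``no collision'' caveat attached to Proposition \ref{prop:autoDiffConj} and any appeal to the equivalence between zeros of a vector field and fixed points of its flow. What the paper's route buys is consistency with its organizing theme --- the flow conjugacy is the statement it wants to showcase and reuse --- but as a matter of logic your one-line deduction from the infinitesimal conjugacy suffices and is, if anything, the more economical proof.
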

\begin{proof}
Recall that a point is a zero or equilibrium point for a locally Lipschitz vector field  if an only if 
it is fixed by the flow for all time.  Then,  
since $\mathbf{x}_0$ is an equilibrium point of $f$ we have that 
\[
\Psi(\mathbf{x}_0, t) = \mathbf{x}_0, 
\]
for all $t \in \mathbb{R}$.  The conjugacy relation of Equation \eqref{eq:autoDiffConj},
now gives that
\[
\Phi(\mathbf{u}_0, t) = \Phi(R(\mathbf{x}_0), t) = R \left(\Psi(\mathbf{x}_0, t) \right) = R(\mathbf{x}_0) = \mathbf{u}_0,
\]
for all $t \in \mathbb{R}$, hence $\mathbf{u}_0$ is an equilibrium as claimed.  
\end{proof}

\subsection{Linear stability} \label{sec:linearStability}
Suppose that $\mathbf{x}_0$ is an equilibrium for $f$ and let 
$\mathbf{u}_0 = R(\mathbf{x}_0)$ denote the associated equilibrium for 
$F$.  
We begin by deriving a useful relationship between the differentials of $f$ and $F$ 
at an equilibrium point.  First differentiate Equation \eqref{eq:autoDiffInfConj} with respect to $\mathbf{x}$ to get
\[
DF(R(\mathbf{x})) DR(\mathbf{x}) = D^2 R(\mathbf{x}) f(\mathbf{x}) + DR(\mathbf{x}) Df(\mathbf{x}).
\]
Here $D^2 R(\mathbf{x})$ is a bilinear mapping, and the 
notation $D^2 R(\mathbf{x}) f(\mathbf{x})$ means that one of the 
two arguments has  $f(\mathbf{x})$ fixed.
The result  $D^2 R(\mathbf{x}) f(\mathbf{x})$ a linear 
mapping and hence a matrix like everything else in the expression.  
Now, using the fact that 
$\mathbf{x}_0 \in \mathbb{C}^4$ is an equilibrium for $f$ -- so that $\mathbf{u}_0 = R(\mathbf{x}_0)$
is an equilibrium for $F$ -- the identity above reduces to 
\begin{equation} \label{eq:helpful_linear_identity}
DF(R(\mathbf{x}_0)) DR(\mathbf{x}_0) = DR(\mathbf{x}_0) Df(\mathbf{x}_0).
\end{equation}

We now consider the eigenvalues and eigenvectors of $DF(\mathbf{u}_0)$ and $Df(\mathbf{x}_0)$.

\begin{lemma}[Zero eigenvalues of $DF(\mathbf{u}_0)$] \label{lem:zeroEigs} {\em
Zero is an eigenvalue of $DF(\mathbf{u}_0)$ with multiplicity at least three.  }
\end{lemma}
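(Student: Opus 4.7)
The strategy is to exploit the fact that at the equilibrium $\mathbf{x}_0 = (x_0, 0, y_0, 0) \in U$ the two velocity components vanish, together with the polynomial structure of $F$ displayed in Equation \eqref{eq:bigPoly}. The goal is to show that the $7 \times 7$ Jacobian $DF(\mathbf{u}_0)$ has rank at most $4$, so that its nullity is at least $3$, which yields the claim.

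First I would compute $DF(\mathbf{u}_0)$ entrywise. Every monomial appearing in the last three components $F_5, F_6, F_7$ carries a linear factor of $u_2$ or $u_4$. Since $u_2 = u_4 = 0$ at $\mathbf{u}_0 = R(\mathbf{x}_0)$, all partial derivatives $\partial_k F_{4+j}(\mathbf{u}_0)$ with $k \in \{1, 3, 5, 6, 7\}$ vanish: differentiating with respect to any of those five variables leaves the surviving $u_2$ or $u_4$ factor, which is zero at equilibrium. Hence rows $5, 6, 7$ of $DF(\mathbf{u}_0)$ are supported only in columns $2$ and $4$. Rows $1$ and $3$, coming from $F_1 = u_2$ and $F_3 = u_4$, obviously have their unique nonzero entry in columns $2$ and $4$ as well.

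Consequently rows $1, 3, 5, 6, 7$ all lie in the two-dimensional subspace of $(\mathbb{C}^7)^*$ spanned by $e_2^*$ and $e_4^*$. Together with rows $2$ and $4$, the row space of $DF(\mathbf{u}_0)$ has dimension at most $2 + 2 = 4$. Therefore $\operatorname{rank} DF(\mathbf{u}_0) \leq 4$, so $\dim \ker DF(\mathbf{u}_0) \geq 7 - 4 = 3$. Since geometric multiplicity is bounded above by algebraic multiplicity, $0$ is an eigenvalue of $DF(\mathbf{u}_0)$ of (algebraic) multiplicity at least $3$, as claimed.

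The calculation is essentially routine once one writes out the Jacobian; the only point that requires care is the verification that every monomial in $F_5, F_6, F_7$ genuinely contains a linear $u_2$ or $u_4$ factor, so that no partial derivative in a transverse direction can escape vanishing at $\mathbf{u}_0$. Direct inspection of Equation \eqref{eq:bigPoly} confirms this, so I do not expect any serious obstacle. As a sanity check, one may verify using the infinitesimal conjugacy \eqref{eq:helpful_linear_identity} that if $\mathbf{v} \in \mathbb{C}^4$ is an eigenvector of $Df(\mathbf{x}_0)$ with eigenvalue $\lambda$, then $DR(\mathbf{x}_0)\mathbf{v}$ is an eigenvector of $DF(\mathbf{u}_0)$ with the same eigenvalue. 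Combined with the four nonzero eigenvalues of the saddle-focus from Proposition \ref{prop:saddleFocus} and the fact that the characteristic polynomial has degree $7$, this pins the three remaining eigenvalues of $DF(\mathbf{u}_0)$ as zero.
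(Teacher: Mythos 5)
Your proof is correct and follows essentially the same route as the paper: both arguments observe that, since $u_2 = u_4 = 0$ at the equilibrium, rows $5,6,7$ of $DF(\mathbf{u}_0)$ have support only in columns $2$ and $4$ (the paper phrases this as those rows being linear combinations of rows $1$ and $3$), so the rank is at most $4$ and the kernel has dimension at least $3$.
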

\begin{proof}
Note that $\lambda = 0$ is an eigenvalue for $DF(\mathbf{u}_0)$ if and only if $DF(\mathbf{u}_0)$ is 
not invertible, and the multiplicity of the eigenvalue $\lambda = 0$ is the dimension of the kernel 
of $DF(\mathbf{u}_0)$.  
Now observe for example that 
\[
DF_5(\mathbf{u}) = 
\]
\[
\left[
- u_2 u_5^3, -u_1 u_5^3 + x_1 u_5^3,  - u_4 u_5^3, -u_3 u_5^3 + y_1 u_5^3, 3 u_5^2 \left(- u_1 u_2  +  x_1 u_2 -  u_3 u_4 + y_1 u_4 \right), 0, 0
\right],  
\]
in general, but that at the equilibrium this reduces to 
\[
DF_5(\mathbf{u}_0) = 
 \left[
 0, -u_1 u_5^3 + x_1 u_5^3,   0, -u_3 u_5^3 + y_1 u_5^3, 0, 0, 0
\right],  
\]
as $u_2 = u_4 = 0$ in equilibrium.  
Noting that 
\[
DF_1(\mathbf{u}) = \left[
0, 1, 0, 0, 0, 0, 0
\right]
\]
and that 
\[
DF_3(\mathbf{u}) = \left[
0, 0, 0, 1, 0, 0, 0
\right], 
\]
we see that 
\[
DF_5(\mathbf{u}_0) = -(u_1 - x_1)u_5^3 DF_1(\mathbf{u}_0) - (u_3 - y_1) u_5^3 DF_3(\mathbf{u}_0),
\]
where $u_1 - x_1, u_3 - y_1$, and $u_5^3$ are all non-zero.
That is, the fifth row of $DF(\mathbf{u}_0)$ is a non-zero linear combination of rows one and three.
Then performing Gaussian elimination on $DF(\mathbf{u}_0)$ will result in a row of zeros.

A nearly identical argument shows that the sixth and seventh rows of $DF(\mathbf{u}_0)$ 
are non-zero linear combinations of rows one and three.  
It follows that the reduced row echelon form of $DF(\mathbf{u}_0)$ has at least three zero rows.
This implies that the rank of $DF(\mathbf{u}_0)$ is at most four, hence the kernel is at least three 
as claimed.  
\end{proof}

\begin{remark}[Zero surfaces of $F$] 
Viewed in light of Lemma \ref{lem:formOfAutoDiffSol}, the zero eigenvalue count above makes perfect sense.
Again, solutions of the equations
\[
u_{5,6,7}' = F_{5,6,7}(\mathbf{u}),
\]
come in one parameter families.  Only by fixing all three initial conditions do we obtain a unique 
solution.  Restricting to the set $\mathcal{S} = \mbox{image}(R)$ has the effect of 
selecting the constants $C_{1,2,3} = 0$ in Lemma \ref{lem:formOfAutoDiffSol}.  Moving normal 
to the surface $\mathcal{S}$ yields other zeros of $F$ corresponding to other values 
of the constants.  These solutions however have nothing to do with the dynamics of the CRFBP.
\end{remark}

\begin{lemma}[Non-zero eigenvalues of $Df(\mathbf{x}_0)$] \label{lem:nonZeroEigs} {\em
Suppose that $\lambda \in \mathbb{C}$ is a non-zero eigenvalue for $Df(\mathbf{x}_0)$ 
and let $\mathbf{\xi} \in \mathbb{C}^4$ denote an associated eigenvector.  
Define 
\[
\textbf{v} = DR(\mathbf{x}_0) \mathbf{\xi}.
\] 
Then $\lambda$ is an eigenvalue for $DF(\mathbf{u}_0)$ and $\mathbf{v} \in \mathbb{C}^7$
is an associated eigenvector.  }
\end{lemma}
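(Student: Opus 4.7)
The plan is to exploit the linear conjugacy-like identity already established in Equation \eqref{eq:helpful_linear_identity}, namely
\[
DF(\mathbf{u}_0) \, DR(\mathbf{x}_0) = DR(\mathbf{x}_0) \, Df(\mathbf{x}_0),
\]
which is the infinitesimal content of $R$ intertwining the two vector fields at an equilibrium. The computation is then essentially a one-liner: apply both sides to the eigenvector $\mathbf{\xi}$ to obtain
\[
DF(\mathbf{u}_0) \mathbf{v} = DF(\mathbf{u}_0) DR(\mathbf{x}_0) \mathbf{\xi} = DR(\mathbf{x}_0) Df(\mathbf{x}_0) \mathbf{\xi} = \lambda \, DR(\mathbf{x}_0) \mathbf{\xi} = \lambda \mathbf{v},
\]
so $\mathbf{v}$ is a candidate eigenvector with eigenvalue $\lambda$.

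The only genuine content beyond algebraic manipulation is to rule out the degenerate possibility $\mathbf{v} = 0$. For this I would appeal to the explicit formula for $DR$ recorded in Equation \eqref{eq:DiffR}: the first four rows of $DR(\mathbf{x}_0)$ form the $4 \times 4$ identity, so $\pi \, DR(\mathbf{x}_0) \mathbf{\xi} = \mathbf{\xi}$. Since eigenvectors are by definition nonzero, $\mathbf{\xi} \neq 0$ forces $\pi \mathbf{v} = \mathbf{\xi} \neq 0$ and hence $\mathbf{v} \neq 0$ in $\mathbb{C}^7$. Combining this with the displayed equation above yields the claim that $\lambda$ is an eigenvalue of $DF(\mathbf{u}_0)$ and $\mathbf{v}$ an associated eigenvector.

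There is no real obstacle here; the lemma is a structural consequence of the fact that $R$ is a smooth embedding whose differential is injective and which linearizes the relationship between $f$ and $F$ at the equilibrium. The hypothesis that $\lambda \neq 0$ is not needed to derive the eigenvector identity, but is consistent with Lemma \ref{lem:zeroEigs}, which already accounts for (at least) three eigenvalues equal to $0$ arising from normal directions to the invariant surface $\mathcal{S}$; these extra zero eigenvalues correspond to eigenvectors transverse to $\mathrm{image}(DR(\mathbf{x}_0))$ and play no role in the CRFBP dynamics. In practice, this lemma is the tool we use to lift eigendata computed for the four-dimensional CRFBP Jacobian (via Lemma \ref{lem:eigVectLemma} and the explicit formula \eqref{eq:CRFBP_eigs}) to eigendata of $DF(\mathbf{u}_0)$, which is precisely what the parameterization method of Section \ref{sec:parameterization} requires as first-order data.
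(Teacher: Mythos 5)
Your proposal is correct and follows essentially the same route as the paper: apply the identity $DF(\mathbf{u}_0)DR(\mathbf{x}_0)=DR(\mathbf{x}_0)Df(\mathbf{x}_0)$ of Equation \eqref{eq:helpful_linear_identity} to $\mathbf{\xi}$ and read off $DF(\mathbf{u}_0)\mathbf{v}=\lambda\mathbf{v}$. Your extra observation that $\pi\,DR(\mathbf{x}_0)\mathbf{\xi}=\mathbf{\xi}\neq 0$, so $\mathbf{v}\neq 0$, is a small point the paper's proof leaves implicit, and is worth keeping.
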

\begin{proof}
We have 
\begin{align*}
DF(\mathbf{u}_0) \mathbf{v} &= DF(\mathbf{u}_0)  DR(\mathbf{x}_0) \mathbf{\xi} \\
&= DF(R(\mathbf{x}_0))  DR(\mathbf{x}_0) \mathbf{\xi} \\
&= DR(\mathbf{x}_0) Df(\mathbf{x}_0) \mathbf{\xi} \\
&= DR(\mathbf{x}_0) \left(\lambda \mathbf{\xi}\right) \\
& = \lambda  DR(\mathbf{x}_0)  \mathbf{\xi} \\
&= \lambda \mathbf{v},
\end{align*}
where we use that $\mathbf{x}_0$ is an equilibrium and the identity of Equation \eqref{eq:helpful_linear_identity}
to pass from linear two to line three in the calculation.
\end{proof}

\begin{lemma}[Eigenvectors associated with the zero eigenvalues and diagonalizibility] {\em
Let $\mathbf{u}_0 = (u_1, u_2, u_3, u_4, u_5, u_6, u_7) \in \mathbb{R}^7$ be an 
equilibrium solution of $F$ and assume that $DF(\mathbf{u}_0)$ has
four distinct, non-zero eigenvalues.  
If 
\[
1 - m_1 u_5^3 - m_2 u_6^3 - m_3 u_7^3 \neq 0.
\]
then $DF(\mathbf{u}_0)$ is diagonalizable.  
}
\end{lemma}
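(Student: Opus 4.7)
The plan is to count algebraic and geometric multiplicities. Since $DF(\mathbf{u}_0)$ is a $7\times 7$ matrix and has four distinct non-zero eigenvalues, and since Lemma \ref{lem:zeroEigs} guarantees that $\lambda = 0$ is an eigenvalue of geometric (hence algebraic) multiplicity at least $3$, we see that the algebraic multiplicities must be exactly $(1,1,1,1,3)$: four simple non-zero eigenvalues together with a zero eigenvalue of algebraic multiplicity exactly three. The four non-zero eigenvalues are automatically diagonalizable (geometric multiplicity equals $1$ for any simple eigenvalue), so the only thing left to verify is that the geometric multiplicity of $\lambda = 0$ equals $3$, i.e., that $\dim \ker DF(\mathbf{u}_0) = 3$, or equivalently $\operatorname{rank} DF(\mathbf{u}_0) = 4$.

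First I would write out $DF(\mathbf{u}_0)$ explicitly using equation \eqref{eq:bigPoly} and the fact that an equilibrium has $u_2 = u_4 = 0$. The rows split cleanly into three groups. Rows $1$ and $3$ are the coordinate vectors $e_2^T$ and $e_4^T$, respectively. Rows $5$, $6$, $7$ (as computed already in the proof of Lemma \ref{lem:zeroEigs}) are of the form
\[
\bigl[\,0,\; -(u_1-x_j)u_{4+j}^3,\; 0,\; -(u_3-y_j)u_{4+j}^3,\; 0,\; 0,\; 0\,\bigr],
\]
so each is a linear combination of $e_2^T$ and $e_4^T$. Rows $2$ and $4$ are the interesting rows: direct differentiation of $F_2$ and $F_4$ shows
\[
DF_2(\mathbf{u}_0) = \bigl[\,A,\; 0,\; 0,\; 2,\; b_1,\; b_2,\; b_3\,\bigr], \qquad
DF_4(\mathbf{u}_0) = \bigl[\,0,\; -2,\; A,\; 0,\; c_1,\; c_2,\; c_3\,\bigr],
\]
where $A := 1 - m_1 u_5^3 - m_2 u_6^3 - m_3 u_7^3$ and the $b_j, c_j$ are explicit quantities involving the $(u_1-x_j), (u_3-y_j)$ and $u_{4+j}^2$.

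Next I would use elementary row operations to deduce the rank. Subtracting appropriate multiples of rows $1$ and $3$ from rows $2$, $4$, $5$, $6$, $7$ zeros out columns $2$ and $4$ in those five rows, reducing rows $5$, $6$, $7$ to zero rows, while rows $2$ and $4$ become
\[
\bigl[\,A,\; 0,\; 0,\; 0,\; b_1,\; b_2,\; b_3\,\bigr]
\quad \text{and} \quad
\bigl[\,0,\; 0,\; A,\; 0,\; c_1,\; c_2,\; c_3\,\bigr].
\]
Together with $e_2^T$ and $e_4^T$ these give four rows. The hypothesis $A \neq 0$ makes the first of the two rows above independent of $e_2^T, e_4^T$ and of the other (its column-$1$ entry is $A$ while the other three rows have a zero in column $1$); the same holds for the second (via column $3$). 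Thus the four rows are linearly independent, so $\operatorname{rank} DF(\mathbf{u}_0) \geq 4$. Since Lemma \ref{lem:zeroEigs} already gave $\operatorname{rank} DF(\mathbf{u}_0) \leq 4$, we conclude $\operatorname{rank} DF(\mathbf{u}_0) = 4$, hence $\dim \ker DF(\mathbf{u}_0) = 3$. Combined with the multiplicity count from the first paragraph, every eigenvalue has geometric multiplicity equal to its algebraic multiplicity, so $DF(\mathbf{u}_0)$ is diagonalizable.

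The argument is essentially linear algebra once the rows of $DF(\mathbf{u}_0)$ are written down; there is no real obstacle. The only point requiring any care is the bookkeeping of which entries are zero (because $u_2 = u_4 = 0$ at the equilibrium) and recognizing that the quantity $A$ governing the independence of rows $2$ and $4$ from the span of $\{e_2^T, e_4^T\}$ is precisely the non-degeneracy quantity named in the hypothesis.
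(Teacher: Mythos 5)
Your proposal is correct and follows essentially the same route as the paper: both exploit $u_2 = u_4 = 0$ at the equilibrium to row-reduce $DF(\mathbf{u}_0)$, use the hypothesis $1 - m_1 u_5^3 - m_2 u_6^3 - m_3 u_7^3 \neq 0$ to conclude the rank is exactly $4$ (kernel dimension exactly $3$), and then combine the three-dimensional kernel with the four simple non-zero eigenvalues to get seven independent eigenvectors. The only cosmetic difference is that you argue rank $\geq 4$ by direct linear independence of four rows and frame the conclusion via algebraic/geometric multiplicity counting, whereas the paper carries the Gaussian elimination through to the reduced row echelon form and exhibits an explicit basis for the kernel.
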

\begin{proof}
The Jacobian of $F$ decomposes into 
\begin{equation}\label{eq:polyJacobi}
DF(\mathbf{u}) = 
\left(
\begin{array}{cc}
D_1(\mathbf{u}) & D_2(\mathbf{u}) \\
D_3(\mathbf{u}) &  D_4(\mathbf{u})
\end{array}
\right)
\end{equation}
where 
\[
D_1(u_1, u_2, u_3, u_4, u_5, u_6, u_7) := 
\]
\[
\left(
\begin{array}{ccccccc}
0 & 1 & 0& 0  \\
    1 - m_1 u_5^3  - m_2 u_6^3  - m_3 u_7^3 & 0& 0& 2 \\
   0& 0& 0&  1 \\
    0& -2 &    1 - m_1 u_5^3  - m_2 u_6^3  - m_3 u_7^3   &  0      
 \end{array}
\right),
\]
\[
D_2(u_1, u_2, u_3, u_4, u_5, u_6, u_7) := \left(
\begin{array}{ccccccc}
 0& 0& 0\\
  - 3 m_1( u_1 - x_1) u_5^2 &  - 3 m_2( u_1 -  x_2) u_6^2&  - 3 m_3(u_1 -  x_3) u_7^2\\
  0& 0& 0\\
 - 3 m_1( u_3 - y_1) u_5^2   &  - 3 m_2( u_3 -y_2) u_6^2  & - 3 m_3( u_3 -y_3) u_7^2    
\end{array}
\right),
\]
\[
D_3(u_1, u_2, u_3, u_4, u_5, u_6, u_7) := 
\left(
\begin{array}{ccccccc}
  - u_2 u_5^3   & - (u_1  - x_1)  u_5^3   &    - u_4 u_5^3    &   -  (u_3  - y_1) u_5^3  \\
    - u_2 u_6^3   & - (u_1  - x_2)  u_6^3   &    - u_4 u_6^3    &   -  (u_3  - y_2) u_6^3  \\
     - u_2 u_7^3   & - (u_1  - x_3)  u_7^3   &    - u_4 u_7^3    &   -  (u_3  - y_3) u_7^3  
\end{array}
\right)
\] 
and 
\[
D_4(u_1, u_2, u_3, u_4, u_5, u_6, u_7) := 
\]
{\tiny
\[
\left(
\begin{array}{ccccccc}
   -3 (u_1 u_2 - x_1 u_2  +  u_3 u_4 - y_1 u_4) u_5^2  & 0& 0\\
  0   &  -3 (u_1 u_2 - x_2 u_2  +  u_3 u_4 - y_2 u_4) u_6^2  & 0\\
  0   & 0 &  -3 (u_1 u_2 - x_3 u_2  +  u_3 u_4 - y_3 u_4) u_7^2  
\end{array}
\right)
\]
}
Moreover, if $\mathbf{u}_0$ is an equilibrium point for $F$ then 
\[
D_3(\mathbf{u}_0)
= \left(
\begin{array}{ccccccc}
  0  & - (u_1  - x_1)  u_5^3   &    0   &   -  (u_3  - y_1) u_5^3  \\
    0   & - (u_1  - x_2)  u_6^3   &    0   &   -  (u_3  - y_2) u_6^3  \\
    0   & - (u_1  - x_3)  u_7^3   &   0    &   -  (u_3  - y_3) u_7^3  
\end{array}
\right), 
\]
and 
\[
D_4(\mathbf{u}_0) = 0,
\]
due to the fact that $u_2 = u_4 = 0$.

The proof of Lemma \ref{lem:zeroEigs} shows that the the last three rows 
are completely eliminated after three steps of Gaussian elimination.
Then $DF(\mathbf{u}_0)$ is row equivalent to 
\[
B = \left(
\begin{array}{cc}
D_1 & D_2 \\
0 & 0 
\end{array}
\right).
\]
By completing the Gaussian elimination, we see that 
$DF(\textbf{u}_0)$ has RREF  
\[
R = \left(
\begin{array}{cc}
\mbox{Id} & R_2 \\
0 & 0 
\end{array}
\right),
\] 
where 
\[
R_2 = 
\frac{1}{a}
\left(
\begin{array}{cccc}
-3m_1 (u_1 - x_1) u_5^2 & -3m_1 (u_1 - x_2) u_6^2 & -3m_1 (u_1 - x_3) u_7^2 \\
0  &  0  &  0 \\
-3m_1 (u_3 - y_1) u_5^2 & -3m_1 (u_3 - y_2) u_6^2 & -3m_1 (u_3 - y_3) u_7^2 \\
0 & 0 & 0 
\end{array}
\right)
\]
and
\[
a := 1 - m_1 u_5^3 - m_2 u_6^3 - m_3 u_7^3.
\]
From the RREF we see that the kernel of $R$, and hence the kernel of $DF(\mathbf{u}_0)$, 
is spanned by the three linearly independent vectors 
\begin{equation} \label{eq:basisKernel}
v_5 = \left(
\begin{array}{c}
\frac{3 m_1 (u_1 - x_1) u_5^2}{a} \\
0 \\
\frac{3 m_1 (u_3 - y_1) u_5^2}{a} \\
0 \\
1 \\
0 \\
0
\end{array}
\right),  \quad \quad 
v_6 = \left(
\begin{array}{c}
\frac{3 m_2 (u_1 - x_2) u_6^2}{a} \\
0 \\
\frac{3 m_2 (u_3 - y_2) u_6^2}{a} \\
0 \\
0 \\
1 \\
0
\end{array}
\right), \mbox{ and } 
v_7 = \left(
\begin{array}{c}
\frac{3 m_3 (u_1 - x_3) u_7^2}{a} \\
0 \\
\frac{3 m_3 (u_3 - y_3) u_7^2}{a} \\
0 \\
0 \\
0 \\
1
\end{array}
\right).
\end{equation}
Since we assumed that the 
remaining eigenvalues are non-zero and distinct,  
$DF(\mathbf{u}_0)$ has seven linearly independent eigenvectors -- 
hence is diagonalizable.  
\end{proof}

\subsection{The Parameterized Manifolds} \label{sec:parmPolyToCRFBP}
Take $\mathbf{x}_0 \in U \cap \mathbb{R}^4$ a real equilibrium of $f$ 
and $\mathbf{u}_0 = R(\mathbf{x}_0)$ the corresponding real equilibrium of $F$.
Suppose that $Df(\mathbf{x}_0)$ and 
hence $DF(\mathbf{u}_0)$ have four non-zero eigenvalues which we denote 
by $\lambda_1, \lambda_2, \lambda_3 \lambda_4 \in \mathbb{C}$.   
By Lemma \ref{lem:zeroEigs} the remaining
three eigenvalues of $DF(\mathbf{u}_0)$ are zero.  
In this paper we are interested in the symplectic saddle-focus case, so that 
\[
\lambda_{1,2} = -\alpha \pm i \beta,
\quad \quad \quad \quad \lambda_{3,4} = \alpha \pm i \beta,  
\]
for some $\alpha, \beta > 0$.

Let $\mathbf{\xi}_1, \mathbf{\xi}_2, \mathbf{\xi}_3, \mathbf{\xi}_4 \in \mathbb{C}^4$
be the corresponding eigenvalues of $Df(\mathbf{x}_0)$, so that by Lemma \ref{lem:nonZeroEigs}
\[
\mathbf{v}_j := R(\mathbf{x}_0) \mathbf{\xi}_j, 
\quad \quad \quad \quad j = 1,2,3,4
\]
are eigenvectors of $DF(\mathbf{u}_0)$ associated with $\lambda_1, \lambda_2, \lambda_3 \lambda_4$ 
respectively. Note that $\pi \mathbf{v}_j = \mathbf{\xi}_j$ for $j = 1,2,3,4$.
We have the following.  

\begin{lemma} \label{lem:imagePinS}{\em
Suppose that $P \colon D \to \mathbb{C}^7$ solves the invariance equation \eqref{eq:invEqnFlows1}, subject to the 
constraints $P(0) = \mathbf{u}_0$, $\partial_1 P(0) = \mathbf{v}_1$, and $\partial_2 P(0) = \mathbf{v}_2$.  Assume that 
$P_1(z_1, z_2) \neq x_{1,2,3}$ and $P_3(z_1, z_2) \neq y_{1,2,3}$ for $(z_1, z_2) \in D$.
Then 
\[
P(D) \subset \mathcal{S}.
\]
}
\end{lemma}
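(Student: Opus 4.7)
My approach is to verify an algebraic identity that characterizes $\mathcal{S}$. For $j = 1, 2, 3$, define the auxiliary function
\[
\hat{H}_j(z_1, z_2) := 1 - P_{j+4}(z_1, z_2)^2 \bigl[ (P_1(z_1, z_2) - x_j)^2 + (P_3(z_1, z_2) - y_j)^2 \bigr].
\]
The plan is to show that $\hat{H}_j \equiv 0$ on $D$ for each $j$, from which the membership $P(z_1, z_2) \in \mathcal{S} = \mathrm{image}(R)$ follows by selecting the correct branch of the square root used to define $R$.

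First I would check the initial value: since $\mathbf{u}_0 = R(\mathbf{x}_0)$ forces $u_{j+4,0} = 1/r_j(\mathbf{x}_0)$, the first order constraints give $\hat{H}_j(0, 0) = 1 - u_{j+4,0}^2 \, r_j(\mathbf{x}_0)^2 = 0$. Next, let $\mathcal{D} := \lambda_1 z_1 \partial_{z_1} + \lambda_2 z_2 \partial_{z_2}$, and apply $\mathcal{D}$ to $\hat{H}_j$, substituting the invariance equation $\mathcal{D} P = F(P)$ componentwise (using explicitly $F_1(P) = P_2$, $F_3(P) = P_4$, and $F_{j+4}(P) = -P_{j+4}^3 [(P_1 - x_j) P_2 + (P_3 - y_j) P_4]$ from Equation \eqref{eq:bigPoly}). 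A direct calculation then collapses the result into the homogeneous linear PDE
\[
\mathcal{D} \hat{H}_j = K_j \, \hat{H}_j, \qquad K_j := -2 P_{j+4}^2 \bigl[ (P_1 - x_j) P_2 + (P_3 - y_j) P_4 \bigr].
\]
The pivotal observation is that $K_j(0, 0) = 0$, since the equilibrium conditions $u_2 = u_4 = 0$ kill both terms inside the bracket.

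With this PDE in hand, I would expand $\hat{H}_j$ and $K_j$ as power series about the origin and match coefficients. The recurrence takes the form $(m\lambda_1 + n\lambda_2) h_{mn} = {}$ an expression depending only on coefficients $h_{m',n'}$ with $m' + n' < m + n$, precisely because the would-be contribution $k_{00} h_{mn}$ vanishes. Since $\lambda_{1,2}$ have strictly negative real part, $m\lambda_1 + n\lambda_2 \neq 0$ for every $(m, n) \neq (0, 0)$. Combined with the base case $h_{00} = 0$, induction on $m + n$ forces every $h_{mn}$ to vanish, so $\hat{H}_j \equiv 0$ on $D$.

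Finally, I would upgrade the identity $P_{j+4}^2 [(P_1 - x_j)^2 + (P_3 - y_j)^2] = 1$ to the pointwise statement $P = R \circ \pi \circ P$. The identity forces both $P_{j+4}$ and $(P_1 - x_j)^2 + (P_3 - y_j)^2$ to be nowhere vanishing on $D$, so on the simply connected polydisc $D$ one can select an analytic branch $\rho_j$ of $\sqrt{(P_1 - x_j)^2 + (P_3 - y_j)^2}$ normalized by $\rho_j(0,0) = r_j(\mathbf{x}_0) > 0$. Then $(P_{j+4} \rho_j)^2 \equiv 1$ on connected $D$, and since $P_{j+4}(0,0) \rho_j(0,0) = 1$, continuity pins the $\{\pm 1\}$-valued function $P_{j+4} \rho_j$ to $\equiv +1$, i.e., $P_{j+4} = 1/\rho_j$ with the correct branch for each $j$. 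Matching this to the components $R_5, R_6, R_7$ of $R \circ \pi$ under the same branch convention gives $P(D) \subset \mathcal{S}$. The main (and minor) obstacle I anticipate is this branch-selection step: it is routine for analytic functions on a simply connected domain, but care is needed to make the branch choice in $R$ consistent with the one forced by analytic continuation of $P$.
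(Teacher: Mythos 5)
Your proof is correct, but it takes a genuinely different route from the paper's. The paper argues dynamically: for a fixed $(z_1,z_2)$ it uses the flow conjugacy of Lemma \ref{lem:flowConj} to view $\Gamma(t) = P(e^{\lambda_1 t}z_1, e^{\lambda_2 t}z_2)$ as an orbit of $F$ converging to $\mathbf{u}_0$, invokes Lemma \ref{lem:formOfAutoDiffSol} to write $\Gamma_{5,6,7}$ in the one-parameter family $1/\sqrt{(\Gamma_1-x_j)^2+(\Gamma_3-y_j)^2+C_j}$, and pins $C_j=0$ by letting $t\to\infty$ and using $\mathbf{u}_0\in\mathcal{S}$. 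You instead work directly with the defect $\hat H_j = 1 - P_{j+4}^2[(P_1-x_j)^2+(P_3-y_j)^2]$, verify (correctly -- I checked the computation against Equation \eqref{eq:bigPoly}) that the radial derivation $\mathcal{D}$ from \eqref{eq:invEqnFlows1} gives $\mathcal{D}\hat H_j = K_j\hat H_j$ with $K_j(0,0)=0$, and then kill all Taylor coefficients by the non-resonance $m\lambda_1+n\lambda_2\neq 0$ together with $\hat H_j(0,0)=0$; this is a self-contained first-integral/cohomological argument that bypasses Lemmas \ref{lem:flowConj} and \ref{lem:formOfAutoDiffSol} entirely, and your branch-selection step at the end is if anything more explicit than the paper's own handling of the square root in $R$ on complex domains. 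Two small caveats: your coefficient-matching step needs $P$ analytic, whereas the paper's argument only needs $P$ smooth (harmless here, since the parameterizations in this paper are analytic, and in the smooth case your identity could instead be integrated along the trajectories $t\mapsto(e^{\lambda_1 t}z_1,e^{\lambda_2 t}z_2)$, using $\hat H_j\to 0$ and $\int_0^\infty K_j\,dt$ finite as $t\to\infty$); and the vanishing at the origin is forced by the zeroth-order constraint $P(0)=\mathbf{u}_0$ rather than the first-order ones, a wording slip only.
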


\begin{proof}
Choose $(z_1, z_2) \in D$ and define the curve $\Gamma \colon [0, \infty) \to \mathbf{C}^7$ 
\[
\Gamma(t) := P(e^{\lambda_1 t} z_1, e^{\lambda_2 t} z_2).
\]
By Lemma \ref{lem:flowConj} we have that $\Gamma$ is a solution 
of the initial value problem $\Gamma' = F(\Gamma)$ with $\Gamma(0) = P(z_1, z_2)$, and that 
\[
\lim_{t \to \infty} \Gamma(t) = \mathbf{u}_0.
\]
Since $P$ and hence $\Gamma$ have no collisions
by hypothesis,  it follows from Lemma \ref{lem:formOfAutoDiffSol} that  
\begin{equation} \label{eq:someEquation2}
\Gamma_{5,6,7}(t)  = \frac{1}{\sqrt{(\Gamma_1(t) - x_{1,2,3})^2 + (\Gamma_3(t) - y_{1,2,3})^2 + C_{1,2,3}}},
\end{equation}
for some $C_{1,2,3} \in \mathbb{C}$.  
But 
\begin{align*}
\lim_{t \to \infty} \Gamma_{5,6,7}(t) &= \lim_{t \to \infty}  \frac{1}{\sqrt{(\Gamma_1(t) - x_{1,2,3})^2 + (\Gamma_3(t) - y_{1,2,3})^2 + C_{1,2,3}}} \\
&=  \frac{1}{\sqrt{(  \lim_{t \to \infty} \Gamma_1(t) - x_{1,2,3})^2 + (  \lim_{t \to \infty} \Gamma_3(t) - y_{1,2,3})^2 + C_{1,2,3}}} \\
&=  \frac{1}{\sqrt{( \mathbf{u}_{1} - x_{1,2,3})^2 + (  \mathbf{u}_3 - y_{1,2,3})^2 + C_{1,2,3}}},
\end{align*}
and since $\Gamma(t) \to \mathbf{u}_0 \in \mathcal{S}$ as $t \to \infty$ it must be the case that $C_{1,2,3} = 0$.  
Plugging this back into Equation \eqref{eq:someEquation2} we see that 
\[
\Gamma(t) \in \mathcal{S}, 
\]
for all $t \geq 0$.  In particular, the inclusion holds at $t = 0$ and we have that 
\[
P(z_1, z_2) = \Gamma(0) \in \mathcal{S}.
\]
But $(z_1, z_2) \in D$ was arbitrary, giving the result.  
\end{proof}

The next lemma gives that the parameterization method applied to $F$ recovers the parameterization 
for $f$, and hence the local stable/unstable manifolds for the CRFBP.  

\begin{lemma}\label{lem:polyGivesCRFBP}
{\em
Suppose that $P \colon D \to \mathbb{C}^7$ solves the invariance equation \eqref{eq:invEqnFlows1}, subject to the 
constraints $P(0) = \mathbf{u}_0$, $\partial_1 P(0) = \mathbf{v}_1$, and $\partial_2 P(0) = \mathbf{v}_2$,
and define $p \colon D \to \mathbb{C}^4$ by 
\[
p(z_1, z_2) = \pi P(z_1, z_2),  \quad \quad \quad \quad (z_1, z_2) \in D.
\] 
Then $p$ parameterizes a local stable manifold attached to 
$\mathbf{x}_0$ for the CRFBP.  }
\end{lemma}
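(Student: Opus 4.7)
The plan is to reduce everything to the two identities we already have in hand: Lemma \ref{lem:imagePinS}, which tells us that $\mbox{image}(P) \subset \mathcal{S}$, and Lemma \ref{lem:theVectorFields}, which says $\pi \circ F \circ R = f$. Together these will let us project the invariance equation for the seven dimensional polynomial problem down to the invariance equation for the four dimensional CRFBP.

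First I would verify the first order constraints for $p = \pi \circ P$. Since $\pi$ is a constant linear map, it commutes with the differential, so $p(0) = \pi P(0) = \pi \mathbf{u}_0 = \pi R(\mathbf{x}_0) = \mathbf{x}_0$, using the identity $\pi R = \mbox{Id}$ recorded just after the definition of $R$. For the tangent data, note that $\partial_j p(0) = \pi \partial_j P(0) = \pi \mathbf{v}_j = \pi DR(\mathbf{x}_0) \xi_j$, and from the explicit form of $DR$ in Equation \eqref{eq:DiffR} we have $\pi DR(\mathbf{x}_0) = \mbox{Id}$, so $\partial_j p(0) = \xi_j$ for $j = 1,2$, which are exactly the first order constraints of Equation \eqref{eq:firstOrder1} for the CRFBP.

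Next I would push the invariance equation through. Apply $\pi$ to both sides of Equation \eqref{eq:invEqnFlows1} for $P$, using the linearity of $\pi$ on the left to obtain
\[
\lambda_1 z_1 \partial_{z_1} p(z_1,z_2) + \lambda_2 z_2 \partial_{z_2} p(z_1,z_2) = \pi F(P(z_1,z_2)).
\]
By Lemma \ref{lem:imagePinS}, $P(z_1,z_2) \in \mathcal{S}$ for every $(z_1,z_2) \in D$, so the identity $\mathbf{u} = R(\pi \mathbf{u})$ from Equation \eqref{eq:u_is_Rpi_u} gives $P(z_1,z_2) = R(p(z_1,z_2))$. Feeding this into the right hand side and invoking Lemma \ref{lem:theVectorFields},
\[
\pi F(P(z_1,z_2)) = \pi F(R(p(z_1,z_2))) = f(p(z_1,z_2)),
\]
so $p$ satisfies the stable manifold invariance Equation \eqref{eq:invEqnFlows1} for $f$ with the correct first order data. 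The flow conjugacy of Lemma \ref{lem:flowConj} then applies verbatim to $p$, and the computation in Equation \eqref{eq:parmLimit} shows $p(z_1,z_2) \in W^s(\mathbf{x}_0)$ for all $(z_1,z_2) \in D$, so $p$ parameterizes a local stable manifold of $\mathbf{x}_0$ as claimed.

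The only real obstacle is checking the non-collision hypothesis needed to apply Lemma \ref{lem:imagePinS}; this should be handled by shrinking $D$ (equivalently, by scaling the eigenvectors as in Remark \ref{rem:eigScaling}) so that $\mbox{image}(P)$ sits in a small neighborhood of $\mathbf{u}_0$ where $P_1 \neq x_j$ and $P_3 \neq y_j$ for $j=1,2,3$, which is automatic since $\mathbf{x}_0$ is not one of the primaries. Everything else is just projecting the identities we have already established.
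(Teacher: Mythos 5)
Your proposal is correct and follows essentially the same route as the paper's proof: project the invariance equation through $\pi$, use Lemma \ref{lem:imagePinS} together with the identity \eqref{eq:u_is_Rpi_u} to write $P = R \circ p$, invoke Lemma \ref{lem:theVectorFields} to identify $\pi F(P(z_1,z_2))$ with $f(p(z_1,z_2))$, and verify the first order constraints via $\pi R = \mbox{Id}$ and $\pi \mathbf{v}_j = \mathbf{\xi}_j$. Your explicit remark about the non-collision hypothesis of Lemma \ref{lem:imagePinS} (handled by shrinking $D$ or rescaling the eigenvectors) addresses a point the paper leaves implicit, but it does not alter the argument.
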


\begin{proof}
The idea is to use the parameterization method, but applied to the 
vector field $f$.  More precisely, let us show that $p$
is a solution of the invariance equation \eqref{eq:invEqnFlows1} for the CRFBP
subject to the appropriate first order constraints.  

Let $(z_1, z_2) \in D$, so that by Lemma \ref{lem:imagePinS} we have  
\[
P(z_1, z_2) \in  \mathcal{S} := \mbox{image}(R).
\]
Using this inclusion, and recalling the identity of Equation \eqref{eq:u_is_Rpi_u},
we have that 
\[
P(z_1, z_2) = R(\pi P(z_1, z_2)) = R(p(z_1, z_2)).
\]
Combining with Lemma \ref{lem:theVectorFields} gives
\begin{align} 
\pi F(P(z_1, z_2)) & =    \pi F(R(p(z_1, z_2)))   \nonumber \\
&= f(p(z_1, z_2)). \label{eq:parmProof_id}
\end{align}

Now, since $P$ satisfies Equation \eqref{eq:invEqnFlows1} for $F$, and since the 
projection operators are linear and commute with derivatives, we have that 
\begin{align*}
\lambda_1 z_1 \frac{\partial}{\partial z_1} p(z_1, z_2) + \lambda_2 z_2 \frac{\partial}{\partial z_2} p(z_1, z_2) &= 
\lambda_1 z_1 \frac{\partial}{\partial z_1} \pi P(z_1, z_2) +
\lambda_2 z_2 \frac{\partial}{\partial z_2} \pi P(z_1, z_2) \\
&= \lambda_1 z_1\pi \left( \frac{\partial}{\partial z_1}  P(z_1, z_2) \right) +
\lambda_2 z_2 \pi \left(\frac{\partial}{\partial z_2}  P(z_1, z_2) \right) \\
&= \pi \left(\lambda_1 z_1 \frac{\partial}{\partial z_1} P(z_1, z_2) +
\lambda_2 z_2 \frac{\partial}{\partial z_2}  P(z_1, z_2) \right) \\
&= \pi F[P(z_1, z_2)] \\
&= f(p(z_1, z_2)),
 \end{align*}
where we used Equation \eqref{eq:parmProof_id} to pass to the last line. 
Since $(z_1, z_2) \in D$ were arbitrary, this 
shows that $p$ satisfies Equation \eqref{eq:invEqnFlows1} on $D$.
 
Now we check that 
\[
p(0,0) = \pi P(0,0)  = \pi \mathbf{u}_0 = \pi R(\mathbf{x}_0) = \mathbf{x}_0,
\]
that 
\[
\frac{\partial}{\partial z_1} p(0,0) = \frac{\partial}{\partial z_1} \pi P(0,0) 
=  \pi \frac{\partial}{\partial z_1} P (0,0) =  \pi \mathbf{v}_1 = \mathbf{\xi}_1, 
\]
and that 
\[
\frac{\partial}{\partial z_2} p(0,0) = \frac{\partial}{\partial z_2} \pi P (0,0) 
= \pi \frac{\partial}{\partial z_2} P (0,0) =  \pi \mathbf{v}_2 = \mathbf{\xi}_2, 
\]
so that -- as claimed --$p$ satisfies the 
necessary first order constraints for the parameterization method
applied to $f$, and the proof is complete.  
\end{proof}

The proof of Lemma \ref{lem:polyGivesCRFBP} gives that $p = \pi P  \colon D \to \mathbb{C}^4$ 
satisfies the parameterization method for $f$, and it
follows that $p$ conjugates the CRFBP flow to the linear saddle-focus flow near the equilibrium.  
Moreover if $\lambda_1$ and $\lambda_2$ are complex conjugate eigenvalues, 
recall that by choosing $v_1$ and $v_2$ complex conjugate eigenvectors we
have that the image of $P(s_1 + i s_2, s_1 - i s_2)$ is real -- that is 
$P$ parameterizes the real stable manifold of $\mathbf{u}_0$ for $F$.
It follows that the image of $p(s_1 + i s_2, s_1 - i s_2) = \pi P(s_1 + i s_2, s_1 - i s_2)$
is also real, so that this is the parameterization of the real stable manifold of the CRFBP.
Of course analogous comments apply to the parameterization of the unstable manifold.

\bibliographystyle{unsrt}
\bibliography{papers}

\end{document}